\newtheorem{thm}{Theorem}[section]
\newtheorem{prop}[thm]{Proposition}
\newtheorem{lemma}[thm]{Lemma}
\newtheorem{cor}[thm]{Corollary}
\newtheorem{defn}[thm]{Definition}
\newtheorem{ex}[thm]{Example}
\newtheorem{rmk}[thm]{Remark}
\newtheorem{question}[thm]{Question}
\numberwithin{equation}{section}
\newcommand{\C}{\mathbb C}
\newcommand{\R}{\mathbb R}
\newcommand{\Z}{\mathbb Z}
\def \mc{\mathcal}
\begin{document}

\baselineskip=15pt

\title[Seshadri constants of equivariant vector bundles]{Seshadri constants of equivariant vector bundles on toric varieties}

\author[J. Dasgupta]{Jyoti Dasgupta}

\address{Indian Institute of Science Education and Research, Pune, 
	Dr. Homi Bhabha Road,
	Pashan, Pune 411 008, India}

\email{jdasgupta.maths@gmail.com}

\author[B. Khan]{Bivas Khan}

\address{Indian Institute of Science Education and Research, Pune, 
	Dr. Homi Bhabha Road,
	Pashan, Pune 411 008, India}

\email{bivaskhan10@gmail.com}

\author[Aditya Subramaniam]{Aditya Subramaniam}

\address{Chennai Mathematical Institute, H1 SIPCOT IT Park, Siruseri, Kelambakkam 603103, 
	India}
\email{adityas@cmi.ac.in}

\subjclass[2010]{14C20, 14M25, 14J60}

\keywords{Bott tower, Mori cone, vector bundles, Seshadri constant}

\date{}

\maketitle

\begin{abstract}
	 %We compute Seshadri constants of an equivariant nef vector bundle on a projective space, satisfying certain conditions. We consider Bott towers of height 2 and 3, which are iterated ${\mathbb P}^1$--bundles. We compute Seshadri constants of an equivariant nef vector bundle over a Bott tower, with some assumptions on the restriction of the bundle to invariant curves.\\
	 
	  We compute Seshadri constants of a torus equivariant nef vector bundle on a projective space satisfying certain conditions. As an application, we compute Seshadri constants of tangent bundles on projective spaces. We also consider equivariant nef vector bundles on Bott towers of height 2 (i.e. Hirzebruch surfaces) and Bott towers of height 3 respectively. Assuming some conditions on the minimal slope of the restrictions of these bundles to invariant curves, we give precise values of Seshadri constant at an arbitrary point. We also give several examples illustrating our results.
\end{abstract}

\section{Introduction}

%\TR{Seshadri constant of line bundles expresses their local positivity.} The foundational idea behind Seshadri constant is an ampleness criterion of line bundle given by Seshadri \cite{Seshadri}. Seshadri constant for line bundles was introduced by Demailly\cite{dem}, whose idea was to quantify how much of positivity of an ample line bundle can be localized at a given point of a variety. They have become an interesting invariant. See \cite{B} for a survey of results for Seshadri constant of line bundles.  %\textcolor{red}{Most of the existing work on Seshadri constants is focussed on smooth surfaces. Some results on Seshadri constants for line bundles are known for higher dimensional varieties, for example on toric varieties and Fano varieties (see []).}  

Seshadri constants quantify local positivity of an ample line bundle on a projective variety. Seshadri constants for line bundles were introduced by Demailly \cite{Dem}. The foundational idea behind Seshadri constants is an ampleness criterion for line bundles given by Seshadri \cite[Theorem 7.1]{Har}.  Seshadri constants have turned out to be an interesting invariant. In general, Seshadri constants are hard to compute and a lot of research is aimed at finding estimates. Most of the existing work on Seshadri constants has been in the case
of surfaces (for example, see \cite{Sy,Ga,HM}). There are very few results known over higher dimensional varieties, such as  abelian varieties (for example, see \cite{Na, La, Ba,
	Deb}), toric varieties (for example, see \cite{DiRocco, HMP, It1,
	It2}), Fano varieties (for example, see \cite{BS,
	LZ}), and Grassmann bundles over curves (\cite{BHNN}). See \cite{B} for a survey of results for Seshadri constants for line bundles.

Seshadri constants for vector bundles were first defined by Hacon \cite{Hacon} (see Definition \ref{sch}). These constants also appeared in  the work of Beltrametti–Schneider–Sommese \cite{BSS, BSS1}. Recently, Fulger and Murayama \cite{FM} have introduced Seshadri constants in a relative setting generalizing both Demailly and Hacon's definitions (see Definition \ref{fulm}). %We use their description for the purpose of this paper (see Definition \ref{fulm}).
Seshadri constants for ample vector bundles on smooth projective curves are computed in \cite{Hacon} and for nef vector bundles in \cite{FM}.  The Seshadri constant of a discriminant zero semistable vector bundle can be computed from the determinant line bundle (see \cite[Lemma 3.27]{FM}). Seshadri constants for direct sum of vector bundles are the minimum of Seshadri constants of the component bundles (see \cite[Lemma 3.31]{FM}). Recently, Seshadri constants of ample vector bundles on some surfaces have been studied in \cite{MR}.

The study of Seshadri constants for equivariant vector bundles was initiated in \cite{HMP}. An equivariant vector bundle on a toric variety is a vector bundle together with a lift of the action of the torus.  One advantage of working with an equivariant vector bundle is that the Mori cone of the projectivized bundle is rational polyhedral (see Section \ref{mcot}). Also, one can find restriction of equivariant vector bundles to invariant curves using Klyachko's classification theorem (see Section \ref{subsec5}). In \cite[Proposition 3.2]{HMP}, the authors have computed Seshadri constants for torus fixed points of a toric variety. Our aim is to compute Seshadri constant at arbitrary point.  

The purpose of this paper is twofold. First we compute Seshadri constants of a torus equivariant nef vector bundle on a projective space satisfying certain conditions. This in particular includes the class of equivariant nef uniform bundles. Secondly, we consider equivariant nef vector bundles on a particular class of nonsingular projective complex toric varieties, namely, \textit{Bott towers}. 
For an integer $n \, \ge \, 0$, a \textit{Bott tower of height $n$}
\begin{equation}\label{bn}
	X_n \,\longrightarrow\, X_{n-1}\,\longrightarrow\, \ldots \,\longrightarrow\, X_2\,
	\longrightarrow\, X_1 \,\longrightarrow\, X_0\,=\,\{\text{point} \}
\end{equation} 
is defined inductively as an iterated ${\mathbb P}^1$--bundle so that at the 
$k$-th stage of the tower,  $X_k$ is of the form
$\mathbb{P}(\mathcal{O}_{X_{k-1}}  \oplus \mathcal{L}_{k-1}$) 
for a line bundle $\mathcal{L}_{k-1}$ over $X_{k-1}$.
So $X_1$ is isomorphic to \(\mathbb{P}^1\), $X_2$  
is a Hirzebruch surface and so on. 
Bott towers were shown to be degenerations of Bott-Samelson varieties by Grossberg and Karshon (see \cite{GK}). We call any stage of the tower \eqref{bn} also as a Bott tower. The original goal was to compute Seshadri constants for equivariant nef vector bundles over Bott towers, generalizing the results for line bundles obtained in \cite{Sc_bott}. However, it is cumbersome to do certain computations in higher dimensions. We compute Seshadri constants of equivariant nef vector bundles under some hypothesis on their restriction to invariant curves, and we restrict our attention to Bott towers of height 2 and 3. We also provide several examples to illustrate our results.

In Section \ref{prelim}, we set the notation and include some preliminary results for reference. We briefly describe the Mori cone construction for projective bundles on toric varieties in Section \ref{mcot}. 
In Section \ref{scpn}, we compute Seshadri constants of nef equivariant bundles on projective spaces satisfying certain conditions. In Section \ref{hirz} and Section \ref{X3}, we consider equivariant nef vector bundles (with certain assumptions) on Bott towers of height 2 and 3 respectively, and compute the Seshadri constants.

\section*{Acknowledgement} 
We would like to thank Krishna Hanumanthu for suggesting this problem, many helpful discussions and his comments on the manuscript. The second author is supported by NBHM postdoctoral fellowship from DAE. The third author is partially supported by a grant from Infosys Foundation.
\section{Preliminaries}\label{prelim}

\subsection{Seshadri constants for vector bundles}
Let $X$ be a nonsingular complex projective variety  and  $\mathcal{E}$ be a nef vector bundle on $X$.  Let $\mathbb{P}(\mathcal{E} )$ be the projective bundle associated to $\mathcal{E}$ and let \begin{equation*}
	\pi:\mathbb{P}(\mathcal{E} )\rightarrow X
\end{equation*}
be the natural projection. Fix a closed point $x \in X.$ Let $\psi:Bl_x(X) \to X$ be the blow up at $x \in X$ with exceptional divisor $E$. We then consider the commutative square Figure \ref{figblowup}
\begin{figure}[h] 
	\[ \xymatrix{
		Y' \ar[r]^{\psi'}\ar[d]_{\pi'} & \mathbb{P}(\mathcal{E} )\ar[d]^{\pi}\\
		Bl_x(X)  \ar[r]^{\psi}       & X 
	} \]
\caption{}
\label{figblowup}
\end{figure}
where $Y_{x}:=\pi^{-1}(x)$ and $Y':=Bl_{Y_x}(\mathbb{P}(\mathcal{E} ))=\mathbb{P}(\psi ^*\mathcal{E})$.

Let $\xi:=\mathcal{O}_{\mathbb{P}(\mathcal{E} )}(1)$ be the tautological line bundle on $\mathbb{P}(\mathcal{E} )$. Note that the vector bundle $\mathcal{E}$ on $X$ is said to be nef (respectively, ample) if  $\xi$ is nef (respectively, ample) on  $\mathbb{P}(\mathcal{E} )$.
\begin{defn}\label{sch}
The Seshadri constant of $\mathcal{E}$ at $x\in X$ is defined as  
\begin{align*}
	\varepsilon(X, \mathcal{E},x):=\text{sup}\,\Bigl\{\lambda \in \mathbb{R}_{\geq 0} \mid \psi'^*\xi-\lambda \, \pi'^*E \rm\hspace{1mm} is \hspace{1mm} nef \hspace{1mm} on \hspace{1mm} Y'\Bigr\}.	
\end{align*}
\end{defn}
This definition was introduced by Hacon in \cite{Hacon} for ample vector bundles.

There is another formulation of $\varepsilon(X, \mathcal{E},x)$ due to Fulger and Murayama \cite{FM}, which we use in this article.

Let $\mathcal{C}_{\mathcal{E}, x}$ denote the set of irreducible curves on $\mathbb{P}(\mathcal{E} )$ that meet $Y_{x}=\pi^{-1}(x)$, but are not contained in the support of $Y_{x}$. We note that the curves in $\mathcal{C}_{\mathcal{E}, x}$ are precisely the irreducible curves $C$ on $\mathbb{P}(\mathcal{E} )$ such that ${\rm mult}_{x}\pi_*C>0$. Here  ${\rm mult}_{x}\pi_*C$ denotes the multiplicity of $\pi_*C$ at $x$.
\begin{defn}\label{fulm}
The Seshadri constant of $\mathcal{E}$ at $x$ is defined to be 
\begin{equation*}
	\varepsilon(X, \mathcal{E},x):=\, \inf\limits_{\substack{C \in \mathcal{C}_{\mathcal{E}, x}}} \frac{\xi\cdot
		C}{{\rm mult}_{x}\pi_*C} .
\end{equation*}
\end{defn}

In \cite[Remark 3.10]{FM}, it was shown that Definition \ref{sch} and Definition \ref{fulm} are equivalent formulations for Seshadri constants of nef vector bundles on complex projective varieties. For simplicity, we sometimes denote the Seshadri constant $\varepsilon(X, \mathcal{E},x)$ simply by \(\varepsilon( \mathcal{E},x)\) when the variety \(X\) is clear from the context. From Definition \ref{fulm}, one can deduce that 
\begin{equation}\label{FMcorp3.21}
	\varepsilon(\mathcal{E},x) =\, \inf\limits_{\substack{x \in C \subset X}}  	\varepsilon(\mathcal{E}|_C,x),
\end{equation}
(see proof of \cite[Corollary 3.21]{FM}).

%We note that this is precisely the definition of $\varepsilon(\mathcal{E},x)$ for an ample vector bundle $\mathcal{E}$ on $X$ as introduced in \cite{Hacon}.

The following result is a key ingredient in our computations.

\noindent
Fix an ample divisor \(H\) on a nonsingular projective variety \(X\) of dimension \(n\). For a torsion free sheaf $\mathcal{E}$ over \(X\), we have \(\text{deg } \mathcal{E}=c_{1}(\mathcal{E}) \cdot H^{n-1}\) and slope $\mu(\mathcal{E})=\frac{\text{deg } \mathcal{E}}{\text{rank}(\mathcal{E})}$.

\begin{comment}
We consider stability in the sense of Mumford-Takemoto, which is also known as $\mu$-stability. A subsheaf $\mathcal{F}$ of $\mathcal{E}$ is called proper if \(0 < \text{rank}(\mathcal{F})< \text{rank}(\mathcal{E}) \). A vector bundle $\mathcal{E}$ over \(X\) is said to be (semi)stable with respect to \(H\) if for any proper subsheaf $\mathcal{F}$ of $\mathcal{E}$, we have $\mu(\mathcal{F}) (\leq)< \mu(\mathcal{E})$. We say $\mathcal{E}$ is unstable if it is not semistable.

\begin{prop}\cite[Lemma 3.27]{FM}
Let $\mathcal{E}$ be a nef semistable vector bundle of rank $r$ on a nonsingular complex projective variety $X$ with discriminant $\Delta(\mathcal{E}):=2rc_2(\mathcal{E})-(r-1)c_1^2(\mathcal{E})=0$. Then, for all $x \in X$, we have 
\begin{equation*}
\varepsilon(\mathcal{E},x)= \frac{\varepsilon(\text{det} \,\mathcal{E}, \, x)}{r}.	
\end{equation*}
\end{prop}

\end{comment}
\begin{prop}$($Vector bundles on curves$)$ \cite[Corollary 3.21]{FM}\label{FMcor3.21}
If $\mathcal{E}$ is a nef vector bundle on a nonsingular complex projective curve $X$, then 
$$\varepsilon(\mathcal{E},x)=\mu_{min}(\mathcal{E})$$ for all $x \in X$, where $\mu_{min}(\mathcal{E})$ denotes the smallest slope of any quotient of \(\mathcal{E}\). 
\end{prop}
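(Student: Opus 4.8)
The plan is to work with the Fulger--Murayama formulation (Definition \ref{fulm}) and to exploit the simple geometry of $\mathbb{P}(\mathcal{E})$ over a curve. Write $r=\mathrm{rank}(\mathcal{E})$, $\pi:\mathbb{P}(\mathcal{E})\to X$, $\xi=\mathcal{O}_{\mathbb{P}(\mathcal{E})}(1)$, and let $F$ be the numerical class of a fibre of $\pi$. First I would observe that every $C\in\mathcal{C}_{\mathcal{E},x}$ dominates $X$, so $\pi|_C:C\to X$ is finite of some degree $d\ge 1$ with $\pi_*C=d\,[X]$; since $x$ is a smooth point of $X$ we get $\mathrm{mult}_x\pi_*C=d=F\cdot C$. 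Thus Definition \ref{fulm} reads $\varepsilon(\mathcal{E},x)=\inf_C (\xi\cdot C)/(F\cdot C)$, which, via the blow-up description of Definition \ref{sch} together with $Bl_x(X)\cong X$ (so that $\pi'^*E$ is just the fibre class $F$), equals the nef threshold $\sup\{\lambda\ge 0:\xi-\lambda F \text{ is nef}\}$; in particular the value is manifestly independent of $x$. It then remains to prove the two inequalities $\varepsilon(\mathcal{E},x)\ge\mu_{min}(\mathcal{E})$ and $\varepsilon(\mathcal{E},x)\le\mu_{min}(\mathcal{E})$.

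For the lower bound, fix $C\in\mathcal{C}_{\mathcal{E},x}$ of degree $d$ and let $\nu:\widetilde{C}\to X$ be the normalisation $\widetilde{C}\to C$ composed with $\pi|_C$, a finite morphism of smooth curves of degree $d$. The induced lift $\widetilde{C}\to\mathbb{P}(\mathcal{E})$ corresponds to a quotient line bundle $\nu^*\mathcal{E}\twoheadrightarrow M$ with $\deg M=\xi\cdot C$, and since $M$ is a quotient we have $\deg M\ge\mu_{min}(\nu^*\mathcal{E})$. The key input is that, in characteristic $0$, the pullback of a semistable bundle under a finite morphism of smooth curves is again semistable, so the Harder--Narasimhan filtration of $\mathcal{E}$ pulls back to that of $\nu^*\mathcal{E}$ and therefore $\mu_{min}(\nu^*\mathcal{E})=d\cdot\mu_{min}(\mathcal{E})$. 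Consequently $(\xi\cdot C)/d=(\deg M)/d\ge\mu_{min}(\mathcal{E})$, and passing to the infimum gives $\varepsilon(\mathcal{E},x)\ge\mu_{min}(\mathcal{E})$. I expect this to be the main obstacle: the easy direction only yields $\mu_{min}(\nu^*\mathcal{E})\le d\cdot\mu_{min}(\mathcal{E})$ (test against $\nu^*$ of the minimal destabilising quotient), and the reverse inequality genuinely relies on the preservation of semistability under finite pullback.

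For the upper bound, let $\mathcal{E}\twoheadrightarrow\mathcal{Q}$ be the minimal destabilising quotient, i.e.\ the last graded piece of the Harder--Narasimhan filtration; it is semistable of rank $s$ and slope $\mu(\mathcal{Q})=\mu_{min}(\mathcal{E})$, and being a quotient of a nef bundle it is nef. The closed embedding $\mathbb{P}(\mathcal{Q})\hookrightarrow\mathbb{P}(\mathcal{E})$ over $X$ satisfies $\xi|_{\mathbb{P}(\mathcal{Q})}=\mathcal{O}_{\mathbb{P}(\mathcal{Q})}(1)$, so it suffices to bound the nef threshold of $\mathcal{Q}$. Using the Grothendieck relation and the vanishing of $c_i(\mathcal{Q})$ for $i\ge 2$ on the curve $X$, one computes $\xi_{\mathcal{Q}}^{\,s}=\deg\mathcal{Q}$, $\xi_{\mathcal{Q}}^{\,s-1}\cdot F=1$ and $F^2=0$, whence $(\xi_{\mathcal{Q}}-\lambda F)^{s}=\deg\mathcal{Q}-s\lambda$. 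For $\lambda>\mu(\mathcal{Q})$ this top self-intersection is negative, so $\xi_{\mathcal{Q}}-\lambda F$ cannot be nef, as a nef class has non-negative top self-intersection by Kleiman's theorem. Hence $\varepsilon(\mathcal{Q},x)\le\mu(\mathcal{Q})$. Since curves in $\mathbb{P}(\mathcal{Q})$ meeting $\pi^{-1}(x)$ lie in $\mathcal{C}_{\mathcal{E},x}$ with unchanged $\xi$-degree and multiplicity, we get $\varepsilon(\mathcal{E},x)\le\varepsilon(\mathcal{Q},x)\le\mu(\mathcal{Q})=\mu_{min}(\mathcal{E})$. Combining the two inequalities yields $\varepsilon(\mathcal{E},x)=\mu_{min}(\mathcal{E})$ for every $x\in X$.
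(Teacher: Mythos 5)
Your proof is correct, but note that the paper itself does not prove this proposition at all: it is imported wholesale from \cite[Corollary 3.21]{FM}, and the only related machinery that actually appears in the paper is the nef-cone description \eqref{restn}, quoted from \cite[Lemma 2.1]{Fulger}. So the fair comparison is with that toolkit. Your first step (every $C\in\mathcal{C}_{\mathcal{E},x}$ dominates $X$, so ${\rm mult}_{x}\pi_*C=F\cdot C$ and the Seshadri constant is the nef threshold of $\xi$ against the fibre class, hence independent of $x$) is the observation that makes the curve case degenerate, and it is implicit in how the paper uses \eqref{restn}. Your lower bound---normalize $C$, read the lift as a line-bundle quotient $\nu^*\mathcal{E}\twoheadrightarrow M$ with $\deg M=\xi\cdot C$, and invoke preservation of semistability under finite pullback in characteristic zero to get $\mu_{min}(\nu^*\mathcal{E})=d\,\mu_{min}(\mathcal{E})$---is precisely the Miyaoka-type argument on which Fulger's computation of $\operatorname{Nef}(\mathbb{P}(\mathcal{E}))$ rests, and you correctly flag it as the one genuinely nontrivial input. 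Your upper bound via the minimal Harder--Narasimhan quotient $\mathcal{Q}$, the embedding $\mathbb{P}(\mathcal{Q})\hookrightarrow\mathbb{P}(\mathcal{E})$, and the computation $(\xi_{\mathcal{Q}}-\lambda F)^{s}=\deg\mathcal{Q}-s\lambda<0$ for $\lambda>\mu(\mathcal{Q})$ is also sound: a non-nef class is negative on some irreducible curve, such a curve cannot lie in a fibre (where $\xi_{\mathcal{Q}}$ is positive), hence it lies in $\mathcal{C}_{\mathcal{E},x}$, giving $\varepsilon(\mathcal{E},x)\le\varepsilon(\mathcal{Q},x)\le\mu(\mathcal{Q})=\mu_{min}(\mathcal{E})$; the switch between Definition \ref{sch} and Definition \ref{fulm} that you make here is licensed by the equivalence the paper records from \cite[Remark 3.10]{FM}. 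In short, what the paper's citation buys is brevity (given \eqref{restn}, the statement is immediate, since blowing up a point of a curve is trivial and the exceptional divisor pulls back to the fibre class), while your route buys a self-contained argument that makes explicit exactly where semistability and characteristic zero enter.
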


\subsection{Restricting equivariant vector bundles to invariant curves}\label{subsec5}

We recall the recipe of restricting an equivariant vector bundle to an invariant curve from \cite[Section 5]{HMP}, \cite[Section 5]{DJS}. Let \(X=X(\Delta)\) be a nonsingular complete complex toric variety under the action of the torus \(T\), determined by the fan $\Delta$ in the lattice \(N \cong \Z^n \). The dual lattice is \(M:=\text{Hom}_{\Z}(N, \Z)\). Let $\Delta(k)$ denote the set of $k$-dimensional cones  of $\Delta$, and let $\sigma(k)$ denote the set of $k$-dimensional faces (subcones) of a cone $\sigma$ in $\Delta$. Let $\mathcal{E}$ be an equivariant vector bundle of rank \(r\) on \(X\). By a celebrated classification theorem of Klyachko \cite[Theorem 2.2.1]{Kly}, $\mathcal{E}$ corresponds to decreasing filtrations \((E, \{E^{\rho}(i)\}_{\rho \in \Delta(1), \, i \in \Z})\) on a vector space \(E\), which satisfy the following compatibility condition:
\begin{equation}\label{KCC}
	\begin{split}
		& \text{for each } \sigma \in \Delta(n), \text{ there exists a decomposition } \\
		&E= \bigoplus_{u \in \mathbf{u}(\sigma)} L^{\sigma}_u \text{ such that }  E^{\rho}(i)=\sum\limits_{\langle u, v_{\rho}  \rangle \geq i} L^{\sigma}_u \text{ for all } \rho \in \sigma(1).
	\end{split}
\end{equation}
Here \(v_{\rho}\) denotes the primitive generator of a ray $\rho \in \Delta(1)$. Since $\sigma$ is a maximal cone, the multiset $\mathbf{u}(\sigma) \subset M$ is uniquely determined by the vector bundle $\mathcal{E}$ and the cone $\sigma$ (see \cite[Corollary 2.3]{payne}). For each $\sigma \in \Delta(n)$, the multiset $\mathbf{u}(\sigma)$ will be called the associated characters of the equivariant vector bundle $\mathcal{E}$. These multisets play a crucial role in determining the restriction of the equivariant vector bundle $\mathcal{E}$ to invariant curves.

Consider an invariant curve \(C\) in \(X\) corresponding to the cone $\tau \in \Delta(n-1)$. Since \(X\) is complete, there are two maximal cones $\sigma$ and $\sigma'$ in $\Delta(n)$ that contain $\tau$. Also, note that \(C \cong \mathbb{P}^1\). Let the multisets corresponding to these maximal cones be given by 
\begin{equation*}
	\mathbf{u}(\sigma)=\{\mathbf{u}_1, \ldots, \mathbf{u}_r\} \text{ and } \mathbf{u}(\sigma')=\{\mathbf{u}'_1, \ldots, \mathbf{u}'_r\} \text{ respectively}.
\end{equation*} 

Then by \cite[Corollary 5.5, Corollary 5.10]{HMP}, the restriction $\mathcal{E}|_C$ splits equivariantly into a direct sum of line bundles
\begin{equation*}
	\mathcal{L}_{\mathbf{u}_1, \mathbf{u}'_1}|_C \oplus \ldots \oplus \mathcal{L}_{\mathbf{u}_r, \mathbf{u}'_r}|_C.
\end{equation*}
Here \(\mathcal{L}_{\mathbf{u}_i, \mathbf{u}'_i} \) denotes a line bundle over \(U_{\sigma} \cup U_{\sigma'}\) for \(i=1, \ldots, r\). Moreover, each individual summand is given by \[\mathcal{L}_{\mathbf{u}_i, \mathbf{u}'_i}|_C \cong \mathcal{O}_{\mathbb{P}^1}(a),\] where \(\mathbf{u}_i-\mathbf{u}'_i\) is \(a\) times the primitive generator of $\tau^{\perp}$ that is positive on $\sigma$.

\subsection{Bott towers}\label{bott}

In this section we briefly recall the construction of Bott towers and a few results about them. For details see \cite{BP}.

\subsubsection{Fan structure of a Bott tower}

Let $T \,\cong\,
\left(\C^*\right)^n$ be an algebraic torus. 
Define its character lattice $M\,:=\,\text{Hom}(T, \,\C^*) \,\cong\, \Z^n$ and
the dual lattice $N\,:=\,\text{Hom}_{\Z}(M, \Z)$. Let $\Delta_n$ be a
fan in $N_{\R}\,:=\,N \otimes_{\Z} \R$ which defines the toric variety
$X_n$ under the action of the torus 
$T$. The set of edges of $\Delta_n$ will be denoted by
$\Delta_n(1)$. Let $e_1,\,\ldots,\, e_n$ be the standard basis for $\mathbb{R}^n$. 
Consider the following vectors:
\begin{equation}\label{ci}
	\begin{split}
		& v_1\,=\,e_1,\, \ldots,\, v_n\,=\,e_n, \\
		& v_{n+1}\,=\,-e_1+c_{1, 2} e_2 + \ldots + c_{1, n} e_n,\\
		& \hspace{1 cm} \vdots\\
		& v_{n+i}\,=\,-e_i+c_{i, i+1} e_{i+1} + \ldots + c_{i, n} e_n, 1 \leq i <n,\\
		&v_{2n}\,=\,-e_n.
	\end{split}
\end{equation}
The fan $\Delta_n$ of $X_n$ is complete, and it consists of these $2n$
edges and $2^n$ maximal cones of dimension $n$ generated by these 
edges such that no cone contains both the edges $v_i$ and $v_{n+i}$
for $i\,=\,1,\, \cdots,\, n$.  
It follows that any \(k\)-th stage Bott tower arises from a collection
of integers \(\{c_{i,j} \}_{1 \leq i< j \leq n}\) as in \eqref{ci}. These integers are
called the \textit{Bott numbers} of the given Bott tower. 
In this paper we will restrict our attention to the case when the
Bott numbers $\{c_{i, j}\}_{\{1\leq i < j \leq n \}}$ are all positive
integers.

%\subsubsection{Picard group of a Bott tower}

Let $D_i$ denote the invariant prime divisor corresponding to the 
edge $v_{n+i}$, and let $D'_i$ denote the invariant prime divisor corresponding to the edge $v_i$
for \(i\,=\,1,\, \ldots,\, n\). We have the following relations:
\begin{equation}\label{linequiv}
	D'_1 \sim_{\text{lin}} D_1,\,
	D'_i \sim_{\text{lin}} D_{i}-c_{1, i}D_{1}-\ldots-c_{i-1, i}D_{i-1}\ \text{ for } i\,=\,2,\, \cdots,\, n.
\end{equation}
The Picard group of the Bott tower is $\text{Pic}(X_n)\,=\,\Z
D_{1} \oplus \,\ldots\, \oplus \Z D_{n}$.

\subsubsection{Quotient construction of a Bott tower} \label{qc}

Let us recall the quotient construction of Bott tower (see \cite[Theorem 7.8.7]{BP} for details). The Bott tower \(X_n\) can be obtained as the 
quotient of
\[\{(z_1,w_1, \cdots, z_n, w_n) \,\in\, \mathbb{C}^{2n} \,\mid\
|z_i|^2+|w_i|^2 \neq 0,\ 1 \,\leq\, i \,\leq\, n\} \,\cong\, (\mathbb{C}^2 \setminus {0})^n\]
by the action of \((\mathbb{C}^*)^n\), where the action is given by 
\begin{equation*}
	\begin{split}
		(t_1,\, \cdots,\, t_n) \, \cdot \, &(z_1, \, w_1, \, \cdots, \, z_n, \, w_n) \,\\
		& =\,
	(t_1 z_1,\, t_1w_1,\, t_1^{-c_{1,2}} t_2z_2,\, t_2w_2, \,\cdots,\, t_1^{-c_{1,n}} t_2^{-c_{2,n}}
	\cdots t_{n-1}^{c_{n-1, n}} t_nz_n,\,t_nw_n ).	
	\end{split}
\end{equation*}
A point of \(X_n\) is denoted by the equivalence class \([z_1:w_1: \ldots: z_n: w_n]\).
Note that \(D'_i\) (respectively, \(D_i\)) is just the vanishing locus of the coordinate \(z_i\) (respectively,
\(w_i\)), i.e., \(D'_i\,=\,\mathbb{V}(z_i)\) (respectively, \(D_i\,=\,\mathbb{V}(w_i)\)) for \(1 \,\leq\, i \,\leq\,
n\) (see \cite[Example 5.2.5]{Cox}). 

For each \(1 \,\leq\, i \,\leq\, n\), there is a map 
\begin{equation*}
	X_i \,\longrightarrow\, X_{i-1},\text{ by sending } [z_1:w_1: \ldots: z_i:w_i] \,\longmapsto\,
	[z_1:w_1: \ldots: z_{i-1}:w_{i-1}]
\end{equation*}
together with a section given by
\begin{equation*}\label{qt_section}
	X_{i-1}\,\longrightarrow\, X_i,\ \ [z_1:w_1: \ldots: z_{i-1}:w_{i-1}] \,\longmapsto\,
	[z_1:w_1: \ldots: z_{i-1}:w_{i-1}:0:1]\, .
\end{equation*}

\subsubsection{Mori cone of a Bott tower }
We recall the description of the Mori cone of a Bott tower from \cite[Subsection 2.4]{Sc_bott}. Fix a point $x\,\in \,X_n$. Let $X_i^{(1)} \,:=\, X_i$ for every $1 \,\le\, i \,\le\, n$. We construct a sequence of Bott towers of decreasing height. Let $\pi_i\,:\, X_i \,\longrightarrow\, X_1$ be the composition of maps in \eqref{bn} for all $2 \,\le\, i \,\le\, n$. Define $X_i^{(2)} \,:=\,
\pi_i^{-1}(\pi_n(x))$. Then $$X_n^{(2)}\,\longrightarrow\, X_{n-1}^{(2)}\,\longrightarrow\, \cdots
\,\longrightarrow\,  X_2^{(2)} \, =\,\mathbb{P}^1$$ is a Bott tower of height \(n-1\) with positive Bott numbers. Note that $x \,\in\, X_n^{(2)}$. Let $\pi_{2,i}\,:\, X_i^{(2)} \,\longrightarrow\, X_2^{(2)}$ be the composition of
these map for every $3 \,\le\, i \,\le\, n$. Similarly, define $X_i^{(3)} \,:=\,\pi_{2,i}^{-1}(\pi_{2,n}(x))$. Proceeding this way, we obtain a special class of subvarieties $X_i^{(j)}$ of $X_n$ for every $1\,\le\, j \,\le \,i \,\le\, n$ which \textit{depend on the given point
	$x$}. Note that $x \in X_n^{(j)}$ for all $1 \, \le \, j \, \le \, n$ and $X_i^{(i)} \,= \,\mathbb{P}^1$ for each $1 \,\le\, i \,\le\, n$.

Let $$X_n^{(i)} \longrightarrow X^{(i)}_{n-1}\longrightarrow \ldots
\longrightarrow X^{(i)}_{i+1}\longrightarrow X^{(i)}_i=\mathbb{P}^1$$ be a Bott tower of height \(n-i+1\) for $1 \,\le\, i \,\le\, n$. Then there is a section map 
$X_{j-1}^{(i)} \,\longrightarrow\, X_{j}^{(i)}$ for every $i+1 \,\le\, j \,\le\, n$. 
For each $1\,\le\, i \,\le\, n$, let $\sigma_i\,:\, X_i^{(i)} \,\longrightarrow\, X_n^{(i)}$ be the composition
of these section maps. Define
\begin{equation}\label{gni}
	\Gamma_n^{(i)} \,:=\,\sigma_i( X_i^{(i)})\, \subset X_n^{(i)} .
\end{equation}
Note that $\Gamma_n^{(n)} \,=\, X_n^{(n)}$ and we simply denote $\Gamma_n^{(1)}$ by $\Gamma_n$. 

A Cartier divisor $D\,=\,\sum_{i=1}^{k}a_iD_{i}$ on $X_n$ is ample (respectively, nef) if and only if $a_i \,> 0 \text{ (respectively, $a_i \ \geq \ 0$) }$ for all 
$i\,=\,1,\, \cdots, \,n$ (see \cite[Theorem 3.1.1, Corollary 3.1.2]{KD}).

\begin{prop}\cite[Proposition 2.3]{Sc_bott} \label{dual-basis}
	The curves $\Gamma_n,\, \Gamma_n^{(2)},\,\cdots,\,\Gamma_n^{(n)}$ defined in \eqref{gni} span 
	$\overline{\rm{NE}}(X_n)$, and they are dual to $D_1,\, \cdots ,\, D_n$. 
\end{prop}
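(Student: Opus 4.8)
The plan is to establish the duality relations $D_j \cdot \Gamma_n^{(i)} \,=\, \delta_{ij}$ first, and then to deduce the spanning statement as a formal consequence of the duality between the nef cone and the Mori cone; the linear independence of the $\Gamma_n^{(i)}$ will come for free once the duality is known.

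For the duality I would work entirely in the quotient coordinates of Subsection \ref{qc}. Each section map entering the definition of $\sigma_i$ has the coordinate form $[\cdots : z_{j-1} : w_{j-1}] \mapsto [\cdots : z_{j-1} : w_{j-1} : 0 : 1]$, so a point of $\Gamma_n^{(i)} = \sigma_i(X_i^{(i)})$ has its stage-$i$ coordinate $[z_i : w_i]$ ranging over all of $X_i^{(i)} \cong \mathbb{P}^1$, the coordinates of each stage $j > i$ pinned at $[0:1]$ (that is, $z_j = 0$ and $w_j = 1$), and the coordinates of each stage $j < i$ fixed at the constants cut out by $x$ through the successive fibre-cutting defining $X_i^{(i)}$. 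Since $D_j = \mathbb{V}(w_j)$, this reading off immediately settles two of the three cases: for $j > i$ we have $w_j \equiv 1$ on $\Gamma_n^{(i)}$, so the curve is disjoint from $D_j$ and $D_j \cdot \Gamma_n^{(i)} = 0$; for $j = i$ the coordinate $w_i$ vanishes to order one at the single point $[z_i:w_i]=[1:0]$ of the fibre, so $\mathcal{O}_{X_n}(D_i)$ restricts to a degree-one line bundle on $\Gamma_n^{(i)}\cong\mathbb{P}^1$ and $D_i \cdot \Gamma_n^{(i)} = 1$.

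The case $j < i$ is where I expect the only genuine subtlety, since there $w_j$ is a constant on $\Gamma_n^{(i)}$ that may or may not vanish depending on whether the fixed point prescribed by $x$ happens to lie on $D_j$, so the intersection number cannot be read off naively for a special $x$. I would circumvent this by numerical invariance of the intersection pairing: as $x$ varies the curves $\Gamma_n^{(i)}$ sweep out an algebraic family (the construction is algebraic in $x$ and parametrised by an irreducible base), hence are all numerically equivalent, so $D_j \cdot \Gamma_n^{(i)}$ may be computed for a generic $x$, for which the fixed stage-$j$ coordinate satisfies $w_j \neq 0$. For such $x$ the curve is disjoint from $D_j$ and the product vanishes. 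Combining the three cases gives $D_j \cdot \Gamma_n^{(i)} = \delta_{ij}$.

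Finally, the spanning statement follows formally. Since $\text{Pic}(X_n) = \Z D_1 \oplus \cdots \oplus \Z D_n$, the classes $D_1, \ldots, D_n$ form a basis of $N^1(X_n)_{\R}$, and the duality just proved exhibits $\Gamma_n, \Gamma_n^{(2)}, \ldots, \Gamma_n^{(n)}$ as the dual basis of $N_1(X_n)_{\R}$; in particular these $n$ curves are linearly independent. By the ampleness/nef criterion recalled above, the nef cone of $X_n$ is the closed positive orthant $\{\sum_i a_i D_i : a_i \ge 0\}$ in this basis. Since the nef cone and the Mori cone are dual to one another (Kleiman's criterion), $\overline{\rm{NE}}(X_n)$ equals $\{\gamma \in N_1(X_n)_{\R} : \gamma \cdot D_j \ge 0 \text{ for all } j\}$; writing $\gamma = \sum_i b_i \Gamma_n^{(i)}$ in the dual basis gives $\gamma \cdot D_j = b_j$, so this cone is exactly $\{\sum_i b_i \Gamma_n^{(i)} : b_i \ge 0\}$, the cone spanned by $\Gamma_n, \Gamma_n^{(2)}, \ldots, \Gamma_n^{(n)}$. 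The entire difficulty is thus concentrated in the $j < i$ case of the duality computation; once the numerical-family reduction is in place, the remainder is bookkeeping and formal cone duality.
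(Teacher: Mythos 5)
The first thing to note is that the paper itself contains no proof of Proposition \ref{dual-basis}: the statement is imported from \cite[Proposition 2.3]{Sc_bott}, so the comparison below is with the argument of that reference and the standard toric one, not with anything in the present text. Your proof is correct. The computation for $j\ge i$ --- disjointness from $D_j$ when $j>i$ because $w_j\equiv 1$ on $\Gamma_n^{(i)}$, and a single transverse intersection point when $j=i$ --- is exactly the expected reading of the quotient coordinates of Subsection \ref{qc}, and your concluding step (the nef cone is the closed orthant on $D_1,\dots,D_n$ by \cite{KD}, so $\overline{\rm NE}(X_n)$ is the dual orthant on the $\Gamma_n^{(i)}$ by Kleiman's criterion plus double duality of closed convex cones) is the same formal bookkeeping any proof must perform. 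Where you genuinely differ is the case $j<i$, which you correctly identify as the only delicate point: when the relevant coordinate of $x$ vanishes one has $\Gamma_n^{(i)}\subset D_j$, and the intersection number cannot be read off set-theoretically. Your fix --- the curves $\Gamma_n^{(i)}(x)$ form an algebraic family over an irreducible base as $x$ varies (the incidence variety is a $\mathbb{P}^1$-bundle over the variety parametrizing the first $i-1$ stages), hence all members are algebraically and therefore numerically equivalent, so one may compute with $x$ generic --- is sound, and it has the side benefit of showing that the numerical class of $\Gamma_n^{(i)}$ is independent of $x$, a fact the proposition tacitly requires in order to make sense. The alternative route, which is the one suggested by the toric framework and by the way the present paper later invokes \cite[Proposition 2.2]{Sc_bott} in Section \ref{X3}, is intersection-theoretic: one writes $\Gamma_n^{(i)}\equiv D_1\cdots D_{i-1}\cdot D'_{i+1}\cdots D'_n$ in the Chow ring and computes $D_j\cdot\Gamma_n^{(i)}$ for $j<i$ by substituting $D_j\sim_{\rm lin} D'_j+c_{1,j}D_1+\cdots+c_{j-1,j}D_{j-1}$ from \eqref{linequiv}, using $D_j\cdot D'_j=0$ (no cone of the fan contains both $v_j$ and $v_{n+j}$) and inducting on $j$. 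That computation is purely combinatorial and also identifies the classes of the $\Gamma_n^{(i)}$ among invariant curves; your argument is more geometric and needs nothing beyond the quotient construction. Either way the heart of the matter is the duality $D_j\cdot\Gamma_n^{(i)}=\delta_{ij}$, and your proposal establishes it completely.
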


We conclude this subsection with the following properties of these curves defined in \eqref{gni}, which will be needed in later sections while computing Seshadri constants:
\begin{equation}\label{curveGam}
\begin{split}
	&  x \in \Gamma_n^{(i)} \text{ implies that }  x \text{ is also in } \Gamma_n^{(i+1)} \text{ for } i=1, \ldots, n ;\\
	&  \text{if } x \,\in\, \Gamma_n^{(i)}\setminus \Gamma_n^{(i-1)} \, \text{and}\, C \, \text{is a curve containing}\, x, \text{ then } C \,\not\subset\, D_i', \, \text{for any } i =2 \ldots, n
\end{split}
\end{equation}
(see \cite[Lemma 2.4]{Sc_bott}).

\section{Mori cone of projective bundles on toric varieties}\label{mcot} 

One of the key ingredient in the computation of Seshadri constant of a vector bundle is the description of the Mori cone of the projectivized bundle. In case of equivariant vector bundles, it was remarked in \cite[Remark 2.5]{HMP} that the Mori cone is rational polyhedral. In this section, we briefly recall the description of Mori cone of projectivization of equivariant vector bundles on toric varieties.

Let $\mathcal{E}$ be an equivariant nef vector bundle of rank $r$ on a nonsingular complete complex toric variety \(X\). Let $\mathbb{P}(\mathcal{E} )$ be the projective bundle associated to $\mathcal{E}$ and \begin{equation*}
	\pi:\mathbb{P}(\mathcal{E} )\rightarrow X
\end{equation*}
be the natural projection. Let $\xi:=\mathcal{O}_{\mathbb{P}(\mathcal{E} )}(1)$ be the tautological line bundle on $\mathbb{P}(\mathcal{E} )$.
Let $l_1, \cdots, l_{\mathfrak{n}}$ be the invariant curves on $X$. 
The following result was proved in \cite[Theorem 2.1]{HMP}:
\begin{prop}\label{HMPT2.1}
	The equivariant vector bundle $\mathcal{E}$ is nef if and only if $\mathcal{E}|_{l_i}$ is nef for all \(i=1, \ldots, \mathfrak{n}\).
\end{prop}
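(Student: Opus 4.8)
The plan is to read off nefness of $\mathcal{E}$ from Kleiman's criterion on $\mathbb{P}(\mathcal{E})$ and then cut down the family of test curves to the $T$-invariant ones, exploiting the equivariant structure. Recall that, by definition, $\mathcal{E}$ is nef if and only if $\xi = \mathcal{O}_{\mathbb{P}(\mathcal{E})}(1)$ is nef on $\mathbb{P}(\mathcal{E})$, and a line bundle is nef exactly when it has nonnegative degree on every irreducible curve. The forward implication is immediate: if $\xi$ is nef, its restriction to the subvariety $\pi^{-1}(l_i) = \mathbb{P}(\mathcal{E}|_{l_i})$ stays nef, and this restriction is the tautological bundle $\mathcal{O}_{\mathbb{P}(\mathcal{E}|_{l_i})}(1)$, which is precisely the statement that $\mathcal{E}|_{l_i}$ is nef for every invariant curve $l_i$. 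The content of the proposition is therefore the converse.

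For the converse I would first record that, since $\mathcal{E}$ is $T$-equivariant, the torus $T$ acts on $\mathbb{P}(\mathcal{E})$, the projection $\pi$ is $T$-equivariant, and $\xi$ carries a $T$-linearization. The decisive input is that $\overline{\mathrm{NE}}(\mathbb{P}(\mathcal{E}))$ is spanned by the classes of $T$-invariant curves: given any irreducible curve $C$, degenerating it along a generic one-parameter subgroup $\lambda$ of $T$, i.e. taking the flat limit of $\lambda(t)\cdot C$ as $t \to 0$, yields an effective cycle numerically equivalent to $C$ whose components have strictly smaller-dimensional $T$-orbits; iterating this produces a representative supported on $T$-invariant curves. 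Consequently $\xi$ is nef as soon as $\xi \cdot C \ge 0$ for every $T$-invariant curve $C$, so it suffices to treat these.

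Next I would classify the $T$-invariant curves via $\pi$. Since $\pi$ is equivariant, $\pi(C)$ is an irreducible $T$-invariant subvariety of $X$ of dimension $0$ or $1$, hence either a torus fixed point $x_\sigma$ (attached to a maximal cone $\sigma$) or an invariant curve $l_i$. In the first case $C$ is a \emph{vertical} curve contained in the fiber $\pi^{-1}(x_\sigma) \cong \mathbb{P}^{r-1}$, on which $\xi$ restricts to $\mathcal{O}_{\mathbb{P}^{r-1}}(1)$; being ample there, it gives $\xi \cdot C > 0$ with no hypothesis needed. In the second case $C$ is a \emph{horizontal} curve lying in $\pi^{-1}(l_i) = \mathbb{P}(\mathcal{E}|_{l_i})$, where $\xi$ restricts to $\mathcal{O}_{\mathbb{P}(\mathcal{E}|_{l_i})}(1)$; by hypothesis $\mathcal{E}|_{l_i}$ is nef, so this restriction is nef and $\xi \cdot C \ge 0$. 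Thus $\xi$ meets every generator of $\overline{\mathrm{NE}}(\mathbb{P}(\mathcal{E}))$ nonnegatively, hence is nef, which is to say $\mathcal{E}$ is nef.

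The main obstacle is the reduction step, namely the generation of $\overline{\mathrm{NE}}(\mathbb{P}(\mathcal{E}))$ by $T$-invariant curves through the degeneration principle; this is exactly the phenomenon underlying the fact, recalled in Section \ref{mcot}, that the Mori cone of the projectivized equivariant bundle is rational polyhedral. Once that reduction is granted, the dichotomy of vertical versus horizontal invariant curves and the two elementary restriction computations complete the proof.
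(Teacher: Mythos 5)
Your proposal is correct. Note that the paper itself gives no proof of Proposition \ref{HMPT2.1}; it simply cites \cite[Theorem 2.1]{HMP}, and your argument --- restriction for the easy direction, then degeneration of an arbitrary curve in $\mathbb{P}(\mathcal{E})$ to an effective cycle of $T$-invariant curves via one-parameter subgroups, followed by the dichotomy between curves in fibers over fixed points (where $\xi$ is ample) and curves in $\mathbb{P}(\mathcal{E}|_{l_i})$ (where $\xi$ is nef by hypothesis) --- is precisely the proof given in that reference, the same degeneration principle the present paper invokes in Section \ref{mcot} for the polyhedrality of the Mori cone.
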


Since each $l_i$ is isomorphic to $\mathbb{P}^1$, we have that $\mathcal{E}|_{l_i}$ is a direct sum of line bundles. This means that $\mathbb{P}(\mathcal{E}|_{l_i})$ has the structure of a toric variety. Let  \begin{equation*}
	\pi_i:\mathbb{P}(\mathcal{E}|_{l_i} )\rightarrow l_i
\end{equation*}
be the natural projection. Let $\xi_i:=\mathcal{O}_{\mathbb{P}(\mathcal{E}|_{l_i} )}(1)$ and $f_i$ denote the numerical equivalence class of a fibre of the map $\pi_i$ (see the commutative diagram Figure \ref{scfig}).

\begin{figure}[h] 
	\[ \xymatrix{
		\mathbb{P}(\mathcal{E}|_{l_i}) \ar@{^{(}->}[r]^{\eta_i}\ar[d]_{\pi_{i}} & \mathbb{P}(\mathcal{E} )\ar[d]^{\pi}\\
		l_i \ar@{^{(}->}[r]       & X
	} \]
	\caption{}
	\label{scfig}
\end{figure}

Using \cite[Lemma 2.1]{Fulger}, we have the following description of the Nef cone of $\mathbb{P}(\mathcal{E}|_{l_i})$: 
\begin{equation}\label{restn}	
	\text{Nef}(\mathbb{P}(\mathcal{E}|_{l_i}))=\Big\{a(\xi_i-\mu_{min}(\mathcal{E}|_{l_i})f_i)+bf_i\mid a,b \in \mathbb{R}_{\geq0}\Big\}.
\end{equation}  
From the Chern relation of \(\xi_{i}\), we have \(\xi_{i}^r= \text{deg }(\mathcal{E}|_{l_i})\).
Set  \begin{equation*}
	\begin{split}
		&m_i:=\text{deg}(\mathcal{E}|_{l_i})-\mu_{\text{min}}(\mathcal{E}|_{l_i}) \text{ for }i=1, \, \ldots, \, \mathfrak{n}.
	\end{split}
\end{equation*}
We have the following intersection product on \(\mathbb{P}(\mathcal{E}|_{l_i} )\):
\begin{equation}\label{prodcurve}
	\begin{split}
		&\xi_{i} \cdot (\xi_{i}^{r-2}f_i)=1, \\
		&\xi_{i} \cdot (\xi_{i}^{r-1}-m_i \, \xi_{i}^{r-2}f_i)=\mu_{\text{min}}(\mathcal{E}|_{l_i})
	\end{split}
\end{equation} (see \cite[Equation (1.1)]{Fulger}).
Dualizing \eqref{restn}, we get the description of the Mori cone 
\begin{equation}\label{restnmori}
	\overline{\rm{NE}}(\mathbb{P} (\mathcal{E}|_{l_i}))=\Big\{a(\xi_{i}^{r-2}f_i)+b(\xi_{i}^{r-1}-m_i \, \xi_{i}^{r-2}f_i) \mid a,b \in \mathbb{R}_{\geq 0} \Big\}.
\end{equation}
We note that for each $i$, 
\begin{equation}\label{pushforward}
	\begin{split}
		&\xi_{i}^{r-2}f_i= \Sigma_i, \, \xi_{i}^{r-1}-m_i \, \xi_{i}^{r-2}f_i=\Omega_i,
	\end{split}
\end{equation}
where \(\Sigma_i\) is an invariant fiber curve and \(\Omega_i\) is an invariant section curve in \(\mathbb{P} (\mathcal{E}|_{l_i})\) (see \cite[Remark 3]{DiRocco_defect}).
We fix the following notation:
\begin{equation}\label{mor}
	\begin{split}
		&C_0:= \eta_i(\Sigma_i), \, C_i:=\eta_i(\Omega_i)\text{ for }i=1, \, \ldots, \, \mathfrak{n}.
	\end{split}
\end{equation}
Here \(C_0\) is well defined  up to numerical equivalence since all fiber curves are numerically equivalent. Then we see that 
\begin{equation}\label{pi*}
	\pi_*C_{0}=0 \text{ and }	\pi_*C_i=l_i \text { for } i=1, \, \ldots, \, \mathfrak{n} .
\end{equation}
\begin{prop}\label{mc_prop}
	The curves $C_0, \cdots, C_{\mathfrak{n}}$ defined in \eqref{mor} span the Mori cone $\overline{\rm{NE}}(\mathbb{P} (\mathcal{E}))$.
\end{prop}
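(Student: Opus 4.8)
The plan is to prove the two opposite inclusions between $\overline{\rm{NE}}(\mathbb{P}(\mathcal{E}))$ and the closed cone $K := \sum_{j=0}^{\mathfrak{n}} \mathbb{R}_{\geq 0}[C_j]$. The inclusion $K \subseteq \overline{\rm{NE}}(\mathbb{P}(\mathcal{E}))$ is immediate, since each $C_j$ is an effective curve by \eqref{mor}. For the reverse inclusion it suffices, because $\overline{\rm{NE}}$ is the closure of the cone generated by classes of irreducible curves, to show that $[C] \in K$ for every irreducible curve $C$ on $\mathbb{P}(\mathcal{E})$.

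First I would record the elementary pushforward picture. The projection induces $\pi_* : N_1(\mathbb{P}(\mathcal{E})) \to N_1(X)$, whose kernel is the line $\mathbb{R}f$ spanned by the fiber class, and by \eqref{pi*} we have $[C_0]=f$ and $\pi_*[C_i]=[l_i]$. If $C$ is contracted by $\pi$, then $C$ lies in a fiber isomorphic to $\mathbb{P}^{r-1}$; since $N_1(\mathbb{P}^{r-1})=\mathbb{R}$, the class $[C]$ is a positive multiple of the line class, i.e.\ of $f=[C_0]$, and hence $[C]\in K$.

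The substance is the non-contracted case, where I would exploit the torus action. Because $\mathcal{E}$ is equivariant, $T$ acts on $\mathbb{P}(\mathcal{E})$ compatibly with $\pi$, and by \cite[Remark 2.5]{HMP} the cone $\overline{\rm{NE}}(\mathbb{P}(\mathcal{E}))$ is rational polyhedral. I would then reduce to $T$-invariant curves: applying a generic one-parameter subgroup $\lambda$ of $T$ and forming the flat limit $\lim_{t\to 0}\lambda(t)\cdot C$ produces an effective $1$-cycle numerically equivalent to $[C]$ (a connected group does not move numerical classes), all of whose components are $T$-invariant curves. It therefore suffices to check $[\Gamma]\in K$ for each $T$-invariant irreducible curve $\Gamma$. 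Such a $\Gamma$ maps under the equivariant $\pi$ onto a $T$-invariant irreducible subvariety of $X$, namely either a fixed point (handled by the contracted case) or an invariant curve $l_i$. In the latter case $\Gamma\subseteq \pi^{-1}(l_i)=\mathbb{P}(\mathcal{E}|_{l_i})$, so $\Gamma$ is a curve in the \emph{toric} variety $\mathbb{P}(\mathcal{E}|_{l_i})$, and by the description \eqref{restnmori} its class is $a\,\Sigma_i+b\,\Omega_i$ with $a,b\geq 0$. Pushing forward by $\eta_i$ and using \eqref{mor} and \eqref{pushforward} yields $[\Gamma]=a\,[C_0]+b\,[C_i]\in K$, which completes the argument.

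The main obstacle is precisely this reduction to $T$-invariant curves. Unlike $X$, the total space $\mathbb{P}(\mathcal{E})$ is in general not toric—it is a $T$-variety of positive complexity—so generation of its Mori cone by invariant curves is not a formal toric fact and must be extracted from the $T$-action. The delicate points are that the components of the limit cycle are genuinely $T$-invariant rather than merely $\lambda(\mathbb{C}^*)$-invariant, which I would arrange by taking $\lambda$ regular so that the relevant fixed loci coincide, and that a non-contracted invariant component lands inside a single $\mathbb{P}(\mathcal{E}|_{l_i})$; once there, the toricity of $\mathbb{P}(\mathcal{E}|_{l_i})$ (from the splitting of $\mathcal{E}|_{l_i}$) together with \eqref{restnmori} finishes the job. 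As a consistency check on the answer, the divisor $\xi+\pi^*B$ with $B\cdot l_i=-\mu_{\min}(\mathcal{E}|_{l_i})$ restricts to the nef boundary class $\xi_i-\mu_{\min}(\mathcal{E}|_{l_i})f_i$ on each $\mathbb{P}(\mathcal{E}|_{l_i})$, and is the natural divisor supporting the extremal rays $[C_i]$.
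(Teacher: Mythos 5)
Your strategy is genuinely different from the paper's: you work directly on the curve side, degenerating an arbitrary irreducible curve to a $T$-invariant $1$-cycle and then decomposing invariant curves inside the toric subvarieties $\mathbb{P}(\mathcal{E}|_{l_i})$, whereas the paper never touches individual curves: it cites \cite[Remark 2.5]{HMP} for the statement that a line bundle on $\mathbb{P}(\mathcal{E})$ is nef if and only if its restriction to every $\mathbb{P}(\mathcal{E}|_{l_i})$ is nef, deduces from \eqref{restnmori} that $\mathrm{Nef}(\mathbb{P}(\mathcal{E}))=V^{\vee}$, and concludes $\overline{\rm NE}(\mathbb{P}(\mathcal{E}))=V$ by double duality of a closed polyhedral cone. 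Most of your steps are fine (the contracted case, the fact that a non-contracted $T$-invariant irreducible curve lies in some $\mathbb{P}(\mathcal{E}|_{l_i})$, and the decomposition there via \eqref{restnmori} and \eqref{pushforward}), but there is a genuine gap exactly at the step you flag as delicate: choosing $\lambda$ regular, i.e.\ with $\mathbb{P}(\mathcal{E})^{\lambda}=\mathbb{P}(\mathcal{E})^{T}$, does \emph{not} force the components of $\lim_{t\to 0}\lambda(t)\cdot C$ to be $T$-invariant; they are only $\lambda(\mathbb{C}^{*})$-invariant. Already in a toric situation this fails: on $\mathbb{P}^{3}$ with the standard $(\mathbb{C}^{*})^{3}$-action, the one-parameter subgroup $\lambda(t)=(t,t^{2},t^{4})$ is regular (the weights $0,1,2,4$ are pairwise distinct, so the $\lambda$-fixed points are the four coordinate points), yet the flat limit under $\lambda$ of the twisted cubic $\{[1:u:u^{2}:u^{3}]\}$ is the cycle consisting of the conic $\{x_{3}=0,\ x_{0}x_{2}=x_{1}^{2}\}$ plus the line $\{x_{0}=x_{1}=0\}$, and the conic is $\lambda$-invariant but not torus-invariant. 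So a single generic one-parameter degeneration does not achieve the reduction to $T$-invariant curves.

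The reduction you need is true, but it requires a stronger standard argument: either apply Borel's fixed point theorem to the closure of the $T$-orbit of the point $[C]$ in the Chow variety of $\mathbb{P}(\mathcal{E})$, which produces a $T$-fixed Chow point, i.e.\ a $T$-invariant effective cycle algebraically (hence numerically) equivalent to $C$, whose components are then individually $T$-invariant because $T$ is connected; or iterate one-parameter degenerations, using that for a subtorus $S\subseteq T$ the locus of $S$-invariant cycles in the Chow variety is closed and $T$-stable (as $T$ is abelian), so each further degeneration enlarges the subtorus of invariance by one dimension and the process terminates after $\dim T$ steps. Alternatively you could simply have invoked \cite[Theorem 2.1, Remark 2.5]{HMP}, whose proof consists precisely of this degeneration statement; that is in effect what the paper does, and it is why its duality argument can bypass the issue entirely. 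With the degeneration step repaired in one of these ways, your proof is correct and is essentially an unpacking of the black box that the paper cites, which has the merit of exhibiting explicit effective decompositions of curve classes rather than inferring the cone equality from its dual.
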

\begin{proof}
	Consider the polyhedral cone $$V:=\Big\{a_0C_0+ \cdots +a_{\mathfrak{n}}C_{\mathfrak{n}} \mid a_i \in \mathbb{R}_{\geq 0} \text{ for } i=0, \ldots, \mathfrak{n}\Big\}.$$ As remarked in \cite[Remark 2.5]{HMP}, a line bundle $L$ on $\mathbb{P} (\mathcal{E})$ is nef if and only if its restriction to every $\mathbb{P}(\mathcal{E}|_{l_i})$ is nef for each $i$. Hence, using the description in (\ref{restnmori}) we have $L$ is a nef line bundle on $\mathbb{P} (\mathcal{E})$ if and only if $L.C_i \geq 0$ for all $i=1,\cdots, \mathfrak{n}$. This implies that $\text{Nef}(\mathbb{P}(\mathcal{E}))=V^{\vee}$. Since \(V\) is polyhedral, we have \(V^{\vee \, \vee} =V \). Thus by duality, we have $\overline{\rm{NE}}(\mathbb{P} (\mathcal{E}))=V$.
\end{proof}

\noindent
Now using the intersection products from \eqref{prodcurve} and \eqref{pushforward}, we have the following:
\begin{equation}\label{intprod}
	\begin{split}
		& \xi \cdot C_{0}= \xi|_{\mathbb{P}(\mathcal{E}|_{l_i} )} \cdot \Sigma_i=\xi_{i} \cdot (\xi_{i}^{r-2}f_i)=1, \text{ and} \\
		& \xi \cdot C_i=\xi|_{\mathbb{P}(\mathcal{E}|_{l_i} )} \cdot \Omega_i=\xi_{i} \cdot (\xi_{i}^{r-1}-m_i \, \xi_{i}^{r-2}f_i)=\mu_{\text{min}}(\mathcal{E}|_{l_i}) \text{ for } i=1, \, \ldots, \, \mathfrak{n}.
	\end{split}
\end{equation}

\section{Seshadri constants of equivariant vector bundles on projective spaces}\label{scpn} 
Let us recall the fan structure of $X:=\mathbb{P}^n$, \(n \geq 2\). Let $\Delta$ denote the fan of \(X\) in the lattice $\Z^n$. Let \(e_1, \, \ldots, \, e_n\) denote the standard basis of \(\Z^n\) and set \(e_0=-e_1- \, \ldots \, -e_n\). Then the fan consists of the rays \(e_0, \, e_1, \, \ldots, \,  e_n\) and maximal cones of the form \(\text{Cone}(e_0, \ldots, \widehat{e}_i, \ldots, e_n)\), where \(i=0, \, 1, \, \ldots, \, n\). Here by \(\widehat{e}_i\) we mean that \(e_i\) is omitted from the relevant collection. The divisors \(D_0, \, D_1, \, \ldots, \, D_n\) corresponding to the rays \(e_0, \, e_1, \, \ldots, \,  e_n\), respectively, are linearly equivalent. Let  the nef cone of $X$ be generated by $D_0$. There are \(\mathfrak{n}\) invariant curves \((\cong \mathbb{P}^1)\) in \(X\), say \(l_1, \, \ldots, \, l_{\mathfrak{n}}\) where \(\mathfrak{n}:={n+1 \choose 2} \). All the invariant curves are numerically equivalent to each other. So the Mori cone is generated by any of them, say \(l_1\).

Let $\mathcal{E}$ be a nef equivariant vector bundle of rank $r$ on $X$. Let 
\begin{equation*}
	\pi:\mathbb{P}(\mathcal{E} )\rightarrow X
\end{equation*}
denote the projectivization map. Let 
\begin{equation*}
	\begin{split}
		&m_j=\text{deg}(\mathcal{E}|_{l_j})-\mu_{\text{min}}(\mathcal{E}|_{l_j}) \text{ for }j=1, \, \ldots, \, \mathfrak{n}.
	\end{split}
\end{equation*}
\noindent
Then from \eqref{mor}, we have the curves \(C_0, \, C_1, \ldots, C_{\mathfrak{n}}\).
As before, we have
\begin{equation*}
	\pi_*C_{0}=0,  \text{ and } \pi_*C_j=l_j \text { for } j=1, \, \ldots, \, \mathfrak{n}.
\end{equation*}
By Proposition \ref{mc_prop}, we have that 
\begin{equation*}
	\overline{\rm{NE}}(\mathbb{P} (\mathcal{E}))= \R_{\geq 0} \, C_0 + \R_{\geq 0}C_1 + \cdots + \R_{\geq 0} \, C_{\mathfrak{n}}.
\end{equation*}
We have the following intersection products (see \eqref{intprod})
\begin{equation}\label{intprodP}
	\xi \cdot C_{0}=1, \text{ and } \xi \cdot C_j=\mu_{\text{min}}(\mathcal{E}|_{l_j}) \text{ for } j=1, \, \ldots, \, \mathfrak{n}.
\end{equation}

\begin{thm}\label{SC_P}
	Let $\mathcal{E}$ be a nef equivariant vector bundle of rank $r$ on the projective space $X=\mathbb{P}^n \, (n \geq 2)$  such that 
	\begin{equation}\label{A1}
		\mu_{\text{min}}(\mathcal{E}|_l)  \leq \, \min\limits_{1 \, \leq \, i \, \leq \mathfrak{n}} \left\{\mu_{\text{min}}(\mathcal{E}|_{l_i})  \right\} \text{  for all lines } l \text{ in } \mathbb{P}^n. 
	\end{equation}
	Then for any point $x \in X$, we have 
	\[\varepsilon(\mathcal{E},x)=\, \min\limits_{1 \, \leq \, i \, \leq \mathfrak{n}} \left\{\mu_{\text{min}}(\mathcal{E}|_{l_i})  \right\}.\]	
	\begin{comment}
	In particular, for the tangent bundle \(\mathscr{T}_{ \mathbb{P}^n}\), we have \[\varepsilon(\mathscr{T}_{ \mathbb{P}^n}x)=1\] for any point $x \in \mathbb{P}^n$.
	\end{comment}
\end{thm}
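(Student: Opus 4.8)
The plan is to prove the two inequalities $\varepsilon(\mathcal{E},x)\ge \mu^{*}$ and $\varepsilon(\mathcal{E},x)\le \mu^{*}$ separately, where I abbreviate $\mu^{*}:=\min_{1\le i\le \mathfrak{n}}\mu_{\min}(\mathcal{E}|_{l_i})$ and write $H$ for the hyperplane class $D_0$ on $\mathbb{P}^n$. It is worth stressing at the outset that the lower bound is unconditional; hypothesis \eqref{A1} will be used only for the upper bound.

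For the lower bound, the key step is to show that the $\mathbb{R}$-divisor class $\xi-\mu^{*}\pi^{*}H$ is nef on $\mathbb{P}(\mathcal{E})$. By Proposition~\ref{mc_prop} the cone $\overline{\mathrm{NE}}(\mathbb{P}(\mathcal{E}))$ is spanned by $C_0,C_1,\ldots,C_{\mathfrak{n}}$, so it suffices to test nonnegativity on these generators. Using the projection formula $\pi^{*}H\cdot C=H\cdot\pi_{*}C=\deg\pi_{*}C$ together with \eqref{pi*} and the intersection numbers \eqref{intprodP}, I obtain $(\xi-\mu^{*}\pi^{*}H)\cdot C_0=1$ and $(\xi-\mu^{*}\pi^{*}H)\cdot C_j=\mu_{\min}(\mathcal{E}|_{l_j})-\mu^{*}\ge 0$ for $1\le j\le\mathfrak{n}$, the latter being nonnegative precisely because $\mu^{*}$ is defined as the minimum. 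Hence $\xi-\mu^{*}\pi^{*}H$ is nef. Consequently, for every $C\in\mathcal{C}_{\mathcal{E},x}$ we get $\xi\cdot C\ge \mu^{*}(\pi^{*}H\cdot C)=\mu^{*}\deg\pi_{*}C$. Since the multiplicity of a one-cycle at a point never exceeds its degree, $\deg\pi_{*}C\ge\mathrm{mult}_{x}\pi_{*}C$, and because $\mu^{*}\ge 0$ (a nef bundle on a curve has nonnegative minimal slope) this yields $\frac{\xi\cdot C}{\mathrm{mult}_{x}\pi_{*}C}\ge\mu^{*}$. Taking the infimum over $\mathcal{C}_{\mathcal{E},x}$ in Definition~\ref{fulm} gives $\varepsilon(\mathcal{E},x)\ge\mu^{*}$.

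For the upper bound I would invoke \eqref{FMcorp3.21}, which expresses $\varepsilon(\mathcal{E},x)$ as the infimum of $\varepsilon(\mathcal{E}|_{C},x)$ over all curves $C$ through $x$. Choosing $C=l$ to be an arbitrary line through $x$, the restriction $\mathcal{E}|_{l}$ is a nef bundle on $l\cong\mathbb{P}^1$, so Proposition~\ref{FMcor3.21} gives $\varepsilon(\mathcal{E}|_{l},x)=\mu_{\min}(\mathcal{E}|_{l})$. Hypothesis \eqref{A1} then bounds $\mu_{\min}(\mathcal{E}|_{l})\le\mu^{*}$, so that $\varepsilon(\mathcal{E},x)\le\mu_{\min}(\mathcal{E}|_{l})\le\mu^{*}$. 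Combining this with the lower bound yields the asserted equality $\varepsilon(\mathcal{E},x)=\mu^{*}$.

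The step requiring the most care is the lower bound: reducing the nefness of $\xi-\mu^{*}\pi^{*}H$ to a finite check on the Mori-cone generators (which is what makes the otherwise infinite infimum in Definition~\ref{fulm} tractable), and then converting the numerical estimate $\xi\cdot C\ge\mu^{*}\deg\pi_{*}C$ into the multiplicity denominator via $\mathrm{mult}_{x}\pi_{*}C\le\deg\pi_{*}C$ and $\mu^{*}\ge 0$. The upper bound, by contrast, is essentially formal once \eqref{FMcorp3.21} and Proposition~\ref{FMcor3.21} are available. I note, finally, that the unconditional lower bound forces $\mu_{\min}(\mathcal{E}|_{l})\ge\varepsilon(\mathcal{E},x)=\mu^{*}$ for every line $l$ through each point, so that under \eqref{A1} in fact $\mu_{\min}(\mathcal{E}|_{l})=\mu^{*}$ for all lines $l$ in $\mathbb{P}^n$.
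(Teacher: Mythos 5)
Your proposal is correct and takes essentially the same route as the paper: the upper bound is word-for-word the paper's argument (restrict to a line through $x$, apply \eqref{FMcorp3.21}, Proposition \ref{FMcor3.21}, and hypothesis \eqref{A1}), while your lower bound merely repackages the paper's — instead of decomposing each $C \equiv \sum_i a_i C_i$ and applying B\'ezout to $\pi_*C$ against a hyperplane through $x$, you test nefness of $\xi - \mu^{*}\pi^{*}H$ on the same Mori-cone generators and then invoke $\mathrm{mult}_x \pi_*C \leq \deg \pi_*C$, which is exactly that B\'ezout step in disguise. Both versions rest on precisely the same ingredients (Proposition \ref{mc_prop}, the intersection numbers \eqref{intprodP}, and B\'ezout in $\mathbb{P}^n$), so the difference is purely one of presentation; your closing observation that \eqref{A1} forces $\mu_{\min}(\mathcal{E}|_l)=\mu^{*}$ for every line is a correct and pleasant by-product not stated in the paper.
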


\begin{proof}
	%Since all the invariant curves \(l_1, \, \ldots, \, l_{\mathfrak{n}}\)  are lines, by hypothesis, we have that $\mu_{\text{min}}(\mathcal{E}|_{l_j}) $ is constant for \(j=1, \, \ldots, \, \mathfrak{n}\). 
	Let \(x \in \mathbb{P}^n\) and \(C \in \mathcal{C}_{\mathcal{E}, x}\). Then \(C \equiv a_0 C_0 + a_1C_1 +\, \ldots \,  + a_{\mathfrak{n}} C_{\mathfrak{n}}\), where \(a_i \in \Z_{\geq 0}\) for \(i=0, \ldots, {\mathfrak{n}}\). Now using \eqref{intprodP}, we have
	\begin{equation}\label{P1}
		\begin{split}
			\frac{\xi \cdot C}{m} & =\frac{a_0 + a_1 \, \mu_{\text{min}}(\mathcal{E}|_{l_1})  + \, \ldots \, + a_{\mathfrak{n}} \, \mu_{\text{min}}(\mathcal{E}|_{l_{\mathfrak{n}}})   }{m}\\
			& \geq  \frac{a_0 + (a_1  + \, \ldots \, + a_{\mathfrak{n}} ) \, \min\limits_{1 \, \leq \, i \, \leq \mathfrak{n}} \left\{\mu_{\text{min}}(\mathcal{E}|_{l_i})  \right\}  }{m}.
		\end{split}
	\end{equation} 
	where \(m={\rm mult}_{x} \pi_*C\).  choose a hyperplane \(H\) through the point \(x\) such that \(\pi_* C \not\subset H\). Then by B\'ezout's theorem, we have
	\begin{equation} \label{projbez1}
		\pi_*C \cdot H \geq m	
	\end{equation}
	Note that \(\pi_* C \equiv (a_1 + \, \ldots \, + a_{\mathfrak{n}} ) \ l_1\) and \(H \equiv D_0\). Then from \eqref{projbez1}, it follows that 
	\[(a_1 + \, \ldots \, + a_{\mathfrak{n}} ) \geq m, \] 
	since $D_0 \cdot l_1=1$. %, so we have \[a_0 + \, \ldots \, + a_{\mathfrak{n}}  \geq m.\]
	Combining this with \eqref{P1}, we get \[\frac{\xi \cdot C}{m} \geq \, \min\limits_{1 \, \leq \, i \, \leq \mathfrak{n}} \left\{\mu_{\text{min}}(\mathcal{E}|_{l_i})  \right\} .\] Hence we have 
	\begin{equation}\label{proj_ineq2}
		\varepsilon(\mathcal{E},x) \geq \, \min\limits_{1 \, \leq \, i \, \leq \mathfrak{n}} \left\{\mu_{\text{min}}(\mathcal{E}|_{l_i})  \right\} .
	\end{equation}
Note that, we have not used our hypothesis on slope of the restriction of the bundle yet.

	To get the reverse inequality, take any line \(l\) passing through the point \(x\). Then we have 
	\begin{equation}\label{proj_ineq1}
		\begin{split}
			\varepsilon(\mathcal{E},x)  & \leq \varepsilon(\mathcal{E}|_l,x) ~ (\text{by \eqref{FMcorp3.21}})\\
			&=\mu_{\text{min}}(\mathcal{E}|_{l}) ~( \text{by Proposition \ref{FMcor3.21}}),\\
			& \leq \, \min\limits_{1 \, \leq \, i \, \leq \mathfrak{n}} \left\{\mu_{\text{min}}(\mathcal{E}|_{l_i})  \right\}  ~ ( \text{by assumption \eqref{A1}}).
		\end{split}
	\end{equation}
	From \eqref{proj_ineq2} and \eqref{proj_ineq1}, we see that \[\varepsilon(\mathcal{E},x) = \, \min\limits_{1 \, \leq \, i \, \leq \mathfrak{n}} \left\{\mu_{\text{min}}(\mathcal{E}|_{l_i})  \right\} .\] 
\end{proof}

\begin{rmk}{\rm
	We observe from \eqref{proj_ineq2} and the proof of Theorem \ref{SC_P} that, for any arbitrary nef equivariant vector bundle \(\mc{E}\) on \(\mathbb{P}^n\) not necessarily satisfying the hypothesis of Theorem \ref{SC_P}, we always have  
	\begin{equation}\label{proj_ineq}
	\varepsilon(\mathcal{E},x) \geq \, \min\limits_{1 \, \leq \, i \, \leq \mathfrak{n}} \left\{\mu_{\text{min}}(\mathcal{E}|_{l_i})  \right\} .	
	\end{equation}
}
\end{rmk}

\begin{question}{\rm
	Does the equality always hold in \eqref{proj_ineq}?}
	\end{question}

\begin{ex}$($cf. \cite[Example 1.3]{FM}$)$ {\rm  Consider the tangent bundle $\mathscr{T}_{ \mathbb{P}^n}$ over the projective \(n\)-space \(\mathbb{P}^n\) for \(n \geq 2\).  For any line \(l \subset \mathbb{P}^n\) the restriction of the tangent bundle $\mathscr{T}_{ \mathbb{P}^n}$ is given by \[\mathscr{T}_{ \mathbb{P}^n}|_{l}=\mathcal{O}_{\mathbb{P}^1}(2) \oplus \mathcal{O}_{\mathbb{P}^1}(1) \oplus \, \cdots \, \oplus \mathcal{O}_{\mathbb{P}^1}(1)\](see \cite[Example p. 14]{okonek}). Thus $\mathscr{T}_{ \mathbb{P}^n}$ satisfies the condition \eqref{A1}, hence for any \(x \in \mathbb{P}^n\) the Seshadri constant is given by \[\varepsilon(\mathscr{T}_{ \mathbb{P}^n}, x)=1.\]	
}	
\end{ex}

\begin{rmk}{\rm
More generally, the condition \eqref{A1} in Theorem \ref{SC_P} is satisfied by nef equivariant uniform vector bundles on $\mathbb{P}^n$. See \cite[Definition 2.2.1]{okonek} for the definition of a uniform bundle.
}
\end{rmk}

\section{Seshadri constants of equivariant vector bundles on Hirzebruch surfaces}\label{hirz}
 Consider the Hirzebruch surface $X_2$. We follow the notations introduced in Subsection \ref{bott}. The invariant curves in $X_2$ are $D'_1, \, D'_2, \, D_1$ and $D_2$. 
 
 Let $\mathcal{E}$ be a nef equivariant vector bundle of rank $r$ on $X_2$ and
 \begin{equation*}
 \pi:\mathbb{P}(\mathcal{E}) \rightarrow X
 \end{equation*}
denote the projectivization map. Let
\begin{equation*}
\begin{split}
&\mu_j:=\mu_{\text{min}}(\mathcal{E}|_{D_j}) \text{, } \mu'_j:=\mu_{\text{min}}(\mathcal{E}|_{D'_j}) \text{ for }j=1, \,2,
\end{split}
\end{equation*}
and
\begin{equation*}
\begin{split}
&m'_j=\text{deg}(\mathcal{E}|_{D'_j})-\mu'_j \text{ for }j=1, \, 2,\\
&m_j=\text{deg}(\mathcal{E}|_{D_{j-2}})-\mu_{j-2} \text{ for }j=3,\,4.
\end{split}
\end{equation*}
\noindent
From \eqref{mor}, we have the curves \(C_0, \, C_1, \ldots, C_{4}\). 
Then we see that 
\begin{equation}\label{hirz-pushforward}
\pi_*C_0=0, \, \pi_*C_j=D'_j \text { for } j=1, \, 2, \, \pi_*C_j=D_{j-2} \text { for } j=3, \, 4.
\end{equation}
The Mori cone is given by
\begin{equation*}
\overline{\rm{NE}}(\mathbb{P} (\mathcal{E}))= \R_{\geq 0} \, C_0 + \R_{\geq 0}C_1 + \cdots + \R_{\geq 0} \, C_4,
\end{equation*}
(see Proposition \ref{mc_prop}).
We have the following intersection products (see \eqref{intprod})
\begin{equation}\label{hirz-intersection}
	\xi \cdot C_0=1, \, \xi \cdot C_j=\mu'_j \text{ for } j=1, \,2, \text{ and } \xi \cdot C_j=\mu_{j-2} \text{ for } j=3, \,4.
\end{equation}

\begin{thm}\label{SC_H}
	Let $\mathcal{E}$ be an equivariant nef vector bundle of rank \(r\) on the Hirzebruch surface \(X_2\) satisfying the following condition:
	\begin{equation}\label{key_H}
		\mu_{\text{min}}(\mathcal{E}|_{D_1}) =\mu_{\text{min}}(\mathcal{E}|_{D_1'}) 
	\end{equation}
Then for any \(x \in X_2\), the Seshadri constant satisfies the following bounds:
\begin{equation*}
	\begin{split}
		\text{min}\{\mu_1, \,\mu_2, \,\mu'_2\} \, \leq & ~ \varepsilon(\mathcal{E},x) \, \leq \text{min}\{\mu_1, \,\mu'_2\} \  {\rm if}\ x\in \Gamma_2, \\
	\text{min}\{\mu_1, \,\mu_2\} \, \leq & ~ \varepsilon(\mathcal{E},x) \, \leq \mu_1 \  {\rm if}\ x\notin \Gamma_2.
	\end{split}
\end{equation*}
In addition, if \(\mu_2 \geq \mu_1 \), then the Seshadri constant is given by:
\begin{equation*}
	\varepsilon(\mathcal{E},x) =
	\begin{cases}
		\text{min}\{\mu_1,\, \mu'_2\}, & {\rm if}\ x\in \Gamma_2, \\
		\mu_1,  & {\rm if}\ x\notin \Gamma_2.
	\end{cases}
\end{equation*} 
\end{thm}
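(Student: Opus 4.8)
The strategy is to use the formula \eqref{FMcorp3.21}, namely $\varepsilon(\mathcal{E},x)=\inf_{x\in C}\varepsilon(\mathcal{E}|_C,x)$, together with the Mori cone description from Proposition \ref{mc_prop} and the intersection data \eqref{hirz-intersection}. The proof naturally splits into a lower bound (coming from the Mori cone generators) and an upper bound (coming from exhibiting specific invariant curves through $x$), and then these two are shown to coincide under the extra hypothesis $\mu_2\ge\mu_1$.

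\textbf{Lower bound.} First I would establish the left-hand inequalities. Write an arbitrary curve $C\in\mathcal{C}_{\mathcal{E},x}$ in the Mori cone as $C\equiv a_0C_0+a_1C_1+a_2C_2+a_3C_3+a_4C_4$ with $a_i\in\Z_{\ge0}$, so that by \eqref{hirz-intersection} we have $\xi\cdot C=a_0+a_1\mu_1'+a_2\mu_2'+a_3\mu_1+a_4\mu_2$, and push forward via \eqref{hirz-pushforward} to get $\pi_*C\equiv (a_1+a_3)D_1'+\ldots$ expressed in the basis of $\operatorname{Pic}(X_2)$. The key point is that $\operatorname{mult}_x\pi_*C$ is bounded above by a suitable intersection of $\pi_*C$ with an invariant divisor through $x$ (a B\'ezout-type estimate as in the projective-space case), which controls the denominator in Definition \ref{fulm}. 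Using the linear equivalences \eqref{linequiv} and the hypothesis \eqref{key_H} that $\mu_1=\mu_1'$ to identify the relevant minimal slopes, I expect the ratio $\frac{\xi\cdot C}{\operatorname{mult}_x\pi_*C}$ to be bounded below by $\min\{\mu_1,\mu_2,\mu_2'\}$ when $x\in\Gamma_2$ and by $\min\{\mu_1,\mu_2\}$ when $x\notin\Gamma_2$. The case distinction here is governed precisely by property \eqref{curveGam}: when $x\notin\Gamma_2$, any curve through $x$ cannot lie in $D_2'$, which removes the contribution of $\mu_2'$ from the infimum.

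\textbf{Upper bound.} For the right-hand inequalities I would exhibit explicit invariant curves through $x$ and apply \eqref{FMcorp3.21} with Proposition \ref{FMcor3.21}, which gives $\varepsilon(\mathcal{E}|_C,x)=\mu_{\min}(\mathcal{E}|_C)$ for an invariant rational curve $C$. By Proposition \ref{dual-basis} the curves $\Gamma_2$ and $\Gamma_2^{(2)}$ span $\overline{\operatorname{NE}}(X_2)$ and are dual to $D_1,D_2$; concretely, taking $C$ to be an invariant curve numerically equivalent to $D_1$ (available through any $x$) yields $\varepsilon(\mathcal{E},x)\le\mu_1$, giving the bound in the case $x\notin\Gamma_2$. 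When $x\in\Gamma_2$, the point additionally lies on an invariant curve in the class realizing $\mu_2'$, producing the sharper bound $\min\{\mu_1,\mu_2'\}$. The main obstacle I anticipate is bookkeeping: correctly matching each Mori-cone generator $C_j$ to the minimal slope $\mu_j$ or $\mu_j'$ of the corresponding invariant curve, and verifying via \eqref{curveGam} exactly which invariant curves pass through a given $x$ depending on whether $x\in\Gamma_2$.

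\textbf{Exact computation under $\mu_2\ge\mu_1$.} Finally, when $\mu_2\ge\mu_1$, I would observe that $\min\{\mu_1,\mu_2,\mu_2'\}=\min\{\mu_1,\mu_2'\}$ and $\min\{\mu_1,\mu_2\}=\mu_1$, so the lower and upper bounds from the two cases collapse to a single value, forcing equality. Thus $\varepsilon(\mathcal{E},x)=\min\{\mu_1,\mu_2'\}$ for $x\in\Gamma_2$ and $\varepsilon(\mathcal{E},x)=\mu_1$ for $x\notin\Gamma_2$, as claimed. The real work is concentrated in the lower bound, where the interplay between the B\'ezout estimate on $\operatorname{mult}_x\pi_*C$ and the positional hypothesis \eqref{curveGam} on $x$ must be handled carefully.
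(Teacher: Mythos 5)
Your outline does follow the same general strategy as the paper (Mori-cone decomposition plus a B\'ezout estimate for the lower bound, restriction to curves through $x$ for the upper bound, and collapse of the two bounds when $\mu_2\ge\mu_1$), but it has a genuine gap, and it sits exactly where the hypothesis \eqref{key_H} has to do its work. Your upper bound rests on ``taking $C$ to be an invariant curve numerically equivalent to $D_1$ (available through any $x$)''. No such curve exists for general $x$: the only torus-invariant curves on $X_2$ are $D_1,D_1',D_2,D_2'$, and a general point lies on none of them. The curve in the fiber class through $x$ is $\Gamma_2^{(2)}$, which is \emph{not} invariant unless $x\in D_1\cup D_1'$, and numerical equivalence does not determine the splitting type of $\mathcal{E}$ restricted to it --- restrictions of a vector bundle to numerically equivalent curves can jump. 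So you cannot conclude $\varepsilon(\mathcal{E}|_{\Gamma_2^{(2)}},x)=\mu_1$ from Proposition \ref{FMcor3.21} alone. The missing ingredient is precisely the paper's Lemma \ref{H5}: any curve of $\mathbb{P}(\mathcal{E})$ contained in $\mathbb{P}(\mathcal{E}|_{\Gamma_2^{(2)}})$ and pushing forward to $\Gamma_2^{(2)}\equiv D_1'$ must, by the Mori-cone description \eqref{hirz-pushforward2}, have class $a_0C_0+C_1$ or $a_0C_0+C_3$, hence $\xi$-degree $a_0+\mu_1'$ or $a_0+\mu_1$; the hypothesis $\mu_1=\mu_1'$ makes these agree and identifies $\mu_{\min}(\mathcal{E}|_{\Gamma_2^{(2)}})$ with $\mu_1$. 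Your sketch invokes \eqref{key_H} only vaguely (``to identify the relevant minimal slopes'') and has no substitute for this lemma.

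The same omission also affects your lower bound in two ways. First, the B\'ezout estimate $\pi_*C\cdot D_1\ge m$ is only valid when $\pi_*C\not\subset\Gamma_2^{(2)}$; curves with $\pi_*C=\Gamma_2^{(2)}$ must be handled separately, and again the only tool is $\varepsilon(\mathcal{E}|_{\Gamma_2^{(2)}},x)=\mu_{\min}(\mathcal{E}|_{\Gamma_2^{(2)}})=\mu_1$ from Lemma \ref{H5}; your plan never makes this case split. Second, in the case $x\notin\Gamma_2$ your description of the mechanism --- that \eqref{curveGam} ``removes the contribution of $\mu_2'$ from the infimum'' --- is not what actually happens: a curve through $x$ may well have $a_2>0$. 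The paper's argument is that $\pi_*C\not\subset D_2'$ forces $0\le\pi_*C\cdot D_2'=p_2-c_{1,2}p_1$, hence $a_1+a_3\ge c_{1,2}a_2\ge a_2$ (this uses positivity of the Bott number $c_{1,2}\ge 1$, which you never invoke), and combining with B\'ezout $a_2+a_4=p_1\ge m$ gives $a_1+a_3+a_4\ge m$; the term $a_2\mu_2'$ is then discarded as nonnegative, yielding $\xi\cdot C\ge m\min\{\mu_1,\mu_2\}$. Since you explicitly defer this computation (``I expect the ratio \ldots''), the quantitative heart of the lower bound is missing as well.
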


For the rest of the section we prove the above theorem. Let \(x \in X_2\)	and consider \(C \in \mathcal{C}_{\mathcal{E}, x}\). We can write \(C \equiv a_0 C_0+ \ldots +a_4 C_4\) with \(a_0, \ldots, a_4 \in \Z_{\geq 0}\). Then using \eqref{hirz-intersection}we have 
\begin{equation}\label{H1}
	\frac{\xi \cdot C}{m}= \frac{a_0 + (a_1 + a_3) \mu_1 + a_2 \mu_2'+ a_4 \mu_2}{m},
\end{equation}
where \(m={\rm mult}_{x} \pi_*C\). By \eqref{linequiv} and \eqref{hirz-pushforward}, we have
\begin{equation}\label{hirz-pushforward2}
\pi_*C \equiv (a_2+a_4) D_2'+(a_1+ a_3+c_{1,2} a_4) D_1'.
\end{equation}
Set 
\begin{equation}\label{coefficients}
\begin{split}
&p_1=a_2+a_4 \text{ and } p_2=a_1+ a_3+c_{1,2} a_4.
\end{split}
\end{equation}
Since the fibre curves are numerically equivalent, we also have that 
\begin{equation}\label{hirz-equiv}
	D_1 \equiv  D'_1 \equiv \Gamma^{(2)}_2.
\end{equation}
We first prove the following lemma.

\begin{lemma}\label{H5}
	Let $\mathcal{E}$ be an equivariant nef vector bundle of rank \(r\) on \(X_2\) satisfying \eqref{key_H}. Then we have 
	\[\mu_{\text{min}}(\mathcal{E}|_{\Gamma_2^{(2)}})=\mu_1.\]
\end{lemma}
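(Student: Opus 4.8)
The plan is to realize $\Gamma_2^{(2)}$, $D_1$ and $D_1'$ as fibers of the $\mathbb{P}^1$-bundle $\pi_2\colon X_2\to X_1=\mathbb{P}^1$. By \eqref{linequiv} we have $D_1'\sim_{\text{lin}}D_1$, and by \eqref{hirz-equiv} all three curves are numerically equivalent, with $D_1,D_1'$ the two torus-invariant fibers (lying over the fixed points $[1:0],[0:1]$ of $X_1$) and $\Gamma_2^{(2)}=\pi_2^{-1}(\pi_2(x))$ the fiber through $x$. Since $\mathcal{E}$ is nef, each of these restrictions is a nef bundle $\bigoplus_i\mathcal{O}_{\mathbb{P}^1}(a_i)$ with $a_i\ge 0$, so $\mu_{\text{min}}$ is just the least $a_i$, and all three restrictions share the common degree $c_1(\mathcal{E})\cdot D_1$. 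First I would dispose of the case where $\Gamma_2^{(2)}$ is itself invariant, i.e. $\Gamma_2^{(2)}=D_1$ or $D_1'$; then the claim is immediate, using \eqref{key_H} in the second subcase.

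In the remaining case $\Gamma_2^{(2)}$ lies over the open orbit of $X_1$. I would get the lower bound $\mu_{\text{min}}(\mathcal{E}|_{\Gamma_2^{(2)}})\ge\mu_1$ from the behaviour of splitting types in the flat family $\{\mathcal{E}|_{\pi_2^{-1}(y)}\}_{y\in X_1}$: the torus acts transitively on the fibers over the open orbit, so $\mathcal{E}|_{\Gamma_2^{(2)}}$ carries the generic splitting type, which is minimal in the dominance order; hence its least term dominates the least term on the special fiber $D_1$, giving $\mu_{\text{min}}(\mathcal{E}|_{\Gamma_2^{(2)}})\ge\mu_1$. Concretely this is the upper semicontinuity of $y\mapsto h^0\!\big(\pi_2^{-1}(y),\,\mathcal{E}|_{\pi_2^{-1}(y)}(-t)\big)$ for each twist $t$.

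The reverse inequality $\mu_{\text{min}}(\mathcal{E}|_{\Gamma_2^{(2)}})\le\mu_1$ is the heart of the matter, and \eqref{key_H} must enter here in an essential way, since the statement already fails for a bundle that degenerates on only one invariant fiber. My plan is to make the equivariant structure explicit. The residual one-parameter subtorus acting along the fibers of $\pi_2$ fixes the two invariant sections $D_2=\mathbb{V}(w_2)$ and $D_2'=\mathbb{V}(z_2)$ pointwise, so its weight decompositions of $\mathcal{E}|_{D_2}$ and $\mathcal{E}|_{D_2'}$ have locally constant ranks; consequently every fiber $F$ (invariant or not) meets $D_2,D_2'$ in points carrying the same two multisets of weights, and $\mathcal{E}|_F=\bigoplus_i\mathcal{O}(a_i)$ is recovered from these weights together with a matching between them, the $a_i$ being the matched differences. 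The generic fiber realizes the matching that maximizes the least difference, whereas on $D_1$ (resp. $D_1'$) the matching is pinned down by the Klyachko characters $\mathbf{u}(\sigma)$ of the maximal cones meeting $D_1$ (resp. $D_1'$), computed as in Section \ref{subsec5}. I would therefore compute $\mathcal{E}|_{D_1}$ and $\mathcal{E}|_{D_1'}$ from these characters, express $\mu_1,\mu_1'$ and the generic least slope purely in terms of the weight data, and then show that $\mu_1=\mu_1'$ forces both invariant matchings to agree with the generic one. The main obstacle is precisely this rigidity step, namely ruling out a common downward jump of the splitting type across both $D_1$ and $D_1'$; this is exactly where the hypothesis $\mu_1=\mu_1'$ has to be combined with the nefness of $\mathcal{E}$ (which bounds all $a_i\ge 0$), and I expect it to be the delicate combinatorial core of the proof.
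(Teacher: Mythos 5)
Your reduction to fibers of $\pi_2\colon X_2\to X_1$, your treatment of the case where $\Gamma_2^{(2)}$ is one of the invariant fibers, and your semicontinuity argument for the inequality $\mu_{\text{min}}(\mathcal{E}|_{\Gamma_2^{(2)}})\ge\mu_1$ are all correct; this much agrees in substance with the paper, whose Mori-cone computation (every curve $C\subset\mathbb{P}(\mathcal{E}|_{\Gamma_2^{(2)}})$ with $\pi_*C=\Gamma_2^{(2)}$ has class $a_0C_0+C_1$ or $a_0C_0+C_3$, hence $\xi\cdot C=a_0+\mu_1$ with $a_0\ge0$) is just another form of the same lower bound. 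The genuine gap is the reverse inequality $\mu_{\text{min}}(\mathcal{E}|_{\Gamma_2^{(2)}})\le\mu_1$: you outline a strategy (matching the fiberwise torus weights along $D_2$ and $D_2'$, and showing that $\mu_1=\mu_1'$ forces the invariant matchings to agree with the generic one), but you never prove this ``rigidity step''; you explicitly defer it as the delicate core. So the proposal, as written, establishes only one of the two inequalities.

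Moreover, this gap cannot be closed: the rigidity statement, and with it Lemma \ref{H5} for $x\notin D_1\cup D_1'$, is false. Take $E=\C^2$ and lines $L_1,L_2,L_3,L_4$ with $L_2\neq L_4$ and $L_1,L_3\notin\{L_2,L_4\}$, and set $E^{v_j}(i)=E$ for $i\le0$, $=L_j$ for $i=1$, $=0$ for $i\ge2$; compatibility \eqref{KCC} is automatic in rank $2$, so this defines an equivariant bundle $\mathcal{E}$ on $X_2$ (it is Example \ref{hirz-ex} with the trivial filtration at $v_4$ replaced by a fourth line). On the open toric subvariety $X_2\setminus(D_1\cup D_1')\cong\C^*\times\mathbb{P}^1$, whose fan has only the rays $v_2,v_4$, the bundle is determined by the pair $(E^{v_2},E^{v_4})$ alone; the common splitting $E=L_2\oplus L_4$ has jumps $(1,0)$ and $(0,1)$, so $\mathcal{E}|_F\cong\mathcal{O}_{\mathbb{P}^1}(1)^{\oplus2}$ for \emph{every} non-invariant fiber $F$. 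By contrast, the recipe of Subsection \ref{subsec5} gives $\mathcal{E}|_{D_1}\cong\mathcal{E}|_{D_1'}\cong\mathcal{O}_{\mathbb{P}^1}\oplus\mathcal{O}_{\mathbb{P}^1}(2)$: for $D_1'$, for instance, $\mathbf{u}(\sigma_{1,2})=\{(1,0),(0,1)\}$ and $\mathbf{u}(\sigma_{1,4})=\{(1,0),(0,-1)\}$, giving summands $\mathcal{O}$ and $\mathcal{O}(2)$. One also finds $\mathcal{E}|_{D_2'}=\mathcal{O}(-c_{1,2})\oplus\mathcal{O}(2)$ and $\mathcal{E}|_{D_2}=\mathcal{O}(2)\oplus\mathcal{O}(c_{1,2})$, so twisting by $\mathcal{O}(c_{1,2}D_1)$ (which changes no fiber restriction) makes all four invariant restrictions nef, hence the twisted bundle is nef by Proposition \ref{HMPT2.1}. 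It satisfies \eqref{key_H} with $\mu_1=\mu_1'=0$, yet $\mu_{\text{min}}$ of its restriction to $\Gamma_2^{(2)}$ equals $1\neq\mu_1$ whenever $x\notin D_1\cup D_1'$.

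You should also be aware that the paper's own proof does not supply the missing step: after showing each competitor has $\xi\cdot C=a_0+\mu_1$, it concludes equality, which would require a competitor with $a_0=0$; producing such a curve is exactly equivalent to the inequality $\mu_{\text{min}}(\mathcal{E}|_{\Gamma_2^{(2)}})\le\mu_1$ that is to be proved. In the example above the minimal section of $\mathbb{P}(\mathcal{E}|_{\Gamma_2^{(2)}})$ has class $C_0+C_1$, i.e.\ $a_0=1$, and the claimed equality fails. So your instinct about where the difficulty lies is exactly right: it is not a technicality you omitted, but a genuine and, as far as I can see, unbridgeable gap, affecting Lemma \ref{H5} itself and the results built on it.
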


\begin{proof}
	First note that 
	\begin{equation}\label{H5.1}
		\begin{split}
			\mu_{\text{min}}(\mathcal{E}|_{\Gamma_2^{(2)}})=& \, \varepsilon(\mathcal{E}|_{\Gamma_2^{(2)}},x) \ (\text{by Proposition \ref{FMcor3.21}})\\
			=& \inf\limits_{\substack{C \in \mathcal{C}_{\mathcal{E}, x}\\ C \subseteq \mathbb{P}(\mathcal{E}|_{\Gamma_2^{(2)}})}} \frac{\xi|_{\mathbb{P}(\mathcal{E}|_{\Gamma_2^{(2)}})} \cdot C}{{\rm mult}_{x} \pi_*C}  \\
			=&  \inf\limits_{\substack{C \in \mathcal{C}_{\mathcal{E}, x}\\ C \subseteq \mathbb{P}(\mathcal{E}|_{\Gamma_2^{(2)}})}} \xi \cdot C \ (\text{as } \pi_*C=\Gamma_2^{(2)}).
		\end{split}
	\end{equation}
	
	Let \(C \in  \mathcal{C}_{\mathcal{E}, x}\) and write \(C \, \equiv \, a_0 C_0+ \, \ldots \, + a_4 C_4\) for some nonnegative integers \(a_0, \, \ldots, \, a_4 \). Moreover, if \( C \subseteq \mathbb{P}(\mathcal{E}|_{\Gamma_2^{(2)}})\) then we have 
	\[\pi_*C=\Gamma_2^{(2)} \equiv D_1'.\] Thus from \eqref{hirz-pushforward2}, we get 
	\[a_2+a_4=0 \text{ and } a_1+a_3+ c_{1,2} \, a_4=1.\] 
	Hence \(C\) will be of the following forms
	\begin{equation*}\label{H7}
		C \equiv a_0 C_0 + C_1, \text{ or }C \equiv a_0 C_0 + C_3.
	\end{equation*}
		However, in both of the cases, we have \[\xi \cdot C=a_0+\mu_1\] as \(\mu_1=\mu'_1\) by \eqref{key_H}. Hence from \eqref{H5.1}, we get \(\mu_{\text{min}}(\mathcal{E}|_{\Gamma_2^{(2)}})=\mu_1\). 
\end{proof}
\begin{prop}\label{H2}
	With the notation as in Theorem \ref{SC_H}, for all \(x \in X_2\), we have 
	\begin{equation*}
		\text{min}\{\mu_1, \, \mu_2,\, \mu'_2\} \leq \varepsilon(\mathcal{E},x) \leq \mu_1.
	\end{equation*}
\end{prop}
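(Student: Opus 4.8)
The plan is to establish the two bounds separately, exploiting the curves $C_0, \ldots, C_4$ that span the Mori cone together with the general lower bound on Seshadri constants coming from \eqref{H1} and the B\'ezout-type argument, and the upper bound coming from the restriction formula \eqref{FMcorp3.21}. First I would prove the lower bound $\varepsilon(\mathcal{E}, x) \geq \min\{\mu_1, \mu_2, \mu_2'\}$ for every $x \in X_2$. Starting from \eqref{H1}, I estimate
\[
\frac{\xi \cdot C}{m} = \frac{a_0 + (a_1 + a_3)\mu_1 + a_2 \mu_2' + a_4 \mu_2}{m} \geq \frac{a_0 + (a_1 + a_2 + a_3 + a_4)\min\{\mu_1, \mu_2, \mu_2'\}}{m},
\]
so it suffices to show that $a_1 + a_2 + a_3 + a_4 \geq m = {\rm mult}_x \pi_* C$. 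Using \eqref{hirz-pushforward2} and \eqref{coefficients}, we have $\pi_* C \equiv p_1 D_2' + p_2 D_1'$, and since the Bott number $c_{1,2}$ is positive, $a_1 + a_2 + a_3 + a_4 \geq a_1 + a_2 + a_3 + c_{1,2} a_4 \geq p_1 + p_2$ fails directly but $p_1 + p_2 = a_1 + a_2 + a_3 + (1 + c_{1,2}) a_4$; so I would instead bound $\mathrm{mult}_x \pi_*C$ above by an appropriate intersection number. Choosing a nef divisor through $x$ that does not contain $\pi_* C$ and applying B\'ezout (as in the $\mathbb{P}^n$ case) should yield $m \leq p_1 + p_2$ or a comparable inequality; I then verify $a_1 + a_2 + a_3 + a_4 \geq p_1 + p_2$ or handle the $c_{1,2} a_4$ discrepancy by absorbing it into the $\mu_2$ term, which is legitimate since $a_4$ is weighted by $\mu_2 \geq \min\{\mu_1, \mu_2, \mu_2'\}$.

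For the upper bound $\varepsilon(\mathcal{E}, x) \leq \mu_1$, I would use \eqref{FMcorp3.21}, which gives $\varepsilon(\mathcal{E}, x) \leq \varepsilon(\mathcal{E}|_C, x)$ for any curve $C$ through $x$. By Proposition \ref{FMcor3.21}, $\varepsilon(\mathcal{E}|_C, x) = \mu_{\mathrm{min}}(\mathcal{E}|_C)$. The natural candidate is the fibre curve $\Gamma_2^{(2)}$ passing through $x$, which is numerically equivalent to $D_1 \equiv D_1'$ by \eqref{hirz-equiv}. Invoking Lemma \ref{H5}, which under hypothesis \eqref{key_H} gives $\mu_{\mathrm{min}}(\mathcal{E}|_{\Gamma_2^{(2)}}) = \mu_1$, I obtain $\varepsilon(\mathcal{E}, x) \leq \mu_1$. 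The only subtlety here is to confirm that a curve numerically equivalent to $\Gamma_2^{(2)}$ and passing through the chosen $x$ actually exists for every $x \in X_2$; this follows from the fibration structure of the Hirzebruch surface, since every point lies on a fibre of the projection $X_2 \to X_1$.

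The step I expect to be the main obstacle is the bookkeeping in the lower bound, specifically reconciling the coefficient $a_1 + a_2 + a_3 + a_4$ appearing in the slope estimate with the multiplicity bound $m \leq p_1 + p_2 = a_1 + a_2 + a_3 + (1 + c_{1,2}) a_4$ coming from B\'ezout. Because $c_{1,2} > 0$, the multiplicity bound involves a strictly larger combination of the $a_i$ than the slope estimate, so a naive comparison does not immediately close the argument. The resolution should exploit that the extra $a_4$ terms are weighted by $\mu_2$ in the numerator: I would split the numerator of \eqref{H1} so that the $a_4 \mu_2$ contribution covers the surplus $c_{1,2} a_4$ in the multiplicity, using $\mu_2 \geq \min\{\mu_1, \mu_2, \mu_2'\}$ and $\mu_1 \geq 0$ (nefness). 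Carrying this out carefully — tracking exactly which $a_i$ are controlled by $p_1$ versus $p_2$ and ensuring the inequality survives — is the technical heart of the proof, after which the stated two-sided bound follows by combining with the upper bound above.
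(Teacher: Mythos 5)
Your upper bound argument is correct and is exactly the paper's: $x\in\Gamma_2^{(2)}$ by construction, and \eqref{FMcorp3.21}, Proposition \ref{FMcor3.21} and Lemma \ref{H5} give $\varepsilon(\mathcal{E},x)\le\mu_{\text{min}}(\mathcal{E}|_{\Gamma_2^{(2)}})=\mu_1$.

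The lower bound, however, has a genuine gap, sitting precisely at the step you flag as the ``technical heart.'' You want $a_1+a_2+a_3+a_4\ge m$, and you propose to get it from B\'ezout against a nef divisor through $x$, which yields $m\le p_2$ or $m\le p_1+p_2=a_1+a_2+a_3+(1+c_{1,2})a_4$, and then to absorb the surplus $c_{1,2}a_4$ into the term $a_4\mu_2$. That absorption requires $\mu_2\ge(1+c_{1,2})\min\{\mu_1,\mu_2,\mu_2'\}$ (or $\mu_2\ge c_{1,2}\min\{\cdot\}$ in the $m\le p_2$ variant) whenever $a_4>0$. This inequality is not among the hypotheses of Proposition \ref{H2} --- the only assumption is \eqref{key_H}, i.e. $\mu_1=\mu_1'$ --- and it fails, for instance, whenever $\mu_1=\mu_2=\mu_2'>0$ and $c_{1,2}\ge1$. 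The failure is visible already on the extremal curve $C=C_4$ with $x\in D_2$: there $\xi\cdot C=\mu_2$ and $m=1$, but $\pi_*C\equiv\Gamma_2+c_{1,2}\Gamma_2^{(2)}$, so your B\'ezout bound only gives $m\le p_1+p_2=1+c_{1,2}$, and the numerator $\mu_2$ cannot cover $(1+c_{1,2})\min\{\mu_1,\mu_2,\mu_2'\}$ in general. Moreover, for $c_{1,2}\ge2$ no integral nef class repairs this: writing $H=\alpha D_1+\beta D_2$, the intersection $\pi_*C\cdot H=\alpha p_1+\beta p_2$ is dominated by $a_1+a_2+a_3+a_4$ for all $a_i\ge0$ only when $\beta=0$, i.e. only for the fibre class $D_1$.

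This is why the case distinction in the paper's proof is unavoidable, and it is the idea missing from your proposal. Taking $H$ to be the fibre $\Gamma_2^{(2)}\equiv D_1$ through $x$, B\'ezout gives $m\le\pi_*C\cdot D_1=p_1=a_2+a_4$, with no $c_{1,2}$ discrepancy --- but only under the hypothesis $\pi_*C\not\subset\Gamma_2^{(2)}$, since the fibre through $x$ is unique. In that case \eqref{H1} gives $\xi\cdot C\ge a_2\mu_2'+a_4\mu_2\ge m\min\{\mu_2,\mu_2'\}$, the $\mu_1$-terms being simply discarded. In the remaining case $\pi_*C\subset\Gamma_2^{(2)}$, a separate argument is required: the paper bounds $\xi\cdot C/m\ge\varepsilon(\mathcal{E}|_{\Gamma_2^{(2)}},x)=\mu_1$ using Lemma \ref{H5} (equivalently, one can note that $p_1=0$ forces $a_2=a_4=0$ and $m=p_2=a_1+a_3$, so \eqref{H1} gives $\xi\cdot C/m\ge\mu_1$ directly); this is where \eqref{key_H} enters the lower bound. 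Without splitting into these two cases, the estimate does not close.
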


\begin{proof}
By construction, we have that \(x \in \Gamma_2^{(2)}\). Then, using \eqref{FMcorp3.21} together with Proposition \ref{FMcor3.21}, we have that 
\begin{equation*}\label{hirz-max}
\varepsilon(\mathcal{E},x) \leq \mu_{\text{min}}(\mathcal{E}|_{\Gamma_2^{(2)}}) .
\end{equation*}
Combining this with Lemma \ref{H5}, we get
\begin{equation}\label{hirz-max1}
	\varepsilon(\mathcal{E},x) \leq \mu_1 .
\end{equation}

Now suppose that $\pi_* C \,\not\subset\, \Gamma_2^{(2)}$. Then, using \eqref{hirz-equiv},  we have 
	$$\pi_*C \cdot D_1 \,=\, \pi_* C \cdot \Gamma_2^{(2)} \,\ge\, m (\text{mult}_x\Gamma_2^{(2)}) \,\ge\, m,$$ by B\'ezout's
	theorem. Again, by \eqref{hirz-equiv} together with Proposition \ref{dual-basis}, we have  
	$$\pi_* C \cdot D_1 \,=\, p_1.$$
	So $p_1 \,\ge\, m$. Thus from \eqref{H1}, we get 
	\begin{equation}\label{hirz_ineq1}
		\begin{split}
				\frac{\xi \cdot C}{m} & = \frac{a_0 + (a_1 + a_3) \mu_1 + a_2 \mu_2'+ a_4 \mu_2}{m}\\
				& \geq \frac{a_0 + (a_1 + a_3) \mu_1 +( a_2 + a_4 )\text{min}\{\mu_2,\, \mu'_2\}}{m}\\
				& \geq \text{min}\{\mu_2,\, \mu'_2\}.
		\end{split}
	\end{equation}
Next suppose that $\pi_* C\,\subset\, \Gamma_2^{(2)}$. Then from the definition of 
$\varepsilon(\mathcal{E}|_{\Gamma_2^{(2)}},x)$ it follows that 

\begin{equation}\label{hirz-min}
\frac{\xi\cdot C}{m} \,=\, \frac{\xi|_{\mathbb{P}(\mathcal{E}|_{\Gamma_2^{(2)}})} \cdot C}{m} \,\ge\, \varepsilon(\mathcal{E}|_{\Gamma_2^{(2)}},x)=\mu_1,
\end{equation}
where the last equality follows from Lemma \ref{H5}. Consequently, from \eqref{hirz_ineq1} and \eqref{hirz-min}, we have $$\frac{\xi \cdot C}{m} \,\ge\,
\text{min}\{\mu_1, \, \mu_2,\, \mu'_2\}$$ for all irreducible
and reduced curves $C \,\in\, \mathcal{C}_{\mathcal{E}, x}$. Hence the proposition follows. 
\end{proof}

\begin{proof}[Proof of Theorem \ref{SC_H}] We consider the following two cases.
	
\noindent	
\textbf{ Case 1}: \(x \in \Gamma_2\). 
We have $D'_2 = \Gamma_2$ (see Subsection \ref{qc}). In this case, by \eqref{FMcorp3.21}, we have 
\begin{equation*}
\varepsilon(\mathcal{E},x) \leq \mu'_2.
\end{equation*}
Thus, from \eqref{hirz-max1}, we have that
\begin{equation*}
	\varepsilon(\mathcal{E},x) \leq \text{min}\{\mu_1, \, \mu'_2\}.
\end{equation*}
Combining this with Proposition \ref{H2}, the theorem follows for \(x \in \Gamma_2\).

\noindent
\textbf{ Case 2}: \(x \notin \Gamma_2\).

Note that $x \,\in\, \Gamma_2^{(2)} \,=\, X_2^{(2)}$. Hence 
$x \,\in\,\Gamma_2^{(2)} \setminus \Gamma_2$. Recall from \eqref{hirz-pushforward2} and \eqref{coefficients} that $\pi_* C \,\equiv \, p_1\Gamma_2+p_2\Gamma_2^{(2)} \,\subset\, X_2$ is an irreducible
and reduced curve such that $m\,:=\,{\rm mult}_x\pi_* C \,>\, 0$. Thus we have  
$\pi_*C \,\not\subset\, D_2'$ by \eqref{curveGam}. 
Now, by \eqref{linequiv}, $$D_2' \,\sim_{\text{lin}}\, D_2-c_{1,2}D_1.$$  Hence  
$$0 \,\le\, \pi_*C\cdot D_2' \,=\, \pi_*C\cdot(D_2-c_{1,2}D_1) \,=\, p_2-c_{1,2}p_1\, .$$ So $$p_2
\,\ge\, c_{1,2} \, p_1.$$ 
Plugging the values of \(p_1\) and \(p_2\) from \eqref{coefficients}, we have
\begin{equation}\label{hirz-inequality}
\begin{split}
&a_1+a_3+c_{1,2} \, a_4 \geq c_{1,2} \, (a_2+a_4)\\
&\Rightarrow a_1+a_3 \geq c_{1,2} \, a_2 \geq a_2.
\end{split}
\end{equation}
 The last inequality follows since \(c_{1,2} \geq 1\).

Now suppose that $\pi_*C \,\ne \, \Gamma_2^{(2)} \,=\, X_2^{(2)}$. Then 
B\'ezout's theorem and \eqref{hirz-equiv} together give that
$$m \le \pi_*C \cdot X_2^{(2)} = \pi_*C \cdot D_1 = p_1=a_2+a_4.$$ Thus using \eqref{hirz-inequality}, we have \[a_1+a_3+a_4 \geq m.\] Hence from \eqref{H1}, we obtain 
\begin{equation}\label{H3}
	\frac{\xi \cdot C}{m} \geq \text{min}\{\mu_1, \, \mu_2\}.
\end{equation}
If \(\pi_* C=\Gamma_2^{(2)}\), then from \eqref{hirz-min}, we see that 
\begin{equation}\label{H4}
	\frac{\xi \cdot C}{m} \geq \mu_1 .
\end{equation}
Hence from \eqref{H3} and \eqref{H4}, we get 
\begin{equation*}\label{hirz_ineq2}
		\varepsilon(\mathcal{E},x) \geq \text{min}\{\mu_1, \, \mu_2\}.
\end{equation*}
Thus the proof follows from Proposition \ref{H2}.
\end{proof}

\begin{ex}{\rm 
Consider the tangent bundle \(\mathcal{E}=\mathscr{T}_{ X_2}\)	on the Hirzebruch surface $X_2$. Then the associated filtrations $(E, \{E^{i}(j)\}_{i=1, \ldots, 4; \, j \in \Z})$ are given by:\\
\[ E^{i}(j) = \left\{ \begin{array}
	{r@{\quad \quad}l}
	{\C^2} & j \leqslant 0 \\ 
	\text{Span }( v_{i} ) & j=1 \\
	0 & j > 1
\end{array} \right. ,\]
(see \cite[Examples 2.3 (5)]{Kly}). Let $\sigma_{i,j}$ denote the maximal cone generated by the primitive vectors \(v_i, v_j\) in the fan $\Delta_2$ of \(X_2\). On each maximal cone, using Klyachko's compatibility condition \eqref{KCC}, the associated characters are given by 
\begin{equation*}\label{H6}
	\begin{split}
&		\mathbf{u}(\sigma_{1,2})=\{(1,0), (0,1)\}, \mathbf{u}(\sigma_{2,3})=\{(c_{1,2},1), (-1,0)\}, \\
&		\mathbf{u}(\sigma_{3,4})=\{(-1,0), (-c_{1,2},-1)\},\mathbf{u}(\sigma_{1,4})=\{(1,0), (0,-1)\}.
	\end{split} 
\end{equation*}
Then restriction of $\mathcal{E}$ to the invariant curves are given by (see Section \ref{subsec5})
\begin{equation*}\label{H8}
	\begin{split}
	&	\mathcal{E}|_{D'_1}=\mathcal{O}_{\mathbb{P}^1} \oplus \mathcal{O}_{\mathbb{P}^1} (2), \,	\mathcal{E}|_{D'_2}=\mathcal{O}_{\mathbb{P}^1}(-c_{1,2}) \oplus \mathcal{O}_{\mathbb{P}^1} (2), \\
	& 	\mathcal{E}|_{D_1}=\mathcal{O}_{\mathbb{P}^1} \oplus \mathcal{O}_{\mathbb{P}^1} (2),\, 	\mathcal{E}|_{D_2}=\mathcal{O}_{\mathbb{P}^1}(c_{1,2}) \oplus \mathcal{O}_{\mathbb{P}^1} (2)
	\end{split}
\end{equation*}(see Subsection \ref{subsec5}). By Proposition \ref{HMPT2.1}, $\mathcal{E}$ is not nef. 
\begin{comment}
Let \(D=c_{1,2} D'_1\). Then 
\begin{equation*}
D \cdot D_1'=D \cdot D_1=0 \text{ for } \text{ and } D \cdot D'_2=D \cdot D_2=c_{1,2}.
\end{equation*}
Thus restriction of $\mathcal{E}\otimes \mathcal{O}(D)$ to the \(T\)-invariant curves are given by
\begin{equation}\label{H9}
\begin{split}
&	(\mathcal{E}\otimes \mathcal{O}(D))|_{D'_1}=\mathcal{O}_{\mathbb{P}^1} \oplus \mathcal{O}_{\mathbb{P}^1} (2), 	(\mathcal{E}\otimes \mathcal{O}(D))|_{D'_2}=\mathcal{O}_{\mathbb{P}^1} \oplus \mathcal{O}_{\mathbb{P}^1} (2+c_{1,2}), \\
& 	(\mathcal{E}\otimes \mathcal{O}(D))|_{D_1}=\mathcal{O}_{\mathbb{P}^1} \oplus \mathcal{O}_{\mathbb{P}^1} (2), 	(\mathcal{E}\otimes \mathcal{O}(D))|_{D_2}=\mathcal{O}_{\mathbb{P}^1}(2c_{1,2}) \oplus \mathcal{O}_{\mathbb{P}^1} (2+c_{1,2}).
\end{split}
\end{equation}
Thus \(\mathcal{E}\otimes \mathcal{O}(D)\) is nef and 
\begin{equation*}
\mu_1'=0=\mu_1, \, \mu_2 \geq \mu_2'=0.   
\end{equation*}
So \(\mathcal{E}\otimes \mathcal{O}(D)\) satisfies the condition of Theorem \ref{SC_H} and for \(x \in X_2\) the Seshadri constant is given by 
\[\varepsilon(\mathcal{E}\otimes \mathcal{O}(D),x) =0.\]

More generally, 
\end{comment}
Consider \(D=a_1 D_1 + a_2 D_2 \) with \(a_1 \geq c_{1,2}, \, a_2 \geq 0\). Then 
\begin{equation*}
D \cdot D_1'=D \cdot D_1= a_2, \,  D \cdot D'_2= a_1 \text{ and } D \cdot D_2=a_1+ c_{1,2} \, a_2.
\end{equation*}
Thus restrictions of $\mathcal{E}\otimes \mathcal{O}(D)$ to the invariant curves are given by
\begin{equation*}\label{H10}
\begin{split}
	&	(\mathcal{E}\otimes \mathcal{O}(D))|_{D'_1}=\mathcal{O}_{\mathbb{P}^1}(a_2) \oplus \mathcal{O}_{\mathbb{P}^1} (2+a_2), \\
	&	(\mathcal{E}\otimes \mathcal{O}(D))|_{D'_2}=\mathcal{O}_{\mathbb{P}^1}(a_1-c_{1,2}) \oplus \mathcal{O}_{\mathbb{P}^1} (2+a_1), \\
	& 	(\mathcal{E}\otimes \mathcal{O}(D))|_{D_1}=\mathcal{O}_{\mathbb{P}^1}(a_2) \oplus \mathcal{O}_{\mathbb{P}^1} (2+a_2), \\
	&	(\mathcal{E}\otimes \mathcal{O}(D))|_{D_2}=\mathcal{O}_{\mathbb{P}^1}(a_1+c_{1,2} \, a_2+c_{1,2}) \oplus \mathcal{O}_{\mathbb{P}^1} (a_1+c_{1,2} \, a_2+2).
\end{split}
\end{equation*}
This shows that \(\mathcal{E}\otimes \mathcal{O}(D)\) is nef (by Proposition \ref{HMPT2.1}). We also have that
\begin{equation*}
	\mu_1'=a_2=\mu_1, \, \mu_2'=a_1-c_{1,2}, \mu_2 \geq a_1+c_{1,2} \, a_2.   
\end{equation*}
So \(\mathcal{E}\otimes \mathcal{O}(D)\) satisfies the conditions of Theorem \ref{SC_H}, hence for \(x \in X_2\) the Seshadri constant is given by 
\begin{equation*}
	\varepsilon(\mathcal{E}\otimes \mathcal{O}(D),x) =
	\begin{cases}
		\text{min}\{a_1-c_{1,2},\, a_2\}, & {\rm if}\ x\in \Gamma_2, \\
		a_2,  & {\rm if}\ x\notin \Gamma_2.
	\end{cases}
\end{equation*} 
Note that the discriminant of \(\mathcal{E}\otimes \mathcal{O}(D)\) is non-zero. Also for \(c_{1,2} \geq 2\), this bundle is unstable with respect to the anticanonical divisor and it is stable when \(c_{1,2}=1\) (see \cite[Corollary 4.2.7]{DDK}).
}

\end{ex}

\begin{ex}\label{hirz-ex}{\rm  
	Consider the vector space \(E=\C^2\) and three distinct one dimensional subspaces \(L_1, L_2\) and \(L_{3}\) in \(E \). Now define the filtrations \( \left( E, \{ E^{v_j}(i) \}_{j=1, 2, 3} \right) \)  as follows:
	
	$
	E^{v_j}(i) = \left\{ \begin{array}{ccc}
		
		E& i \leqslant 0 \\
		
		L_j & i =1 \\ 
		
		0 & i > 1
	\end{array} \right. 
	$
	for \(j=1, 2, 3\) 
	and 
	$
	E^{v_4}(i) = \left\{ \begin{array}{ccc}
		
		E &  i \leq 0 \\
		
		0 &  i > 0
	\end{array} \right.
	$
.
	
	Hence, the filtrations \( \left( E, \{ E^{v_j}(i) \}_{j=1, \ldots, 4} \right) \) correspond to a rank \(2\) equivariant indecomposable vector bundle on \(X_2\), say \(\mathcal{E}\) (see \cite[Proposition 6.1.1]{DDK}). 
	
	As before, let $\sigma_{i,j}$ denote the maximal cone generated by the primitive vectors \(v_i, v_j\) in the fan $\Delta_2$ of \(X_2\). On each maximal cone, using Klyachko's compatibility condition \eqref{KCC}, the associated characters are given by 
	\begin{equation*}\label{H6.1}
		\begin{split}
			&		\mathbf{u}(\sigma_{1,2})=\{(1,0), (0,1)\}, \mathbf{u}(\sigma_{2,3})=\{(c_{1,2},1), (-1,0)\}, \\
			&		\mathbf{u}(\sigma_{3,4})=\{(-1,0), (0,0)\},\mathbf{u}(\sigma_{1,4})=\{(1,0), (0,0)\}.
		\end{split} 
	\end{equation*}
Then following Subsection \ref{subsec5}, restrictions of $\mathcal{E}$ to the invariant curves are given by
	\begin{equation*}\label{H8.1}
		\begin{split}
			&	\mathcal{E}|_{D'_1}=\mathcal{O}_{\mathbb{P}^1} \oplus \mathcal{O}_{\mathbb{P}^1} (1), 	\mathcal{E}|_{D'_2}=\mathcal{O}_{\mathbb{P}^1}(-c_{1,2}) \oplus \mathcal{O}_{\mathbb{P}^1} (2), \\
			& 	\mathcal{E}|_{D_1}=\mathcal{O}_{\mathbb{P}^1} \oplus \mathcal{O}_{\mathbb{P}^1} (1), 	\mathcal{E}|_{D_2}=\mathcal{O}_{\mathbb{P}^1} \oplus \mathcal{O}_{\mathbb{P}^1} (2).
		\end{split}
	\end{equation*}
	As before, $\mathcal{E}$ is not nef (by Proposition \ref{HMPT2.1}). Consider \(D=a_1 D_1 + a_2 D_2 \) with \(a_1 \geq c_{1,2}, \, a_2 \geq 0\). Then 
	\begin{equation*}
		D \cdot D_1'=D \cdot D_1= a_2, \,  D \cdot D'_2= a_1 \text{ and } D \cdot D_2=a_1+ c_{1,2} \, a_2.
	\end{equation*}
	Thus restrictions of $\mathcal{E}\otimes \mathcal{O}(D)$ to the invariant curves are given by
	\begin{equation*}\label{H10}
		\begin{split}
			&	(\mathcal{E}\otimes \mathcal{O}(D))|_{D'_1}=\mathcal{O}_{\mathbb{P}^1}(a_2) \oplus \mathcal{O}_{\mathbb{P}^1} (1+a_2), \\
			&	(\mathcal{E}\otimes \mathcal{O}(D))|_{D'_2}=\mathcal{O}_{\mathbb{P}^1}(a_1+2) \oplus \mathcal{O}_{\mathbb{P}^1} (a_1-c_{1,2}), \\
			& 	(\mathcal{E}\otimes \mathcal{O}(D))|_{D_1}=\mathcal{O}_{\mathbb{P}^1}(a_2) \oplus \mathcal{O}_{\mathbb{P}^1} (1+a_2), \\
			&	(\mathcal{E}\otimes \mathcal{O}(D))|_{D_2}=\mathcal{O}_{\mathbb{P}^1}(a_1+c_{1,2} \, a_2) \oplus \mathcal{O}_{\mathbb{P}^1} (a_1+c_{1,2} \, a_2+2).
		\end{split}
	\end{equation*}
	This implies that \(\mathcal{E}\otimes \mathcal{O}(D)\) is nef (by Proposition \ref{HMPT2.1}). Also, note that
	\begin{equation*}
		\mu_1'=a_2=\mu_1, \, \mu_2'=a_1-c_{1,2}, \mu_2 = a_1+c_{1,2} \, a_2.   
	\end{equation*}
	So \(\mathcal{E}\otimes \mathcal{O}(D)\) satisfies the conditions of Theorem \ref{SC_H}. Hence for \(x \in X_2\), the Seshadri constant is given by 
	\begin{equation*}
		\varepsilon(\mathcal{E}\otimes \mathcal{O}(D),x) =
		\begin{cases}
			\text{min}\{a_1-c_{1,2},\, a_2\}, & {\rm if}\ x\in \Gamma_2, \\
			a_2,  & {\rm if}\ x\notin \Gamma_2.
		\end{cases}
	\end{equation*} 
Note that for a suitable choice of polarization on \(X_2\), the vector bundle \(\mathcal{E}\) becomes stable (see \cite[Proposition 6.1.5]{DDK}).  If discriminant of $\mathcal{E}$ was zero then by \cite[Theorem 2.5]{bruzzo}, we can conclude that $(\mathcal{E}\otimes \mathcal{O}(D))|_{D'_1}$ is semistable, which is not the case. So discriminant of $\mathcal{E}$ is non-zero.

}

\end{ex}

\begin{rmk}{\rm
We would like to mention that there are examples of equivariant ample vector bundles which do not satisfy the assumption \ref{key_H}. One such example can be found in \cite[Example 5.5]{DJS}. 	}
\end{rmk}
\section{Seshadri constants of equivariant vector bundles on \(X_3\)}\label{X3}

Consider the third stage of Bott tower \(X_3\). There are 12 invariant curves in \(X_3\) which are listed below:
\begin{equation} \label{inv_curves 3}
	\begin{split}
		 & l_1:=D_1' \cap D_2',\, l_2:=D_1' \cap D_3',\, l_3:=D_1' \cap D_2,\, l_4:=D_1' \cap D_3, \\
		 & l_5:=D_2' \cap D_3',\, l_6:=D_2' \cap D_1,\, l_7:=D_2' \cap D_3,\, l_8:=D_3' \cap D_1, \\
		 & l_9:=D_3' \cap D_2,\, l_{10}:=D_1 \cap D_2,\, l_{11}:=D_1 \cap D_3,\, l_{12}:=D_2 \cap D_3.
	\end{split}
\end{equation}
Recall from Proposition \ref{dual-basis} that a basis of the Mori cone $\overline{\rm{NE}}(X_3)$ is given by the curves $\Gamma_3,\, \Gamma_3^{(2)},\,\Gamma_3^{(3)}$. The invariant curves listed in \eqref{inv_curves 3} can be written in terms of the generators of the Mori cone as follows:
\begin{equation}\label{X lin_equiv}
	\begin{split}
		& l_1\,\equiv\, l_3\,\equiv\,l_6\,\equiv\, l_{10}\,\equiv\,\Gamma_3^{(3)},\\
		& l_2\,\equiv\, l_8\,\equiv\,\Gamma_3^{(2)},\\
		& l_4\,\equiv\, l_{11}\,\equiv \, c_{2,3} \, \Gamma_3^{(2)},\\
		& l_5 \,\equiv \, \Gamma_3, \,\\
		&  l_7  \,\equiv \, \Gamma_3 + c_{1,3} \, \Gamma_3^{(3)}, \\
		& l_9 \,\equiv \, \Gamma_3 + c_{1,2} \, \Gamma_3^{(2)}, \\
		& l_{12}  \,\equiv \, \Gamma_3 + c_{1,2} \, \Gamma_3^{(2)}+(c_{1,3} + c_{1,2} \, c_{2, 3} )\, \Gamma_3^{(3)}. 
	\end{split}
\end{equation}

Let $\mathcal{E}$ be an equivariant nef vector bundle of rank $r$ on $X_3$ and
\begin{equation*}
	\pi:\mathbb{P}(\mathcal{E}) \rightarrow X_3
\end{equation*}
denote the projectivization map. Let
\begin{equation*}
		\mu_j:=\mu_{\text{min}}(\mathcal{E}|_{l_j}) \text{ and } m_j=\text{deg}(\mathcal{E}|_{l_j})-\mu_j \text{ for }j=1, \, \ldots, \, 12.
\end{equation*}
\noindent
Then from \eqref{mor}, we have the curves \(C_0, \, C_1, \ldots, C_{12}\). Recall from \eqref{pi*} that 
\begin{equation}\label{3-pushforward}
	\pi_*C_0=0 \text{ and } \pi_*C_j=l_j \text { for } j=1, \, \ldots, \, 12.
\end{equation}
By Proposition \ref{mc_prop}, we have that 
\begin{equation*}
	\overline{\rm{NE}}(\mathbb{P} (\mathcal{E}))= \R_{\geq 0} \, C_0 + \R_{\geq 0}C_1 + \cdots + \R_{\geq 0} \, C_{12}.
\end{equation*}
We have the following intersection products (see \eqref{intprod})
\begin{equation}\label{3-intersection}
	\xi \cdot C_0=1, \, \xi \cdot C_j= \mu_{\text{min}}(\mathcal{E}|_{l_j}) \text{ for } j=1, \,\ldots, \, 12.
\end{equation}

\begin{thm}\label{SC_3}
	Let $\mathcal{E}$ be an equivariant nef vector bundle of rank \(r\) on \(X_3\) satisfying the following conditions:
	\begin{equation}\label{key condn}
		\begin{split}
		&	\mu_1=\mu_3=\mu_6=\mu_{10}; \, \mu_2= \mu_8,\\
		& \mu_7, \, \mu_9, \, \mu_{12} \geq \mu_5 ,
		\end{split}
	\end{equation}
	%\vspace{-.5cm}
	\begin{equation}\label{key condn1}
		\begin{split}
		&	\mu_4, \,  \mu_{11} \geq c_{2,3} \, \mu_8, \\
		&  \mu_9, \,  \mu_{12} \geq c_{1,2} \, \mu_8,
		\end{split}
	\end{equation}
	%\vspace{-.25cm}
	\begin{equation}\label{key condn2}
	\begin{split}
	&	\mu_7 \geq c_{1,3} \, \mu_{10}, \\
	& \mu_{12} \geq (c_{1,3}+ c_{1,2} \, c_{2,3}) \, \mu_{10}.
	\end{split}
\end{equation}
	Then the Seshadri constants of $\mathcal{E}$ at any \(x \in X_3\) are given by the following. 
	\begin{equation*}
		\varepsilon(X_3,\mathcal{E},x) =
		\begin{cases}
			{\rm min}\{\mu_{\text{min}}(\mathcal{E}|_{\Gamma_3}),\varepsilon(X_3^{(2)},\mathcal{E}|_{X_3^{(2)}},x)\}, &
			{\rm if}\ x\in \Gamma_3, \\
			\varepsilon(X_3^{(2)},\mathcal{E}|_{X_3^{(2)}},x),  & {\rm if}\ x\notin \Gamma_3.
		\end{cases}
	\end{equation*} 
\end{thm}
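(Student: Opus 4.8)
The plan is to mirror the proof of Theorem \ref{SC_H}, exploiting the fibration $\pi_3\colon X_3\to X_1=\mathbb{P}^1$ whose fibre through $x$ is the Hirzebruch surface $X_3^{(2)}$ and whose canonical section is $\Gamma_3=D_2'\cap D_3'=l_5$ (so $\mu_{\min}(\mathcal{E}|_{\Gamma_3})=\mu_5$). Throughout I use the Fulger--Murayama formula $\varepsilon(\mathcal{E},x)=\inf_C\frac{\xi\cdot C}{{\rm mult}_x\pi_*C}$ together with the spanning set $C_0,\dots,C_{12}$ of $\overline{\rm{NE}}(\mathbb{P}(\mathcal{E}))$ from Proposition \ref{mc_prop} and the numbers \eqref{3-intersection}. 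Writing $C\equiv\sum_{j=0}^{12}a_jC_j$ with $a_j\in\mathbb{Z}_{\ge 0}$, \eqref{3-intersection} gives $\xi\cdot C=a_0+\sum_{j\ge 1}a_j\mu_j$, and \eqref{X lin_equiv} gives $\pi_*C\equiv q_1\Gamma_3+q_2\Gamma_3^{(2)}+q_3\Gamma_3^{(3)}$ with nonnegative integer combinations $q_1,q_2,q_3$ of the $a_j$; in particular $q_1=a_5+a_7+a_9+a_{12}$, and by Proposition \ref{dual-basis} one has $\pi_*C\cdot D_i=q_i$.

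The easy direction (upper bounds) comes from restriction. Since $x\in X_3^{(2)}$, every competitor over $x$ in $\mathbb{P}(\mathcal{E}|_{X_3^{(2)}})$ is also one in $\mathbb{P}(\mathcal{E})$, so $\varepsilon(\mathcal{E},x)\le\varepsilon(X_3^{(2)},\mathcal{E}|_{X_3^{(2)}},x)$; and when $x\in\Gamma_3$, restricting to $\Gamma_3\cong\mathbb{P}^1$ with Proposition \ref{FMcor3.21} gives $\varepsilon(\mathcal{E},x)\le\mu_{\min}(\mathcal{E}|_{\Gamma_3})=\mu_5$. I also need two restriction bounds for the fibre constant, obtained exactly as in Lemma \ref{H5}: imposing $\pi_*C\equiv\Gamma_3^{(3)}$ (resp. $\Gamma_3^{(2)}$) forces the shape of $C$, and the hypotheses \eqref{key condn}--\eqref{key condn1} yield $\mu_{\min}(\mathcal{E}|_{\Gamma_3^{(3)}})=\mu_{10}$ and $\mu_{\min}(\mathcal{E}|_{\Gamma_3^{(2)}})=\mu_8$; since $\Gamma_3^{(3)},\Gamma_3^{(2)}\subset X_3^{(2)}$ pass through $x$ (the former always, the latter when $x\in\Gamma_3^{(2)}$), \eqref{FMcorp3.21} on $X_3^{(2)}$ gives $\varepsilon(X_3^{(2)},\mathcal{E}|_{X_3^{(2)}},x)\le\mu_{10}$ always and $\le\mu_8$ when $x\in\Gamma_3^{(2)}$. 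Crucially I do \emph{not} need the exact value of the fibre constant, only these bounds.

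For the lower bound I treat an arbitrary $C\in\mathcal{C}_{\mathcal{E},x}$ and split on whether $\pi_*C$ lies in the fibre. If $q_1=\pi_*C\cdot D_1=0$, then $\pi_*C\subset X_3^{(2)}$, so $C\subset\mathbb{P}(\mathcal{E}|_{X_3^{(2)}})$ and $\frac{\xi\cdot C}{m}\ge\varepsilon(X_3^{(2)},\mathcal{E}|_{X_3^{(2)}},x)$ by definition. If $q_1\ge 1$, Bézout against the fibre $X_3^{(2)}\equiv D_1$ gives $q_1\ge m$, and I use \eqref{curveGam}: when $x\in\Gamma_3^{(2)}\setminus\Gamma_3$, $\pi_*C\not\subset D_2'$ gives $q_2\ge c_{1,2}q_1$, while when $x\in\Gamma_3^{(3)}\setminus\Gamma_3^{(2)}$, $\pi_*C\not\subset D_3'$ gives $q_3\ge c_{1,3}q_1+c_{2,3}q_2$. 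Feeding the slope hypotheses into $\xi\cdot C$ then yields: in the $\Gamma_3^{(3)}\setminus\Gamma_3^{(2)}$ case, $\mu_7\ge c_{1,3}\mu_{10}$ and $\mu_{12}\ge(c_{1,3}+c_{1,2}c_{2,3})\mu_{10}$ from \eqref{key condn2} give $\xi\cdot C\ge\mu_{10}\,q_3\ge\mu_{10}\,m$; in the $\Gamma_3^{(2)}\setminus\Gamma_3$ case, $\mu_4,\mu_{11}\ge c_{2,3}\mu_8$ and $\mu_9,\mu_{12}\ge c_{1,2}\mu_8$ from \eqref{key condn1} (with $\mu_7\ge c_{1,3}\mu_{10}$) combine with $q_2\ge c_{1,2}q_1$ to give $\xi\cdot C\ge\min\{\mu_8,\mu_{10}\}\,q_1\ge\min\{\mu_8,\mu_{10}\}\,m$; and when $x\in\Gamma_3$ the inequalities $\mu_7,\mu_9,\mu_{12}\ge\mu_5$ from \eqref{key condn} alone give $\xi\cdot C\ge\mu_5\,q_1\ge\mu_5\,m$. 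Comparing each bound with the restriction upper bounds of the previous paragraph yields $\frac{\xi\cdot C}{m}\ge\varepsilon(X_3^{(2)},\mathcal{E}|_{X_3^{(2)}},x)$ when $x\notin\Gamma_3$, and $\ge\min\{\mu_5,\varepsilon(X_3^{(2)},\mathcal{E}|_{X_3^{(2)}},x)\}$ when $x\in\Gamma_3$; taking $\inf_C$ and combining with the upper bounds closes the proof.

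I expect the main obstacle to be the transverse case $q_1\ge 1$ with $x\notin\Gamma_3$: here the competitor does not lie in the fibre, so the fibre Seshadri constant cannot be invoked directly, and one must convert a single nef inequality from \eqref{curveGam} into the correct lower bound. The delicate point is to verify that the hypotheses \eqref{key condn1}--\eqref{key condn2} are exactly what is needed to absorb the ``off-section'' coefficients $a_4,a_7,a_9,a_{11},a_{12}$ (whose curves $l_4,l_7,l_9,l_{11},l_{12}$ carry the extra Bott-number contributions in \eqref{X lin_equiv}) into the dominant slopes $\mu_8$ and $\mu_{10}$; handling the three positions of $x$ in the flag $\Gamma_3\subseteq\Gamma_3^{(2)}\subseteq\Gamma_3^{(3)}$ separately, and confirming that no stronger relation between $\mu_5$ and the fibre constant is required, is where the care lies.
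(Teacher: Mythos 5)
Your proposal is correct, and its skeleton is the paper's: the same decomposition \(C\equiv\sum_j a_jC_j\) with the intersection numbers \eqref{3-intersection}, the same two lemmas computing \(\mu_{\text{min}}(\mathcal{E}|_{\Gamma_3^{(2)}})=\mu_8\) and \(\mu_{\text{min}}(\mathcal{E}|_{\Gamma_3^{(3)}})=\mu_{10}\) (Lemmas \ref{SC3lem1} and \ref{SC3lem2}), the same fibre/transverse dichotomy with B\'ezout giving \(q_1\ge m\), the same use of \eqref{curveGam} to extract \(q_2\ge c_{1,2}q_1\) (resp.\ \(q_3\ge c_{1,3}q_1+c_{2,3}q_2\)), and the same closing principle --- formalized in the paper as Lemma \ref{SC X3 lem} --- that one only needs every transverse competitor to dominate the fibre constant, never its exact value. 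The one genuine divergence is in the case \(x\notin\Gamma_3^{(2)}\): to obtain the upper bound \(\varepsilon(X_3^{(2)},\mathcal{E}|_{X_3^{(2)}},x)\le\mu_{10}\), the paper identifies \(X_3^{(2)}\) with a Hirzebruch surface via explicit coordinates and torus action, introduces the curves \(D_1^{(2)}=D_2\cap X_3^{(2)}\) and \(D_1'^{(2)}=D_2'\cap X_3^{(2)}\), computes \(\mu_{\text{min}}\) of \(\mathcal{E}\) on both so as to verify hypothesis \eqref{key_H}, and then invokes Theorem \ref{SC_H}; you get the same bound in one line by restricting to the curve \(\Gamma_3^{(3)}\subset X_3^{(2)}\) through \(x\), namely \(\varepsilon(X_3^{(2)},\mathcal{E}|_{X_3^{(2)}},x)\le\varepsilon(\mathcal{E}|_{\Gamma_3^{(3)}},x)=\mu_{\text{min}}(\mathcal{E}|_{\Gamma_3^{(3)}})=\mu_{10}\) via \eqref{FMcorp3.21}, Proposition \ref{FMcor3.21} and Lemma \ref{SC3lem2}. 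This is a real streamlining: it lets Lemma \ref{SC3lem2} (which the paper proves but never explicitly cites in its final argument, redoing the analogous computation for \(D_1^{(2)},D_1'^{(2)}\) instead) carry the load, and it removes the logical dependence of Theorem \ref{SC_3} on Theorem \ref{SC_H} entirely; what the paper's longer route buys is the extra information that \(\mathcal{E}|_{X_3^{(2)}}\) itself satisfies the Hirzebruch hypothesis, which is of independent interest but not needed for the theorem. One cosmetic point: in the case \(x\in\Gamma_3^{(2)}\setminus\Gamma_3\) your chain actually yields the cleaner bound \(\xi\cdot C\ge\mu_8\,q_2\ge\mu_8\,m\) (the minimum with \(\mu_{10}\), and the inequality \(\mu_7\ge c_{1,3}\mu_{10}\), are superfluous there), which then dominates \(\varepsilon(X_3^{(2)},\mathcal{E}|_{X_3^{(2)}},x)\le\mu_8\) exactly as in the paper's Case 1 of Proposition \ref{SC X3 p3}.
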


In order to prove Theorem \ref{SC_3}, we will prove a series of results.

Let $\mathcal{E}$ be an equivariant nef vector bundle of rank \(r\) on \(X_3\) satisfying the conditions of Theorem \ref{SC_3}. Let \(x \in X_3\)	and \(C \in \mathcal{C}_{\mathcal{E}, x}\) and denote \(m={\rm mult}_{x} \pi_*C\). We can write \(C \equiv a_0 C_0+ \ldots +a_{12} C_{12}\) with \(a_0, \ldots, a_{12} \in \Z_{\geq 0}\). Then using \eqref{3-intersection} we have 
\begin{equation}\label{X1}
	\frac{\xi \cdot C}{m}= \frac{a_0 + \sum_{j=1}^{12} a_j \, \mu_j}{m}.
\end{equation}
By \eqref{X lin_equiv} and \eqref{3-pushforward}, we have
\begin{equation}\label{X-pushforward2}
	\pi_*C \equiv p_1 \, l_5 + p_2 \, l_8 + p_3 \, l_{10}, 
\end{equation}
where
\begin{equation}\label{X-coefficients}
	\begin{split}
		&p_1=a_5+a_7+ a_9 +a_{12}, \\
		&p_2=a_2+ a_8+ (a_9+a_{12}) c_{1,2}+(a_4+a_{11}) c_{2,3} \text{ and}\\
		& p_3=a_1+a_3+a_6+a_{10}+ (a_7+a_{12}) c_{1,3} + a_{12} c_{1,2} c_{2,3}.
	\end{split}
\end{equation}
From the quotient construction (see Subsection \ref{qc}), we have $\Gamma_3 \, =l_5$ and hence \(\mu_{\text{min}}(\mathcal{E}|_{\Gamma_3})=\mu_5\). We then prove the following. 

\begin{lemma}\label{SC3lem1}
	With the notations as above, we have \[\mu_{\text{min}}(\mathcal{E}|_{\Gamma_3^{(2)}})=\mu_8.\]
\end{lemma}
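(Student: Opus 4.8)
The plan is to follow the template of Lemma \ref{H5}, adapting it to the height-three setting. First I would invoke Proposition \ref{FMcor3.21} to rewrite $\mu_{\text{min}}(\mathcal{E}|_{\Gamma_3^{(2)}}) = \varepsilon(\mathcal{E}|_{\Gamma_3^{(2)}},x)$, and then use Definition \ref{fulm} to express this Seshadri constant as the infimum of $\frac{\xi \cdot C}{{\rm mult}_x \pi_*C}$ taken over curves $C \in \mathcal{C}_{\mathcal{E},x}$ contained in $\mathbb{P}(\mathcal{E}|_{\Gamma_3^{(2)}})$. Since $\pi$ restricted to this projective subbundle maps onto $\Gamma_3^{(2)}$, any such curve $C$ that is not a fibre has $\pi_*C$ supported on $\Gamma_3^{(2)}$; taking the section curves with $\pi_*C = \Gamma_3^{(2)}$ (so that ${\rm mult}_x \pi_*C = 1$, as $\Gamma_3^{(2)}\cong\mathbb{P}^1$ is smooth through $x$) reduces the computation to $\inf \xi \cdot C$.

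The core of the argument is the combinatorial analysis of the constraint $\pi_*C = \Gamma_3^{(2)}$. Writing $C \equiv a_0 C_0 + \ldots + a_{12} C_{12}$ and using \eqref{X-pushforward2} together with $\Gamma_3^{(2)} \equiv l_8$, the condition $\pi_*C = \Gamma_3^{(2)}$ forces $p_1 = 0$, $p_2 = 1$, $p_3 = 0$ in the notation of \eqref{X-coefficients}. I would solve these in order: $p_1 = 0$ gives $a_5 = a_7 = a_9 = a_{12} = 0$; feeding this into $p_3 = 0$ gives $a_1 = a_3 = a_6 = a_{10} = 0$; and $p_2 = 1$ then reduces to $a_2 + a_8 + c_{2,3}(a_4 + a_{11}) = 1$.

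Next I would enumerate the nonnegative integer solutions of this last equation. Because the Bott numbers are positive, $c_{2,3} \geq 1$: if $c_{2,3} \geq 2$ the only solutions have $a_4 = a_{11} = 0$ and $a_2 + a_8 = 1$, while if $c_{2,3} = 1$ we additionally allow $a_4 = 1$ or $a_{11} = 1$ (all remaining coefficients zero). Correspondingly $\xi \cdot C = a_0 + \mu_2$, $\ a_0 + \mu_8$, $\ a_0 + \mu_4$, or $a_0 + \mu_{11}$ by \eqref{3-intersection}. The hypotheses then close the argument: $\mu_2 = \mu_8$ by \eqref{key condn}, and $\mu_4, \mu_{11} \geq c_{2,3}\,\mu_8 \geq \mu_8$ by \eqref{key condn1} together with $c_{2,3} \geq 1$. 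Hence the minimum of $\xi \cdot C$ over all admissible $C$, attained at $a_0 = 0$ on the $C_2$ or $C_8$ summand, equals $\mu_8$, giving $\mu_{\text{min}}(\mathcal{E}|_{\Gamma_3^{(2)}}) = \mu_8$.

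The main obstacle I anticipate is justifying the reduction to section curves with $\pi_*C = \Gamma_3^{(2)}$ of multiplicity one, i.e. confirming that multiple covers or curves pushing forward to higher multiples of $\Gamma_3^{(2)}$ cannot lower the infimum below $\mu_8$; this is precisely where the nefness of $\mathcal{E}$ and the smoothness of $\Gamma_3^{(2)} \cong \mathbb{P}^1$ through $x$ are essential, exactly as in Lemma \ref{H5}. The remaining work is the bookkeeping of the linear system \eqref{X-coefficients}, which is routine but must be carried out carefully so that each hypothesis in \eqref{key condn}--\eqref{key condn1} invoked above is genuinely needed.
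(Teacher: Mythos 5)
Your proposal is correct and follows essentially the same route as the paper's proof: Proposition \ref{FMcor3.21} together with Definition \ref{fulm}, the constraints $p_1=p_3=0$, $p_2=1$ extracted from \eqref{X-pushforward2}, the case split on $c_{2,3}$, and the hypotheses \eqref{key condn}--\eqref{key condn1} to conclude $\mu_{\text{min}}(\mathcal{E}|_{\Gamma_3^{(2)}})=\mu_8$. The multiple-cover issue you flag is not treated any more carefully in the paper (it likewise writes $\pi_*C=\Gamma_3^{(2)}$ outright), and in any case the same estimate survives when $\pi_*C = d\,\Gamma_3^{(2)}$ with $d>1$, since then ${\rm mult}_x\pi_*C=d$ while the constraint becomes $a_2+a_8+c_{2,3}(a_4+a_{11})=d$, so $\xi\cdot C \geq d\,\mu_8$ and the quotient is still at least $\mu_8$.
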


\begin{proof}
Using Proposition \ref{FMcor3.21}, we have 
	\begin{equation}\label{SC3_1}
		\begin{split}
			\mu_{\text{min}}(\mathcal{E}|_{\Gamma_3^{(2)}})=& \, \varepsilon(\mathcal{E}|_{\Gamma_3^{(2)}},x) \\
			=& \inf\limits_{\substack{C \in \mathcal{C}_{\mathcal{E}, x}\\ C \subseteq \mathbb{P}(\mathcal{E}|_{\Gamma_3^{(2)}})}} \frac{\xi|_{\mathbb{P}(\mathcal{E}|_{\Gamma_3^{(2)}})} \cdot C}{{\rm mult}_{x} \pi_*C}  \\
			=&  \inf\limits_{\substack{C \in \mathcal{C}_{\mathcal{E}, x}\\ C \subseteq \mathbb{P}(\mathcal{E}|_{\Gamma_3^{(2)}})}} \xi \cdot C \ (\text{as } \pi_*C=\Gamma_3^{(2)}).
		\end{split}
	\end{equation}
	
	Let \(C \in  \mathcal{C}_{\mathcal{E}, x}\) and write \(C \, \equiv \, a_0 C_0+ \, \ldots \, + a_{12} C_{12}\) for some nonnegative integers \(a_0, \, \ldots, \, a_{12} \). Moreover, if \( C \subseteq \mathbb{P}(\mathcal{E}|_{\Gamma_3^{(2)}})\) then we have 
	\[\pi_*C=\Gamma_3^{(2)} \equiv l_8.\] Thus from \eqref{X-pushforward2}, we get 
	\[p_1=p_3=0 \text{ and } p_2=1.\] 
	This implies,
	\[a_5=a_7=a_9=a_{12}=0=a_1=a_3=a_6=a_7=a_{10}, \, a_2+a_8+(a_4+a_{11}) \, c_{2,3}=1.\]
	If \(c_{2,3} > 1\), we have two possibilities, namely
	\begin{equation*}\label{SC3_7}
		C \equiv a_0 C_0 + C_2, \text{ or }C \equiv a_0 C_0 + C_8.
	\end{equation*}
If \(c_{2,3} = 1\), we have 
\begin{equation*}
	C \equiv a_0 C_0 + C_2, \text{ or }C \equiv a_0 C_0 + C_8 \text{ or }C \equiv a_0 C_0 + C_4 \text{ or }C \equiv a_0 C_0 + C_{11}.
\end{equation*}
Hence from \eqref{SC3_1}, we get \(\mu_{\text{min}}(\mathcal{E}|_{\Gamma_3^{(2)}})=\mu_8\) as \(\mu_2=\mu_8\) by \eqref{key condn} and $\mu_4, \,  \mu_{11} \geq c_{2,3} \, \mu_8$ by \eqref{key condn1}. 
\end{proof}

\begin{lemma}\label{SC3lem2}
	With the notations as above, we have \[\mu_{\text{min}}(\mathcal{E}|_{\Gamma_3^{(3)}})=\mu_{10}.\]
\end{lemma}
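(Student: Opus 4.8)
The plan is to follow the same strategy as in the proof of Lemma \ref{SC3lem1}, with $\Gamma_3^{(3)}$ playing the role of $\Gamma_3^{(2)}$. First I would apply Proposition \ref{FMcor3.21} to identify $\mu_{\text{min}}(\mathcal{E}|_{\Gamma_3^{(3)}})$ with $\varepsilon(\mathcal{E}|_{\Gamma_3^{(3)}},x)$, and then rewrite the latter, exactly as in \eqref{SC3_1}, as the infimum of $\xi \cdot C$ taken over curves $C \in \mathcal{C}_{\mathcal{E},x}$ with $C \subseteq \mathbb{P}(\mathcal{E}|_{\Gamma_3^{(3)}})$. Here one uses that $\Gamma_3^{(3)} = X_3^{(3)} \cong \mathbb{P}^1$ and that $x$ is a smooth point of it, so that $\pi_*C = \Gamma_3^{(3)}$ with ${\rm mult}_x\pi_*C = 1$ for every such $C$; this is precisely what collapses the quotient $\xi \cdot C / m$ to $\xi \cdot C$.

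The key computation is to determine which of the generators $C_0, \ldots, C_{12}$ can appear. Writing $C \equiv a_0 C_0 + \cdots + a_{12}C_{12}$ with $a_i \in \Z_{\geq 0}$, the condition $C \subseteq \mathbb{P}(\mathcal{E}|_{\Gamma_3^{(3)}})$ forces $\pi_*C = \Gamma_3^{(3)} \equiv l_{10}$, so by \eqref{X-pushforward2} I must have $p_1 = p_2 = 0$ and $p_3 = 1$. I would then solve this system using the explicit expressions \eqref{X-coefficients} together with the nonnegativity of the $a_i$: the relation $p_1 = a_5+a_7+a_9+a_{12} = 0$ forces $a_5 = a_7 = a_9 = a_{12} = 0$; substituting into $p_2 = 0$ leaves $a_2 + a_8 + (a_4+a_{11})c_{2,3} = 0$, which (as $c_{2,3} \geq 1$) forces $a_2 = a_8 = a_4 = a_{11} = 0$; and since $a_7 = a_{12} = 0$ already, the equation $p_3 = 1$ collapses to $a_1 + a_3 + a_6 + a_{10} = 1$.

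Thus exactly one of $a_1, a_3, a_6, a_{10}$ equals $1$ and the remaining $a_i$ (apart from $a_0$) vanish, so $C$ is of one of the four forms $a_0 C_0 + C_j$ with $j \in \{1,3,6,10\}$. By \eqref{3-intersection} each gives $\xi \cdot C = a_0 + \mu_j$, and since $\mu_1 = \mu_3 = \mu_6 = \mu_{10}$ by \eqref{key condn}, in every case $\xi \cdot C = a_0 + \mu_{10} \geq \mu_{10}$, with equality when $a_0 = 0$. Taking the infimum then yields $\mu_{\text{min}}(\mathcal{E}|_{\Gamma_3^{(3)}}) = \mu_{10}$. I do not expect a genuine obstacle here: the argument is a direct analogue of Lemma \ref{SC3lem1}, and the one point worth flagging is that it is in fact \emph{cleaner} than the $\Gamma_3^{(2)}$ case, because killing $a_7$ and $a_{12}$ via $p_1 = 0$ removes all the Bott-number terms $(a_7 + a_{12})c_{1,3}$ and $a_{12}c_{1,2}c_{2,3}$ from $p_3$ at the outset. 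Consequently there is no case split on the Bott numbers, and only the single block \eqref{key condn} of hypotheses is needed; conditions \eqref{key condn1} and \eqref{key condn2} play no role in this lemma.
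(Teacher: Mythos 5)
Your proposal is correct and follows the paper's proof essentially verbatim: the same chain of equalities via Proposition \ref{FMcor3.21} as in \eqref{SC3_2}, the same reduction of $\pi_*C \equiv l_{10}$ to $p_1=p_2=0$, $p_3=1$, forcing $C \equiv a_0 C_0 + C_j$ with $j \in \{1,3,6,10\}$, and the same final use of $\mu_1=\mu_3=\mu_6=\mu_{10}$ from \eqref{key condn}. Your closing observations also match the paper: its proof of this lemma involves no case split on the Bott numbers (in contrast with Lemma \ref{SC3lem1}, where $c_{2,3}=1$ admits the extra classes $C_4$ and $C_{11}$ and hypothesis \eqref{key condn1} is invoked), and it uses only \eqref{key condn}.
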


\begin{proof}
	First note that 
	\begin{equation}\label{SC3_2}
		\begin{split}
			\mu_{\text{min}}(\mathcal{E}|_{\Gamma_3^{(3)}})=& \, \varepsilon(\mathcal{E}|_{\Gamma_3^{(3)}},x) \ (\text{by Proposition \ref{FMcor3.21}})\\
			=& \inf\limits_{\substack{C \in \mathcal{C}_{\mathcal{E}, x}\\ C \subseteq \mathbb{P}(\mathcal{E}|_{\Gamma_3^{(3)}})}} \frac{\xi|_{\mathbb{P}(\mathcal{E}|_{\Gamma_3^{(3)}})} \cdot C}{{\rm mult}_{x} \pi_*C}  \\
			=&  \inf\limits_{\substack{C \in \mathcal{C}_{\mathcal{E}, x}\\ C \subseteq \mathbb{P}(\mathcal{E}|_{\Gamma_3^{(3)}})}} \xi \cdot C \ (\text{as } \pi_*C=\Gamma_3^{(3)}).
		\end{split}
	\end{equation}
	
	Let \(C \in  \mathcal{C}_{\mathcal{E}, x}\) and write \(C \, \equiv \, a_0 C_0+ \, \ldots \, + a_{12} C_{12}\) for some nonnegative integers \(a_0, \, \ldots, \, a_{12} \). Moreover, if \( C \subseteq \mathbb{P}(\mathcal{E}|_{\Gamma_3^{(3)}})\) then we have 
	\[\pi_*C=\Gamma_3^{(3)} \equiv l_{10}.\] Now from \eqref{X-pushforward2}, we get 
	\[p_1=p_2=0 \text{ and } p_3=1.\] 
	This implies,
	\[a_5=a_7=a_9=a_{12}=0=a_2=a_4=a_8=a_{11}, \, a_1+a_3+a_6+a_{10}=1.\]
	Thus \(C\) will be of the following forms
	\begin{equation*}\label{SC3_8}
		C \equiv a_0 C_0 + C_1, \text{ or } C \equiv a_0 C_0 + C_3 \text{ or } C \equiv a_0 C_0 + C_6 \text{ or } C \equiv a_0 C_0 + C_{10} .
	\end{equation*}
	However, in all the cases, we have \[\xi \cdot C=a_0+\mu_{10}\] as \(\mu_1=\mu_3=\mu_6=\mu_{10}\) by \eqref{key condn}. Hence from \eqref{SC3_2}, we get \(\mu_{\text{min}}(\mathcal{E}|_{\Gamma_3^{(3)}})=\mu_{10}\). 
\end{proof}

\begin{prop}\label{SC X_3 p1}
	With the notations as in Theorem \ref{SC_3}, we have 
	\begin{equation*}
		\varepsilon(X_3,\mathcal{E},x) \geq {\rm min}\{\mu_{\text{min}}(\mathcal{E}|_{\Gamma_3}),\varepsilon(X_3^{(2)},\mathcal{E}|_{X_3^{(2)}},x)\}
	\end{equation*}
for all \(x \in X_3 \).
\end{prop}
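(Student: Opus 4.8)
The plan is to bound $\frac{\xi\cdot C}{m}$ below by $\min\{\mu_5,\,\varepsilon(X_3^{(2)},\mathcal{E}|_{X_3^{(2)}},x)\}$ uniformly over all $C\in\mathcal{C}_{\mathcal{E},x}$, where $m={\rm mult}_x\pi_*C$, and then pass to the infimum via Definition \ref{fulm}; since $\mu_{\text{min}}(\mathcal{E}|_{\Gamma_3})=\mu_5$ this gives the proposition. So I would fix $C$, write $C\equiv a_0C_0+\ldots+a_{12}C_{12}$ and use \eqref{X1} together with $\pi_*C\equiv p_1\Gamma_3+p_2\Gamma_3^{(2)}+p_3\Gamma_3^{(3)}$ from \eqref{X-coefficients}. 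The argument then splits, exactly as in the proof of Proposition \ref{H2}, according to whether or not $\pi_*C$ is contained in the Hirzebruch surface $X_3^{(2)}$.

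First I would record the numerical fact underlying both cases. Since $X_3^{(2)}=\pi_3^{-1}(\pi_3(x))$ is a fibre of the iterated projection $\pi_3\colon X_3\to X_1\cong\mathbb{P}^1$ and $D_1=\mathbb{V}(w_1)=\pi_3^{-1}([1:0])$ is another such fibre (Subsection \ref{qc}), the two are numerically equivalent, $X_3^{(2)}\equiv D_1$. Combined with the duality between $\{\Gamma_3,\Gamma_3^{(2)},\Gamma_3^{(3)}\}$ and $\{D_1,D_2,D_3\}$ of Proposition \ref{dual-basis}, this yields $\pi_*C\cdot X_3^{(2)}=\pi_*C\cdot D_1=p_1$.

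If $\pi_*C\not\subset X_3^{(2)}$, then $X_3^{(2)}$ is an effective divisor through $x$, smooth there, meeting the curve $\pi_*C$ properly, so B\'ezout's theorem gives $p_1\geq m$. As $\mathcal{E}$ is nef every $\mu_j\geq0$, and the hypotheses $\mu_7,\mu_9,\mu_{12}\geq\mu_5$ from \eqref{key condn} yield $\sum_{j=1}^{12}a_j\mu_j\geq(a_5+a_7+a_9+a_{12})\mu_5=p_1\mu_5\geq m\mu_5$, hence $\frac{\xi\cdot C}{m}\geq\mu_5$. If instead $\pi_*C\subset X_3^{(2)}$, then $C\subset\mathbb{P}(\mathcal{E}|_{X_3^{(2)}})$; since $\xi$ restricts to the tautological class there and $x\in X_3^{(2)}$, the curve $C$ lies in $\mathcal{C}_{\mathcal{E}|_{X_3^{(2)}},x}$, so Definition \ref{fulm} applied to $\mathcal{E}|_{X_3^{(2)}}$ gives $\frac{\xi\cdot C}{m}\geq\varepsilon(X_3^{(2)},\mathcal{E}|_{X_3^{(2)}},x)$. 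In both cases $\frac{\xi\cdot C}{m}\geq\min\{\mu_5,\,\varepsilon(X_3^{(2)},\mathcal{E}|_{X_3^{(2)}},x)\}$, and taking the infimum finishes the proof.

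The hard part will be the numerical identification $X_3^{(2)}\equiv D_1$ and the resulting computation $\pi_*C\cdot X_3^{(2)}=p_1$, since this is precisely what turns the geometric dichotomy into the inequality $p_1\geq m$ feeding the slope estimate; the rest is routine given nefness and the single line $\mu_7,\mu_9,\mu_{12}\geq\mu_5$ of \eqref{key condn}. I note that the finer hypotheses \eqref{key condn1}, \eqref{key condn2} and Lemmas \ref{SC3lem1}, \ref{SC3lem2} are not needed for this lower bound; they enter only when $\varepsilon(X_3^{(2)},\mathcal{E}|_{X_3^{(2)}},x)$ is subsequently evaluated through the Hirzebruch results of Section \ref{hirz}.
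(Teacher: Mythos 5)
Your proposal is correct and follows essentially the same route as the paper: the same dichotomy on whether $\pi_*C\subset X_3^{(2)}$, the same B\'ezout argument via $X_3^{(2)}\equiv D_1$ and $\pi_*C\cdot D_1=p_1$ to get $p_1\geq m$, the same use of nefness plus $\mu_7,\mu_9,\mu_{12}\geq\mu_5$ to conclude $\frac{\xi\cdot C}{m}\geq\mu_5$, and the same appeal to the definition of $\varepsilon(X_3^{(2)},\mathcal{E}|_{X_3^{(2)}},x)$ in the contained case. Your closing observation that \eqref{key condn1}, \eqref{key condn2} and Lemmas \ref{SC3lem1}, \ref{SC3lem2} are not needed here is also consistent with the paper's proof.
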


\begin{proof}
	First suppose that $\pi_* C \,\not\subset \, X_3^{(2)}$. Then, using \eqref{X lin_equiv}, \eqref{3-pushforward} and \eqref{X-pushforward2},  we have 
	$$\pi_* C \cdot D_1 \,=\, \pi_*C \cdot X_3^{(2)} \,\ge\, m (\text{mult}_xX_3^{(2)}) \,\ge\, m,$$ by B\'ezout's
	theorem. Again, by \eqref{X lin_equiv}, \eqref{3-pushforward} and \eqref{X-pushforward2} we have  
	$$\pi_* C \cdot D_1 \,=\, p_1.$$
	So $p_1 \,\ge\, m$. Thus from \eqref{X1}, we get 
	\begin{equation*}
		\begin{split}
			\frac{\xi \cdot C}{m} & = \frac{a_0 + \sum_{j=1}^{12} a_j \, \mu_j}{m}\\
			& \geq \mu_5=\mu_{\text{min}}(\mathcal{E}|_{\Gamma_3}) \ (\text{using } p_1 \geq m \text{ and } \eqref{key condn}).
		\end{split}
	\end{equation*}
	Next suppose that $\pi_* C\,\subset\, X_3^{(2)}$. Then from the definition of 
	$\varepsilon(X_3^{(2)}, \mathcal{E}|_{X_3^{(2)}},x)$ it follows that 
	
	\begin{equation}\label{X-min}
		\frac{\xi\cdot C}{m} \,=\, \frac{\xi|_{\mathbb{P}(\mathcal{E}|_{X_3^{(2)}})} \cdot C}{m} \,\ge\, \varepsilon(X_3^{(2)}, \mathcal{E}|_{X_3^{(2)}},x).
	\end{equation}
	
	Consequently, $\frac{\xi \cdot C}{m} \,\ge\, \text{min}\{\mu_{\text{min}}(\mathcal{E}|_{\Gamma_3}),\varepsilon(X_3^{(2)},\mathcal{E}|_{X_3^{(2)}},x)\}$ for all irreducible
	and reduced curves $C \,\in\, \mathcal{C}_{\mathcal{E}, x}$. Hence the proposition follows. 
\end{proof}

\begin{prop} \label{SC X3 p2}
	If \(x \in \Gamma_3\), then the Seshadri constant is given by 
	\begin{equation*}
	\varepsilon(X_3,\mathcal{E},x)=\text{min}	\{\mu_{\text{min}}(\mathcal{E}|_{\Gamma_3}),\varepsilon(X_3^{(2)},\mathcal{E}|_{X_3^{(2)}},x)\}.
	\end{equation*}
\end{prop}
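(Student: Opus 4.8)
The plan is to upgrade the one-sided bound of Proposition~\ref{SC X_3 p1} to an equality by producing a matching upper bound; this is exactly where the hypothesis $x\in\Gamma_3$ enters. Since Proposition~\ref{SC X_3 p1} already gives
\[
\varepsilon(X_3,\mathcal{E},x)\ \ge\ \text{min}\{\mu_{\text{min}}(\mathcal{E}|_{\Gamma_3}),\,\varepsilon(X_3^{(2)},\mathcal{E}|_{X_3^{(2)}},x)\},
\]
it suffices to establish the reverse inequality, which I would obtain by bounding $\varepsilon(X_3,\mathcal{E},x)$ separately by each of the two quantities appearing in the minimum.

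First I would bound $\varepsilon(X_3,\mathcal{E},x)$ by $\varepsilon(X_3^{(2)},\mathcal{E}|_{X_3^{(2)}},x)$. Recall that $x\in X_3^{(2)}$ holds for every $x$ by the construction of the subvarieties $X_i^{(j)}$ (Subsection~\ref{bott}), so this particular bound does not require $x\in\Gamma_3$. The key observation is that any curve through $x$ contained in $X_3^{(2)}$ is in particular a curve through $x$ in $X_3$, and that restriction is compatible, $(\mathcal{E}|_{X_3^{(2)}})|_C=\mathcal{E}|_C$. Applying \eqref{FMcorp3.21} first on $X_3$ and then on $X_3^{(2)}$ yields
\[
\varepsilon(X_3,\mathcal{E},x)=\inf_{x\in C\subset X_3}\varepsilon(\mathcal{E}|_C,x)\ \le\ \inf_{x\in C\subset X_3^{(2)}}\varepsilon(\mathcal{E}|_C,x)=\varepsilon(X_3^{(2)},\mathcal{E}|_{X_3^{(2)}},x),
\]
the middle inequality being monotonicity of the infimum over a smaller family of curves.

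Next I would bound $\varepsilon(X_3,\mathcal{E},x)$ by $\mu_{\text{min}}(\mathcal{E}|_{\Gamma_3})$, and this is the step that uses $x\in\Gamma_3$. Since $\Gamma_3=l_5\cong\mathbb{P}^1$ is an irreducible curve passing through $x$, taking $C=\Gamma_3$ in \eqref{FMcorp3.21} and then invoking Proposition~\ref{FMcor3.21} gives
\[
\varepsilon(X_3,\mathcal{E},x)\ \le\ \varepsilon(\mathcal{E}|_{\Gamma_3},x)=\mu_{\text{min}}(\mathcal{E}|_{\Gamma_3}).
\]
Combining the two displayed upper bounds produces
\[
\varepsilon(X_3,\mathcal{E},x)\ \le\ \text{min}\{\mu_{\text{min}}(\mathcal{E}|_{\Gamma_3}),\,\varepsilon(X_3^{(2)},\mathcal{E}|_{X_3^{(2)}},x)\},
\]
and together with Proposition~\ref{SC X_3 p1} this forces equality.

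I do not anticipate a serious obstacle: once Proposition~\ref{SC X_3 p1} is in hand the argument is essentially formal, and none of the hypotheses \eqref{key condn}--\eqref{key condn2} are needed for this direction, since they were already absorbed into the lower bound and into the identifications $\mu_{\text{min}}(\mathcal{E}|_{\Gamma_3^{(2)}})=\mu_8$ and $\mu_{\text{min}}(\mathcal{E}|_{\Gamma_3^{(3)}})=\mu_{10}$ of Lemmas~\ref{SC3lem1} and~\ref{SC3lem2}. The only point demanding a little care is the passage from the curve-only statement \eqref{FMcorp3.21} to the restriction inequality for the surface $X_3^{(2)}$; I would phrase this purely as monotonicity of the defining infimum over a subfamily of curves, as above, so that no strengthening of \eqref{FMcorp3.21} is ever invoked.
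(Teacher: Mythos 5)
Your proposal is correct and matches the paper's own proof in essentially every step: the paper likewise obtains the upper bound $\varepsilon(X_3,\mathcal{E},x)\leq\mu_{\text{min}}(\mathcal{E}|_{\Gamma_3})$ by restricting to $\Gamma_3=l_5$ via \eqref{FMcorp3.21} and Proposition \ref{FMcor3.21}, obtains $\varepsilon(X_3,\mathcal{E},x)\leq\varepsilon(X_3^{(2)},\mathcal{E}|_{X_3^{(2)}},x)$ by monotonicity of the defining infimum over the subfamily of curves lying in $\mathbb{P}(\mathcal{E}|_{X_3^{(2)}})$ (the paper phrases this on the projectivized bundle, you phrase it downstairs via \eqref{FMcorp3.21}, which is an equivalent formulation), and then concludes by Proposition \ref{SC X_3 p1}.
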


\begin{proof}
	Since \(x \in \Gamma_3 \, (=l_5)\) by \eqref{FMcorp3.21}, we have that 
	\begin{equation*}\label{X-max}
	\begin{split}
			\varepsilon(X_3, \mathcal{E},x) \leq \varepsilon(\Gamma_3, \mathcal{E}|_{\Gamma_3},x)= \mu_{\text{min}}(\mathcal{E}|_{\Gamma_3}).
	\end{split}
	\end{equation*}
Here the last equality follows from Proposition \ref{FMcor3.21}. On the other hand, we always have
\begin{equation}\label{always}
\varepsilon(X_3, \mathcal{E}, x)= \, \inf\limits_{\substack{C \in \mathcal{C}_{\mathcal{E}, x}}} \frac{\xi\cdot C}{{\rm mult}_{x} \pi_*C} \leq \, \inf\limits_{\substack{C \in \mathcal{C}_{\mathcal{E}, x}\\ C \subseteq \mathbb{P}(\mathcal{E}|_{X_3^{(2)}})}} \frac{\xi\cdot
	C}{{\rm mult}_{x} \pi_*C}=\varepsilon(X_3^{(2)},\mathcal{E}|_{X_3^{(2)}},x).
\end{equation}
So \(\varepsilon(X_3,\mathcal{E},x)\leq \text{min}	\{\mu_{\text{min}}(\mathcal{E}|_{\Gamma_3}),\varepsilon(X_3^{(2)},\mathcal{E}|_{X_3^{(2)}},x)\}\) and the proposition follows from Proposition \ref{SC X_3 p1}.
\end{proof}

\begin{lemma} \label{SC X3 lem}
	Suppose that \(\frac{\xi \cdot C}{m} \geq \varepsilon(X_3^{(2)},\mathcal{E}|_{X_3^{(2)}},x)\) for all curves \(C \in \mathcal{C}_{\mathcal{E}, x}\) such that \(\pi_* C \not\subset  X_3^{(2)}\), then 
	\begin{equation*}
	\varepsilon(X_3,\mathcal{E},x)=	\varepsilon(X_3^{(2)},\mathcal{E}|_{X_3^{(2)}},x).
	\end{equation*}
\end{lemma}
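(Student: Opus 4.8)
The plan is to compare the two infima defining the respective Seshadri constants by partitioning the curves in $\mathcal{C}_{\mathcal{E}, x}$ according to whether or not their image under $\pi$ is contained in $X_3^{(2)}$. One inequality is free and unconditional, while the other is where the hypothesis of the lemma is used.

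First I would record that the inequality $\varepsilon(X_3, \mathcal{E}, x) \leq \varepsilon(X_3^{(2)}, \mathcal{E}|_{X_3^{(2)}}, x)$ holds with no extra assumption: this is exactly \eqref{always}, obtained by restricting the infimum in Definition \ref{fulm} to the subfamily of curves contained in $\mathbb{P}(\mathcal{E}|_{X_3^{(2)}})$, since an infimum over a smaller set is at least as large. So it remains only to prove the reverse inequality $\varepsilon(X_3, \mathcal{E}, x) \geq \varepsilon(X_3^{(2)}, \mathcal{E}|_{X_3^{(2)}}, x)$.

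For the reverse inequality I would take an arbitrary $C \in \mathcal{C}_{\mathcal{E}, x}$, set $m = {\rm mult}_x \pi_* C$, and show $\frac{\xi \cdot C}{m} \geq \varepsilon(X_3^{(2)}, \mathcal{E}|_{X_3^{(2)}}, x)$ in each of the two cases. If $\pi_* C \not\subset X_3^{(2)}$, this bound is precisely the hypothesis of the lemma. If $\pi_* C \subset X_3^{(2)}$, then since $m > 0$ the curve $C$ is not contained in a fiber of $\pi$, so the support of $\pi_* C$ equals $\pi(C)$ and hence $C \subseteq \pi^{-1}(X_3^{(2)}) = \mathbb{P}(\mathcal{E}|_{X_3^{(2)}})$; in this case $\xi \cdot C = \xi|_{\mathbb{P}(\mathcal{E}|_{X_3^{(2)}})} \cdot C$ and the multiplicity of $\pi_* C$ at $x$ agrees whether computed on $X_3$ or on the smooth subvariety $X_3^{(2)}$, so $C$ is one of the curves contributing to the infimum defining $\varepsilon(X_3^{(2)}, \mathcal{E}|_{X_3^{(2)}}, x)$ and the bound follows by definition. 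Taking the infimum over all $C \in \mathcal{C}_{\mathcal{E}, x}$ then gives $\varepsilon(X_3, \mathcal{E}, x) \geq \varepsilon(X_3^{(2)}, \mathcal{E}|_{X_3^{(2)}}, x)$, and combining with the first inequality yields the asserted equality.

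The only point requiring care, and the mild obstacle in this otherwise formal argument, is the compatibility invoked in the second case: that intersecting against $\xi$ and taking the multiplicity of $\pi_* C$ at $x$ commute with passing to the subvariety $X_3^{(2)}$ and its preimage $\mathbb{P}(\mathcal{E}|_{X_3^{(2)}})$. This is a routine consequence of $X_3^{(2)}$ being a smooth toric subvariety (a fiber of the iterated bundle projection, as in Subsection \ref{bott}) together with the tautological bundle restricting as $\xi|_{\mathbb{P}(\mathcal{E}|_{X_3^{(2)}})} = \mathcal{O}_{\mathbb{P}(\mathcal{E}|_{X_3^{(2)}})}(1)$, and it is precisely the identification already used to justify \eqref{always} in the proof of Proposition \ref{SC X3 p2}; hence it can simply be cited rather than reproved.
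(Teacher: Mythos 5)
Your proof is correct and follows essentially the same route as the paper: the upper bound is \eqref{always}, the case $\pi_* C \subset X_3^{(2)}$ is exactly the inequality \eqref{X-min} (which you re-derive in slightly more detail instead of citing), and the case $\pi_* C \not\subset X_3^{(2)}$ is handled by the hypothesis. No gaps; the extra care you take with the restriction of $\xi$ and the intrinsic nature of ${\rm mult}_x$ is sound and matches what the paper implicitly uses.
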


\begin{proof}
	Note that by \eqref{always}, we have \(\varepsilon(X_3,\mathcal{E},x) \leq	\varepsilon(X_3^{(2)},\mathcal{E}|_{X_3^{(2)}},x)\). Recall from
	\eqref{X-min} that  for $\pi_* C\,\subset\, X_3^{(2)}$, we have \(	\frac{\xi\cdot C}{m} \, \ge\, \varepsilon(X_3^{(2)}, \mathcal{E}|_{X_3^{(2)}},x).\) Finally, using the hypothesis we have \(\varepsilon(X_3,\mathcal{E},x) \geq \varepsilon(X_3^{(2)},\mathcal{E}|_{X_3^{(2)}},x)\). Hence the lemma follows.
\end{proof}

\begin{prop} \label{SC X3 p3}
	Let \(x \in X_3\) be such that \(x \notin \Gamma_3\). Then 
	\begin{equation*}
			\varepsilon(X_3,\mathcal{E},x)=	\varepsilon(X_3^{(2)},\mathcal{E}|_{X_3^{(2)}},x).
	\end{equation*}
\end{prop}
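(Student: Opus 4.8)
The plan is to apply Lemma \ref{SC X3 lem}, which reduces the statement to a single inequality: it suffices to show that $\frac{\xi\cdot C}{m}\geq\varepsilon(X_3^{(2)},\mathcal{E}|_{X_3^{(2)}},x)$ for every $C\in\mathcal{C}_{\mathcal{E},x}$ with $\pi_*C\not\subset X_3^{(2)}$, where $m={\rm mult}_x\pi_*C$ and $C\equiv a_0C_0+\cdots+a_{12}C_{12}$ with $a_i\in\Z_{\geq0}$. Since $\mathcal{E}$ is nef, every slope $\mu_j$ is nonnegative, so writing $\xi\cdot C=a_0+\sum_{j=1}^{12}a_j\mu_j$ as in \eqref{X1} and recalling the coefficients $p_1,p_2,p_3$ from \eqref{X-coefficients}, I would first record two lower bounds coming purely from the slope hypotheses: $\xi\cdot C\geq p_2\,\mu_8$ and $\xi\cdot C\geq p_3\,\mu_{10}$. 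Each is verified by matching the terms $a_j\mu_j$ to the corresponding summands of $p_2\mu_8$ (resp. $p_3\mu_{10}$): the identities $\mu_2=\mu_8$ with $\mu_9,\mu_{12}\geq c_{1,2}\mu_8$ and $\mu_4,\mu_{11}\geq c_{2,3}\mu_8$ from \eqref{key condn}--\eqref{key condn1} account exactly for $p_2\mu_8$, while $\mu_1=\mu_3=\mu_6=\mu_{10}$ together with $\mu_7\geq c_{1,3}\mu_{10}$ and $\mu_{12}\geq(c_{1,3}+c_{1,2}c_{2,3})\mu_{10}$ from \eqref{key condn}--\eqref{key condn2} account for $p_3\mu_{10}$; all remaining terms are nonnegative.

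Next I would extract the numerical constraints on $p_1,p_2,p_3$. Since $X_3^{(2)}$ is the fiber of $\pi_3$ over $\pi_3(x)$, its class is that of $D_1$, so $\pi_*C\not\subset X_3^{(2)}$ together with B\'ezout's theorem gives $p_1=\pi_*C\cdot D_1\geq m$, using \eqref{X-pushforward2} and the duality of Proposition \ref{dual-basis}. The remaining input is the non-containment supplied by \eqref{curveGam}. Here I would split into two exhaustive cases according to the position of $x$ relative to $\Gamma_3^{(2)}$, recalling that $x\in\Gamma_3^{(3)}$ always, whereas $x\notin\Gamma_3=\Gamma_3^{(1)}$ by hypothesis.

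In Case A, $x\in\Gamma_3^{(2)}\setminus\Gamma_3^{(1)}$, so \eqref{curveGam} (with $i=2$) yields $\pi_*C\not\subset D_2'$; intersecting with $D_2'\sim_{\text{lin}}D_2-c_{1,2}D_1$ from \eqref{linequiv} gives $p_2-c_{1,2}p_1\geq0$, hence $p_2\geq c_{1,2}p_1\geq m$ (as $c_{1,2}\geq1$), so $\frac{\xi\cdot C}{m}\geq\mu_8$. Since $\Gamma_3^{(2)}$ is a curve through $x$ inside $X_3^{(2)}$ with $\mu_{\text{min}}(\mathcal{E}|_{\Gamma_3^{(2)}})=\mu_8$ by Lemma \ref{SC3lem1}, \eqref{FMcorp3.21} and Proposition \ref{FMcor3.21} give $\varepsilon(X_3^{(2)},\mathcal{E}|_{X_3^{(2)}},x)\leq\mu_8$, and the desired inequality follows. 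In Case B, $x\in\Gamma_3^{(3)}\setminus\Gamma_3^{(2)}$, so \eqref{curveGam} (with $i=3$) yields $\pi_*C\not\subset D_3'$; intersecting with $D_3'\sim_{\text{lin}}D_3-c_{1,3}D_1-c_{2,3}D_2$ gives $p_3\geq c_{1,3}p_1+c_{2,3}p_2\geq c_{1,3}p_1\geq m$, hence $\frac{\xi\cdot C}{m}\geq\mu_{10}$. As $x\in\Gamma_3^{(3)}$ and $\mu_{\text{min}}(\mathcal{E}|_{\Gamma_3^{(3)}})=\mu_{10}$ by Lemma \ref{SC3lem2}, \eqref{FMcorp3.21} gives $\varepsilon(X_3^{(2)},\mathcal{E}|_{X_3^{(2)}},x)\leq\mu_{10}$, again closing the case. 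Applying Lemma \ref{SC X3 lem} completes the argument.

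I expect the main obstacle to be the bookkeeping in the two lower bounds: correctly matching each inequality of \eqref{key condn}--\eqref{key condn2} to the right monomial of $p_2\mu_8$ and of $p_3\mu_{10}$, and confirming that the case division on $x$ is exhaustive (which it is, since membership in the $\Gamma_3^{(i)}$ is upward closed by \eqref{curveGam} and $x\in\Gamma_3^{(3)}$ always). A secondary point to check is that the two upper bounds $\varepsilon(X_3^{(2)},\mathcal{E}|_{X_3^{(2)}},x)\leq\mu_8$ and $\leq\mu_{10}$ are legitimate applications of \eqref{FMcorp3.21} carried out inside $X_3^{(2)}$, i.e. that $\Gamma_3^{(2)}$ and $\Gamma_3^{(3)}$ genuinely lie in $X_3^{(2)}$ and pass through $x$.
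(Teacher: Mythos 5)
Your proposal is correct, and its skeleton is exactly the paper's: the reduction via Lemma \ref{SC X3 lem}, the exhaustive split into $x\in\Gamma_3^{(2)}\setminus\Gamma_3$ and $x\in\Gamma_3^{(3)}\setminus\Gamma_3^{(2)}$, the B\'ezout estimate $p_1\geq m$ against the fiber class $D_1\equiv X_3^{(2)}$, the non-containment statements $\pi_*C\not\subset D_2'$ (resp. $\pi_*C\not\subset D_3'$) from \eqref{curveGam} combined with \eqref{linequiv} to get $p_2\geq c_{1,2}p_1$ (resp. $p_3\geq c_{1,3}p_1+c_{2,3}p_2$), and the slope bookkeeping $\xi\cdot C\geq p_2\,\mu_8$ (resp. $\xi\cdot C\geq p_3\,\mu_{10}$), which matches \eqref{key condn}--\eqref{key condn2} term by term just as in the paper. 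The one place you genuinely diverge is the upper bound $\varepsilon(X_3^{(2)},\mathcal{E}|_{X_3^{(2)}},x)\leq\mu_{10}$ in the second case. You get it in one line: $\Gamma_3^{(3)}=X_3^{(3)}$ is by construction a curve inside $X_3^{(2)}$ passing through $x$, so \eqref{FMcorp3.21} and Proposition \ref{FMcor3.21} give $\varepsilon(X_3^{(2)},\mathcal{E}|_{X_3^{(2)}},x)\leq\mu_{\text{min}}(\mathcal{E}|_{\Gamma_3^{(3)}})$, which equals $\mu_{10}$ by Lemma \ref{SC3lem2}. The paper instead identifies $X_3^{(2)}$ with a Hirzebruch surface via the quotient construction (including matching the torus actions so that $\mathcal{E}|_{X_3^{(2)}}$ is equivariant), computes $\mu_{\text{min}}(\mathcal{E}|_{D_1^{(2)}})=\mu_{\text{min}}(\mathcal{E}|_{D_1'^{(2)}})=\mu_{10}$ for the fiber divisors $D_1^{(2)}\equiv l_{10}$ and $D_1'^{(2)}\equiv l_6$ so as to verify hypothesis \eqref{key_H}, and then invokes Theorem \ref{SC_H} for points outside $\Gamma_2$. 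Your shortcut is legitimate---it is the same device the paper itself uses in its first case (restriction to $\Gamma_3^{(2)}$ plus Lemma \ref{SC3lem1})---and applied uniformly it makes this proposition independent of Theorem \ref{SC_H}. What the paper's longer route buys is not needed for the proposition itself: rather, the Hirzebruch identification and the verification of \eqref{key_H} on $X_3^{(2)}$ make the quantity $\varepsilon(X_3^{(2)},\mathcal{E}|_{X_3^{(2)}},x)$ explicitly computable via Theorem \ref{SC_H}, which is what Corollary \ref{hmplike} and the concluding example rely on.
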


\begin{proof}
	We consider the following cases.
	
	\noindent
	\textbf{ Case 1}: Let \(x \in \Gamma_3^{(2)}\).
	Note that $x \,\in\,\Gamma_3^{(2)} \setminus \Gamma_3$. Recall from \eqref{X-pushforward2} that $\pi_* C \,=\, p_1 l_5+p_2 l_8 +p_3 l_{10} \,\subset\, X_3$ is an irreducible 	and reduced curve such that $m\,:=\,{\rm mult}_x\pi_* C \,>\, 0$. Thus we have 	$\pi_* C \,\not\subset\, D_2'$ by \eqref{curveGam}. Now, by \eqref{linequiv}, $$D_2' \,\sim_{\text{lin}}\, D_2-c_{1,2}D_1.$$  Hence  
	$$0 \,\le\, \pi_* C\cdot D_2' \,=\, \pi_* C\cdot(D_2-c_{1,2}D_1) \,=\, p_2-c_{1,2}p_1\, .$$ So $$p_2
	\,\ge\, c_{1,2} \, p_1.$$
	From now on suppose that $\pi_* C \,\not\subset \, X_3^{(2)}$. Then arguing as in the proof of Proposition \ref{SC X_3 p1}, we get \(p_1 \geq m\). We next show that 
	\begin{equation*}
		\frac{\xi \cdot C}{m} \geq \mu_8.
	\end{equation*}
From \eqref{X1}, we get 
\begin{equation*}
	\begin{split}
	&	\frac{\xi \cdot C}{m}  = \frac{a_0 + \sum_{j=1}^{12} a_j \, \mu_j}{m}\\
		& \geq \frac{a_0 + a_1 \, \mu_1 + a_3 \, \mu_3+ a_5 \, \mu_5 + a_6 \, \mu_6 + a_7 \, \mu_7 +p_2 \, \mu_8 +a_{10} \, \mu_{10}}{m} \ (\text{using \eqref{key condn1} and \eqref{X-coefficients}})\\
		& \geq \mu_8 \ (\text{using the fact that } p_2 \geq m).
	\end{split}
\end{equation*} %(a_2 + a_8 + (a_4 + a_{11}) c_{2,3}+ (a_9 + a_{12} ) c_{1,2} ) 
Finally, from Lemma \ref{SC3lem1} note that \[\mu_8=\mu_{\text{min}}(\mathcal{E}|_{\Gamma_3^{(2)}})=\varepsilon(\Gamma_3^{(2)}, \mathcal{E}|_{\Gamma_3^{(2)}}, x) \geq \varepsilon(X_3^{(2)},\mathcal{E}|_{X_3^{(2)}},x).\]
Hence in this case the proposition follows from Lemma \ref{SC X3 lem}.

\noindent
\textbf{ Case 2}: Let \(x \notin \Gamma_3^{(2)}\). We first show that 
\begin{equation*}\label{mu_10}
	\varepsilon(X_3,\mathcal{E},x)=\mu_{10}.
\end{equation*}
Write \(x=[z_1^0:w_1^0 : z_{2}^0:w_{2}^0:z_3^0:w_3^0]\) and recall from Subsection \ref{qc} that 
\begin{equation*}
	X_3^{(2)}\,=\, \left\{[z_1^0:w_1^0 : z_{2}:w_{2}:z_3:w_3] \,\mid\, (z_i,w_i) 	\,\in\, \C^2 \setminus 0\ \text{ for }  i=2, 3 \right\},
\end{equation*}
and there is an isomorphism
\begin{equation}\label{iso5.1}
	\begin{split}
	X_3^{(2)} \stackrel{\cong } \longrightarrow X_2, \, 	[z_1^0:w_1^0 : z_{2}:w_{2}:z_3:w_3]  \longmapsto [ z_{2}:w_{2}:z_3:w_3]. 
	\end{split}
\end{equation}
Here \(X_2\) is a Hirzebruch surface with Bott number \(c_{2,3}\) and we denote the coordinates by \([ z'_{2}:w'_{2}:z'_3:w'_3]\). The action of the torus (\(\cong (\C^*)^3\)) on \(	X_3^{(2)}\) is given by
\begin{equation*}
\begin{split}
		&(t_1, t_2, t_3) \cdot [z_1^0:w_1^0 : z_{2}:w_{2}:z_3:w_3] \\
	&	=[t_1 \, z_1^0:t_1 \, w_1^0 :t_1^{-c_{1,2}} \, t_2 \, z_{2}:t_2 \, w_{2}:t_1^{-c_{1,3}} \, t_2^{-c_{2,3}} \, t_3 \, z_3:t_3 \, w_3], 
\end{split}
\end{equation*}
and the torus on \(X_2\) is identified with
\[\{(1, t_2, t_3) ~ : ~ t_2, t_3 \in \C^*\} ~ ( \cong (\C^*)^2),\]
so that the isomorphism \eqref{iso5.1} becomes isomorphism of toric varieties.

Let \(D_1^{(2)}:=\mathbb{V}(w'_2) \subset X_3^{(2)}\) and \(D_1'^{(2)}:=\mathbb{V}(z'_2) \subset X_3^{(2)}\). Note that \(D_1^{(2)}=D_2 \cap X_3^{(2)}\) and \(D_1'^{(2)}=D_2' \cap X_3^{(2)}\) and we know that \(D_1^{(2)} \equiv D_1'^{(2)}\). Also observe that (using \cite[Proposition 2.2]{Sc_bott})
\begin{equation*}
	D_1^{(2)} \equiv D_2 \cdot D_1 =l_{10} \text{ and } D_1'^{(2)} \equiv D'_2 \cdot D_1 =l_{6}. 
\end{equation*}
Now we show that 
\begin{equation*}\label{eglast}
	\mu_{\text{min}}(\mathcal{E}|_{	D_1^{(2)}})=\mu_{\text{min}}(\mathcal{E}|_{D_1'^{(2)}})=\mu_{10}.
\end{equation*}

We see that 
\begin{equation}\label{SC3_2.1}
	\begin{split}
		\mu_{\text{min}}(\mathcal{E}|_{D_1^{(2)}})=& \, \varepsilon(\mathcal{E}|_{D_1^{(2)}},x) \ (\text{by Proposition \ref{FMcor3.21})}\\
		=& \inf\limits_{\substack{C \in \mathcal{C}_{\mathcal{E}, x}\\ C \subseteq \mathbb{P}(\mathcal{E}|_{D_1^{(2)}})}} \frac{\xi|_{\mathbb{P}(\mathcal{E}|_{D_1^{(2)}})} \cdot C}{{\rm mult}_{x}\pi_*C}  \\
		=&  \inf\limits_{\substack{C \in \mathcal{C}_{\mathcal{E}, x}\\ C \subseteq \mathbb{P}(\mathcal{E}|_{D_1^{(2)}})}} \xi \cdot C \ (\text{as } \pi_*C=D_1^{(2)}).
	\end{split}
\end{equation}

Let \(C \in  \mathcal{C}_{\mathcal{E}, x}\) and write \(C \, \equiv \, a_0 C_0+ \, \ldots \, + a_{12} C_{12}\) for some nonnegative integers \(a_0, \, \ldots, \, a_{12} \). Moreover, if \( C \subseteq \mathbb{P}(\mathcal{E}|_{D_1^{(2)}})\) then we have 
\[\pi_*C=D_1^{(2)}\equiv l_{10}.\] Thus from \eqref{X-pushforward2}, we get 
\[p_1=p_2=0 \text{ and } p_3=1.\] 
This implies that,
\[a_5=a_7=a_9=a_{12}=0=a_2=a_4=a_8=a_{11}, \, a_1+a_3+a_6+a_{10}=1.\]
So \(C\) will be of the following forms
\begin{equation*}\label{SC3_8.1}
	C \equiv a_0 C_0 + C_1, \text{ or } C \equiv a_0 C_0 + C_3 \text{ or } C \equiv a_0 C_0 + C_6 \text{ or } C \equiv a_0 C_0 + C_{10} .
\end{equation*}
However, in all the cases, we have \[\xi \cdot C=a_0+\mu_{10}\] as \(\mu_1=\mu_3=\mu_6=\mu_{10}\) by \eqref{key condn}. Hence from \eqref{SC3_2.1}, we get \(\mu_{\text{min}}(\mathcal{E}|_{D_1^{(2)}})=\mu_{10}\). 

Similarly, one can see that \(\mu_{\text{min}}(\mathcal{E}|_{D_1'^{(2)}})=\mu_{10}.\)

Note that in this case we have \(x \notin \Gamma_3^{(2)} (=\Gamma_2 \subset X_2)\). Then using Theorem \ref{SC_H}, we have \begin{equation}\label{mu_10_1}
	\varepsilon(X_3^{(2)},\mathcal{E}|_{X_3^{(2)}},x)  \leq \mu_{10}. 
\end{equation}

Let \(C \in \mathcal{C}_{\mathcal{E}, x}\) be such that \(\pi_*C \not\subset X_3^{(2)}\). We show that 
\begin{equation*}
	\frac{\xi \cdot C}{m} \geq \mu_{10}.
\end{equation*}
Since \(x \in \Gamma_3^{(3)} \setminus \Gamma_3^{(2)}\), we have \(\pi_*C \not\subset D_3'\) by \eqref{curveGam}. Now, by \eqref{linequiv}, $$D_3' \,\sim_{\text{lin}}\, D_3-c_{1,3}D_1-c_{2,3} D_2.$$  Hence  
$$0 \,\le\, C\cdot D_3' \,=\, C\cdot(D_3-c_{1,3}D_1-c_{2,3} D_2) \,=\, p_3-c_{1,3}p_1-c_{2,3} p_2\, .$$ So $$p_3
\,\ge\, c_{1,3} \, p_1.$$
Since we have $\pi_* C \,\not\subset \, X_3^{(2)}$, again arguing as in the proof of Proposition \ref{SC X_3 p1}, we get \(p_1 \geq m\). 
From \eqref{X1}, we get 
\begin{equation*}
	\begin{split}
		&	\frac{\xi \cdot C}{m}  = \frac{a_0 + \sum_{j=1}^{12} a_j \, \mu_j}{m}\\
		& \geq \frac{a_0 + a_2 \, \mu_2 + a_4 \, \mu_4+ a_5 \, \mu_5 + a_8 \, \mu_8 + a_9 \, \mu_9 +p_3 \, \mu_{10}+ a_{11} \, \mu_{11} }{m} \ (\text{using \eqref{key condn2} and \eqref{X-coefficients}})\\
		& \geq \mu_{10} \ (\text{using the fact that } p_3 \geq m)\\
		& \geq \varepsilon(X_3^{(2)},\mathcal{E}|_{X_3^{(2)}},x)  \ (\text{by } \eqref{mu_10_1}).
	\end{split}
\end{equation*} %a_1+a_3+a_6+ a_{10} + a_7 c_{1,3}+ (c_{1,2}+ c_{1,2} \, c_{2,3}) a_{12} 
 Hence the proposition follows from Lemma \ref{SC X3 lem}.
\end{proof}

\begin{proof}[Proof of Theorem \ref{SC_3}] The theorem follows immediately from Proposition \ref{SC X3 p2}  and Proposition \ref{SC X3 p3}.
\end{proof}

\begin{rmk}{\rm
		Let $\mc{E}$ be an equivariant nef vector bundle of rank \(r\) on \(X_3\) such that for any invariant curve \(l\), the following holds:
		$$\mu_{\text{min}}(\mathcal{E}|_{l})= a_1 \, \mu_5 + a_2 \, \mu_8 + a_3 \, \mu_{10} \text{ whenever } l \, \equiv a_1 \, l_5 + a_2 \, l_8 + a_3 \, l_{10}, \, a_1, a_2, a_3 \in \Z_{\geq 0}.$$ 
		Then $\mc{E}$ satisfies all the hypotheses of Theorem \ref{SC_3}.}
\end{rmk}

\noindent
We derive the following corollary as an immediate application.
\begin{cor}\label{hmplike}
	Let $\mathcal{E}$ be an equivariant nef vector bundle of rank \(r\) on \(X_3\) satisfying the conditions of Theorem \ref{SC_3}.
	Then the Seshadri constants of $\mathcal{E}$ at any \(x \in X_3\) are given by 	
	$$\varepsilon(X_3,\mathcal{E},x) =\,{\rm min}\left\{\mu_{\text{min}}(\mathcal{E}|_{\Gamma_3^{(i)}}) \,\mid\, x \in \Gamma_3^{(i)}\right\}.$$
\end{cor}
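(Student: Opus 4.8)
The plan is to read the corollary off Theorem \ref{SC_3}, converting its two-case formula into the single minimum over the curves $\Gamma_3^{(i)}$ that pass through $x$. First I would assemble the dictionary between the two formulations. Since $\Gamma_3 = l_5$, Lemma \ref{SC3lem1} and Lemma \ref{SC3lem2} give
\[
\mu_{\text{min}}(\mathcal{E}|_{\Gamma_3}) = \mu_5, \qquad \mu_{\text{min}}(\mathcal{E}|_{\Gamma_3^{(2)}}) = \mu_8, \qquad \mu_{\text{min}}(\mathcal{E}|_{\Gamma_3^{(3)}}) = \mu_{10}.
\]
The nesting property \eqref{curveGam}, namely $x \in \Gamma_3^{(i)} \Rightarrow x \in \Gamma_3^{(i+1)}$, together with $\Gamma_3^{(3)} = X_3^{(3)} \ni x$, shows that the index set $\{\,i : x \in \Gamma_3^{(i)}\,\}$ is always one of $\{1,2,3\}$, $\{2,3\}$ or $\{3\}$, according as $x \in \Gamma_3$, $x \in \Gamma_3^{(2)} \setminus \Gamma_3$, or $x \notin \Gamma_3^{(2)}$. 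This is exactly the trichotomy underlying Theorem \ref{SC_3}.

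The upper bound is uniform: applying \eqref{FMcorp3.21} with $C = \Gamma_3^{(i)}$, an invariant $\mathbb{P}^1$ through $x$, and Proposition \ref{FMcor3.21}, I get $\varepsilon(X_3,\mathcal{E},x) \le \mu_{\text{min}}(\mathcal{E}|_{\Gamma_3^{(i)}})$ for every $i$ with $x \in \Gamma_3^{(i)}$, hence $\varepsilon(X_3,\mathcal{E},x) \le \min\{\mu_{\text{min}}(\mathcal{E}|_{\Gamma_3^{(i)}}) : x \in \Gamma_3^{(i)}\}$. For the lower bound I would feed the value of $\varepsilon(X_3^{(2)},\mathcal{E}|_{X_3^{(2)}},x)$ into Theorem \ref{SC_3}. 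As in the proof of Proposition \ref{SC X3 p3}, $X_3^{(2)}$ is a Hirzebruch surface carrying $\Gamma_3^{(2)}$ as its section and $\Gamma_3^{(3)}$ as its fibre class, and $\mathcal{E}|_{X_3^{(2)}}$ satisfies \eqref{key_H}, its two fibre-class curves both having minimal slope $\mu_{10}$. Proposition \ref{H2} then yields $\varepsilon(X_3^{(2)},\mathcal{E}|_{X_3^{(2)}},x) \ge \min\{\mu_{10}, \mu_{11}, \mu_8\}$, and since $\mu_{11} \ge c_{2,3}\mu_8 \ge \mu_8$ by \eqref{key condn1} (here $\mu_8 \ge 0$ and $c_{2,3} \ge 1$), this equals $\min\{\mu_8, \mu_{10}\}$. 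Combined with the upper bound, this settles the cases $x \in \Gamma_3$ (where Theorem \ref{SC_3} gives $\min\{\mu_5, \min\{\mu_8,\mu_{10}\}\} = \min\{\mu_5,\mu_8,\mu_{10}\}$) and $x \in \Gamma_3^{(2)} \setminus \Gamma_3$ (where it gives $\min\{\mu_8,\mu_{10}\}$).

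I expect the genuine difficulty to be the case $x \notin \Gamma_3^{(2)}$, where the assertion is $\varepsilon(X_3,\mathcal{E},x) = \mu_{10}$ but the general bound of Proposition \ref{H2} only gives $\varepsilon(X_3^{(2)},\ldots) \ge \min\{\mu_8,\mu_{10}\}$. One cannot upgrade this through the sharp form of Theorem \ref{SC_H}, since that would require $\mu_{11} \ge \mu_{10}$, which is not among the hypotheses. Instead I would redo the curve estimate directly, in the spirit of Proposition \ref{SC X3 p3}: for $C \in \mathcal{C}_{\mathcal{E},x}$ with $\pi_*C \subset X_3^{(2)}$ one has $p_1 = 0$, while $x \in \Gamma_3^{(3)} \setminus \Gamma_3^{(2)}$ forces $\pi_*C \not\subset D_3'$ by \eqref{curveGam}, whence $p_3 \ge c_{2,3}\,p_2$ from \eqref{linequiv}; Bézout against the fibre $\Gamma_3^{(3)}$ gives $p_2 \ge m$ (the case $\pi_*C = \Gamma_3^{(3)}$ being immediate), so $p_3 \ge m$. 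Using \eqref{key condn}, \eqref{X-coefficients} and $\mu_4, \mu_{11} \ge c_{2,3}\mu_8$ from \eqref{key condn1} to absorb the $a_4, a_{11}$ terms into $p_2\mu_8$, I obtain $\xi \cdot C \ge a_0 + p_3\mu_{10} + p_2\mu_8 \ge m\,\mu_{10}$, that is $\xi\cdot C / m \ge \mu_{10}$. Together with the estimate for $\pi_*C \not\subset X_3^{(2)}$ already supplied by Proposition \ref{SC X3 p3}, this gives $\varepsilon(X_3^{(2)},\ldots) \ge \mu_{10}$, and the matching upper bound completes the last case.
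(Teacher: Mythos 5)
You correctly saw that this corollary is \emph{not} the ``immediate application'' the paper presents it as: Theorem \ref{SC_3} reduces everything to the exact value of $\varepsilon(X_3^{(2)},\mathcal{E}|_{X_3^{(2)}},x)$, the paper only ever establishes the upper bound \eqref{mu_10_1} in the case $x\notin\Gamma_3^{(2)}$, and the sharp case of Theorem \ref{SC_H} cannot be invoked for the surface $X_3^{(2)}$ because its hypothesis $\mu_2\geq\mu_1$ is not among \eqref{key condn}--\eqref{key condn2}. Your dictionary via Lemmas \ref{SC3lem1}, \ref{SC3lem2}, the trichotomy, the uniform upper bound via \eqref{FMcorp3.21} and Proposition \ref{FMcor3.21}, and the treatment of the cases $x\in\Gamma_3^{(2)}$ are all sound, up to one imprecision: the ``$\mu_2$'' of the Hirzebruch surface $X_3^{(2)}$ is $\mu_{\text{min}}(\mathcal{E}|_{D_3\cap X_3^{(2)}})$, and $D_3\cap X_3^{(2)}$ equals the invariant curve $l_{11}$ only when $X_3^{(2)}=D_1$; in general the inequality $\mu_{\text{min}}(\mathcal{E}|_{D_3\cap X_3^{(2)}})\geq\mu_8$ must itself be derived by a Lemma-\ref{SC3lem1}-style Mori-cone decomposition rather than quoted from \eqref{key condn1}. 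That is patchable.

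The genuine gap is in your hard case $x\notin\Gamma_3^{(2)}$, and it is inherited from the paper: your absorption step uses \eqref{X-coefficients}, hence the third line of \eqref{X lin_equiv}, and that line is incorrect. The wall relation $v_2+v_5+c_{2,3}v_6=0$ gives $l_{11}\cdot D_2=1$ and $l_{11}\cdot D_3=c_{2,3}$, so by the duality of Proposition \ref{dual-basis} one has $l_4\equiv l_{11}\equiv\Gamma_3^{(2)}+c_{2,3}\,\Gamma_3^{(3)}$, not $c_{2,3}\,\Gamma_3^{(2)}$; equivalently $l_{11}=D_1\cdot D_3\equiv D_1\cdot(D_3'+c_{1,3}D_1+c_{2,3}D_2)\equiv l_8+c_{2,3}\,l_{10}$. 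With the corrected classes, $a_4$ and $a_{11}$ enter $p_3$ with coefficient $c_{2,3}$ and $p_2$ with coefficient $1$, so the inequality $\xi\cdot C\geq a_0+p_3\mu_{10}+p_2\mu_8$ requires $\mu_4,\mu_{11}\geq\mu_8+c_{2,3}\,\mu_{10}$, which \eqref{key condn1} does not supply. The failure is visible on a concrete curve: take $x\in l_{11}$ not a torus-fixed point, so that $x\notin\Gamma_3^{(2)}$ and $X_3^{(2)}=D_1\supset l_{11}$; the extremal curve $C_{11}$ over $l_{11}$ passes every checkpoint of your argument ($p_1=0$, $p_2=1\geq m=1$, $p_3=c_{2,3}\geq c_{2,3}p_2$), yet $\xi\cdot C_{11}/m=\mu_{11}$, so your claimed lower bound would prove $\mu_{11}\geq\mu_{10}$ --- exactly the condition you yourself noted is missing from the hypotheses. (Under the paper's stated class $l_{11}\equiv c_{2,3}\Gamma_3^{(2)}$ this curve would have $p_2=c_{2,3}$, $p_3=0$ and violate $p_3\geq c_{2,3}p_2$, which is how the error hides.) So the case $x\notin\Gamma_3^{(2)}$ is not closed by your argument, nor by the paper's; to close it one must either derive $\mu_{\text{min}}(\mathcal{E}|_{D_3\cap X_3^{(2)}})\geq\mu_{10}$ from the stated hypotheses, which no longer follows once \eqref{X lin_equiv} is corrected, or strengthen \eqref{key condn1} to $\mu_4,\mu_{11}\geq\mu_8+c_{2,3}\,\mu_{10}$, under which your computation (and the corollary) does go through.
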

\begin{rmk}{\rm
Using similar methods, it is possible to compute Seshadri constants for $X_n$, $n \geq 4$, but the calculations are cumbersome.
}  
\end{rmk}
\begin{rmk}\label{hmp} {\rm 
		Seshadri constants for line bundles on toric varieties at torus fixed points are first computed by Di Rocco \cite{DiRocco}. In \cite{HMP}, the authors have generalized this result for equivariant vector bundles. Seshadri constants of an equivariant nef vector bundle $\mathcal{E}$ at a torus fixed point $x$ are computed via the restriction of $\mc{E}$ to the invariant curves passing through $x$ (see \cite[Proposition 3.2]{HMP}). Thus, Corollary \ref{hmplike} can be viewed as a generalization of these results for all points on Bott towers up to stage 3. 
	}
\end{rmk}

\begin{ex}{\rm  
		Consider the vector space \(E=\C^2\) and three distinct one dimensional subspaces \(L_1, L_2\) and \(L_{4}\) in \(E \). Now define the filtrations \( \left( E, \{ E^{v_j}(i) \}_{j=1, \, \ldots, \, 6} \right) \)  as follows:
		
		$
		E^{v_j}(i) = \left\{ \begin{array}{ccc}
			
			E& i \leqslant 0 \\
			
			L_j & i =1 \\ 
			
			0 & i > 1
		\end{array} \right. 
		$
		for \(j=1, 2, 4\) 
		and 
		$
		E^{v_j}(i) = \left\{ \begin{array}{ccc}
			
			E &  i \leq 0 \\
			
			0 &  i > 0
		\end{array} \right.
		$
		for \(j=3, 5, 6\).
		
		Hence, the filtrations \( \left( E, \{ E^{v_j}(i) \}_{j=1, \ldots, 6} \right) \) correspond to a rank \(2\) equivariant indecomposable vector bundle on \(X_3\), say \(\mathcal{E}\) (see \cite[Proposition 6.1.1]{DDK}). 
		
		Let $\sigma_{i,j, k}$ denote the maximal cone generated by the primitive vectors \(v_i, v_j, v_k\) in the fan $\Delta_3$ of \(X_3\). On each maximal cone, by Klyachko's compatibility condition \eqref{KCC}, the associated characters are given by 
		\begin{equation*}
			\begin{split}
				&		\mathbf{u}(\sigma_{1,2,3})=\{(1,0,0), (0,1,0)\}, \mathbf{u}(\sigma_{1,2,6})=\{(1,0,0), (0,1,0)\}, \\
				&		\mathbf{u}(\sigma_{3, 4, 5})=\{(-1,0,0), (0,0,0)\},\mathbf{u}(\sigma_{1,5,6})=\{(1,0,0), (0,0,0)\},\\
				& \mathbf{u}(\sigma_{4,5,6})=\{(-1,0,0), (0,0,0)\},  \mathbf{u}(\sigma_{2,4,6})=\{(c_{1,2}, 1,0), (-1,0,0)\},\\
				&\mathbf{u}(\sigma_{1, 3,5})=\{(1,0,0), (0,0,0)\}, \mathbf{u}(\sigma_{2,3,4})=\{(c_{1,2}, 1,0), (-1,0,0)\}.  
			\end{split} 
		\end{equation*}
Then following Subsection \ref{subsec5}, restrictions of $\mathcal{E}$ to the invariant curves are given by
		\begin{equation*}
			\begin{split}
				&	\mathcal{E}|_{l_1}=\mathcal{O}_{\mathbb{P}^1} \oplus \mathcal{O}_{\mathbb{P}^1} , 	\mathcal{E}|_{l_2}=\mathcal{O}_{\mathbb{P}^1}\oplus \mathcal{O}_{\mathbb{P}^1} (1), \mathcal{E}|_{l_3}=\mathcal{O}_{\mathbb{P}^1} \oplus \mathcal{O}_{\mathbb{P}^1} , \\
				& 		\mathcal{E}|_{l_4}=\mathcal{O}_{\mathbb{P}^1} \oplus \mathcal{O}_{\mathbb{P}^1} (1), \mathcal{E}|_{l_5}=\mathcal{O}_{\mathbb{P}^1}(-c_{1,2}) \oplus \mathcal{O}_{\mathbb{P}^1} (2), \mathcal{E}|_{l_6}=\mathcal{O}_{\mathbb{P}^1} \oplus \mathcal{O}_{\mathbb{P}^1} , \\
					&	\mathcal{E}|_{l_7}=\mathcal{O}_{\mathbb{P}^1}(-c_{1,2}) \oplus \mathcal{O}_{\mathbb{P}^1} (2), 	\mathcal{E}|_{l_8}=\mathcal{O}_{\mathbb{P}^1} \oplus \mathcal{O}_{\mathbb{P}^1} (1), \mathcal{E}|_{l_9}=\mathcal{O}_{\mathbb{P}^1} \oplus \mathcal{O}_{\mathbb{P}^1} (2), \\
				& 		\mathcal{E}|_{l_{10}}=\mathcal{O}_{\mathbb{P}^1} \oplus \mathcal{O}_{\mathbb{P}^1} , \mathcal{E}|_{l_{11}}=\mathcal{O}_{\mathbb{P}^1} \oplus \mathcal{O}_{\mathbb{P}^1} (1), \mathcal{E}|_{l_{12}}=\mathcal{O}_{\mathbb{P}^1} \oplus \mathcal{O}_{\mathbb{P}^1} (2).
			\end{split}
		\end{equation*}
		So $\mathcal{E}$ is not nef (by Proposition \ref{HMPT2.1}). Consider \(D=a_1 D_1 + a_2 D_2 + a_3 D_3\) with \(a_1 \geq c_{1,2}, \, a_2 \geq 0\) and \(a_3 \geq a_2\). Then 
		\begin{equation*}
			\begin{split}
				& D \cdot l_1=a_3, \, D \cdot l_2=a_2, \, D \cdot l_3=a_3, \, D \cdot l_4=a_2+ c_{2,3} \, a_3, \, \\
				& D \cdot l_5=a_1, \, D \cdot l_6=a_3, \, D \cdot l_7= a_1+ c_{1,3} \, a_3, \, D \cdot l_8=a_2, \, \\
				& D \cdot l_9=a_1 + c_{1,2} \, a_2, \,D \cdot l_{10}=a_3, \, D \cdot l_{11}=a_2 + c_{2,3} \, a_3, \, \\
				& D \cdot l_{12}=a_1 + c_{1,2} \, a_2 + (c_{1,3} + c_{1,2} \, c_{2,3} ) \, a_3.
			\end{split}
		\end{equation*}
		Thus restrictions of $\mathcal{E}\otimes \mathcal{O}(D)$ to the invariant curves are given by
		\begin{equation*}\label{H10}
			\begin{split}
				&	(\mathcal{E} \otimes \mathcal{O}(D))|_{l_1}=\mathcal{O}_{\mathbb{P}^1}(a_3) \oplus \mathcal{O}_{\mathbb{P}^1}(a_3) , 	
				(\mathcal{E} \otimes \mathcal{O}(D))|_{l_2}=\mathcal{O}_{\mathbb{P}^1}(a_2) \oplus \mathcal{O}_{\mathbb{P}^1} (1+a_2), \\
				&	(\mathcal{E} \otimes \mathcal{O}(D))|_{l_3}=\mathcal{O}_{\mathbb{P}^1}(a_3) \oplus \mathcal{O}_{\mathbb{P}^1}(a_3) , \\
		 	&	(\mathcal{E} \otimes \mathcal{O}(D))|_{l_4}=\mathcal{O}_{\mathbb{P}^1}(a_2+ c_{2,3} \, a_3) \oplus \mathcal{O}_{\mathbb{P}^1} (1+a_2+ c_{2,3} \, a_3), \\
		&	(\mathcal{E} \otimes \mathcal{O}(D))|_{l_5}=\mathcal{O}_{\mathbb{P}^1}(a_1-c_{1,2}) \oplus \mathcal{O}_{\mathbb{P}^1} (2+a_1), \\
		&	(\mathcal{E} \otimes \mathcal{O}(D))|_{l_6}=\mathcal{O}_{\mathbb{P}^1}(a_3) \oplus \mathcal{O}_{\mathbb{P}^1}(a_3) , \\
		&	(\mathcal{E} \otimes \mathcal{O}(D))|_{l_7}=\mathcal{O}_{\mathbb{P}^1}(a_1+ c_{1,3} \, a_3-c_{1,2}) \oplus \mathcal{O}_{\mathbb{P}^1} (a_1+ c_{1,3} \, a_3+2), 	\\
		&	 (\mathcal{E} \otimes \mathcal{O}(D))|_{l_8}=\mathcal{O}_{\mathbb{P}^1} (a_2)\oplus \mathcal{O}_{\mathbb{P}^1} (1+a_2), \\
		&	(\mathcal{E} \otimes \mathcal{O}(D))|_{l_9}=\mathcal{O}_{\mathbb{P}^1}(a_1 + c_{1,2} \, a_2) \oplus \mathcal{O}_{\mathbb{P}^1} (2+a_1 + c_{1,2} \, a_2), \\
		&			(\mathcal{E} \otimes \mathcal{O}(D))|_{l_{10}}=\mathcal{O}_{\mathbb{P}^1}(a_3) \oplus \mathcal{O}_{\mathbb{P}^1}(a_3) , \\
		&	(\mathcal{E} \otimes \mathcal{O}(D))|_{l_{11}}=\mathcal{O}_{\mathbb{P}^1}(a_2 + c_{2,3} \, a_3) \oplus \mathcal{O}_{\mathbb{P}^1} (1+a_2 + c_{2,3} \, a_3), \\
		&	(\mathcal{E} \otimes \mathcal{O}(D))|_{l_{12}}=\mathcal{O}_{\mathbb{P}^1}(a_1 + c_{1,2} \, a_2 + (c_{1,3} + c_{1,2} \, c_{2,3} ) \, a_3) \\
		& \hspace{2.8 cm} \oplus \mathcal{O}_{\mathbb{P}^1} (2+a_1 + c_{1,2} \, a_2 + (c_{1,3} + c_{1,2} \, c_{2,3} ) \, a_3).
			\end{split}
		\end{equation*}
		This implies that \(\mathcal{E}\otimes \mathcal{O}(D)\) is nef (by Proposition \ref{HMPT2.1}). Also, note that
		\begin{equation*}
			\begin{split}
				& \mu_1=\mu_3=\mu_6=\mu_{10}=a_3; \, \mu_2=\mu_8=a_2; \, \mu_4=a_2 + c_{2,3} \, a_3, \, \mu_5=a_1-c_{1,2},\\
				& \mu_7=a_1 + c_{1,3} \, a_3-c_{1,2}, \, \mu_9=a_1+c_{1,2} \, a_2, \, \mu_{11}=a_2+ c_{2,3} \, a_3,\\
				& \mu_{12}=a_1 + c_{1,2} \, a_2 + (c_{1,3} + c_{1,2} \, c_{2,3} ) \, a_3.
			\end{split}
		\end{equation*}
		So \(\mathcal{E}\otimes \mathcal{O}(D)\) satisfies the conditions of Theorem \ref{SC_3}. Hence for \(x \in X_3\), the Seshadri constant is given by 
	
	\begin{equation*}
		\varepsilon(\mathcal{E}\otimes \mathcal{O}(D),x) =
		\begin{cases}
				\text{min}\{a_1-c_{1,2},\, a_2 \}, & {\rm if}\ x\in \Gamma_3, \\
				a_2, & {\rm if}\ x\notin \Gamma_3, \, x \in \Gamma_3^{(2)},\\
             a_3,  & {\rm if}\ x\notin \Gamma_3, \, x \notin \Gamma_3^{(2)}.
				\end{cases}
	\end{equation*} 

		Again, for a suitable choice of polarization on \(X_3\), the vector bundle \(\mathcal{E}\) becomes stable (see \cite[Proposition 6.1.5]{DDK}).  We observe that discriminant of $\mathcal{E}$ is non-zero using the same argument as in Example \ref{hirz-ex}.
		
}
\end{ex}

%%%%%%%%%%%%%%%%%%%%%%%%%%%%%%%%%%%%%%%%%%%%%%%%%%%%%%%%%%%%%%%%%%%%%%%%%

\end{document}